\newtheorem{thm}{Theorem}
\newtheorem{defin}{Definition}
\newtheorem{lemma}{Lemma}
\newtheorem{prop}{Proposition}
\newtheorem{cor}{Corollary}
\newtheorem*{thmA}{Theorem A:}
\newtheorem*{prop1}{Proposition 1'}
\newcommand{\Z}{\ensuremath{\mathbb{Z}}}
\newcommand{\C}{\ensuremath{\mathbb{C}}}
\newcommand{\QQ}{\ensuremath{\mathbb{Q}}}
\newcommand{\F}{\ensuremath{\mathcal{F}}}
\newcommand{\1}{\ensuremath{\mathbf{1}}}
\newcommand{\x}{\ensuremath{\mathbf{x}}}
\newcommand{\Q}{\ensuremath{\mathbf{\mathcal{Q}}}}
\newcommand{\y}{\ensuremath{\mathbf{y}}}
\newcommand{\s}{\ensuremath{\mathbf{s}}}
\newcommand{\T}{\ensuremath{\mathbf{t}}}
\newcommand{\bb}{\ensuremath{\mathbf{b}}}
\newcommand{\z}{\ensuremath{\mathbf{z}}}
\newcommand{\uu}{\ensuremath{\mathbf{u}}}
\newcommand{\vv}{\ensuremath{\mathbf{v}}}
\newcommand{\w}{\ensuremath{\mathbf{w}}}
\newcommand{\h}{\ensuremath{\mathbf{h}}}
\newcommand{\B}{\ensuremath{\mathbf{\mathcal{B}}}}
\newcommand{\<}{\ensuremath{\lesssim}}
\newcommand{\subs}{\ensuremath{\subseteq}}
\newcommand{\W}{\ensuremath{\mathcal{W}}}
\newcommand{\lm}{\ensuremath{\preccurlyeq}}
\newcommand{\eps}{\ensuremath{\varepsilon}}
\newcommand{\la}{\ensuremath{\lambda}}
\newcommand{\La}{\ensuremath{\Lambda}}
\newcommand{\ga}{\ensuremath{\gamma}}
\newcommand{\ff}{\ensuremath{\mathfrak{F}}}
\newcommand{\f}{\ensuremath{\mathfrak{f}}}
\newcommand{\HH}{\ensuremath{\mathfrak{H}}}
\newcommand{\II}{\ensuremath{\mathfrak{I}}}
\newcommand{\G}{\ensuremath{\mathfrak{g}}}
\newcommand{\bG}{\ensuremath{\bar{\G}}}
\newcommand{\GG}{\ensuremath{\mathfrak{G}}}
\newcommand{\PPP}{\ensuremath{\mathfrak{P}}}
\newcommand{\0}{\ensuremath{\mathbf{0}}}
\newcommand{\eq}{\begin{equation}
\newcommand{\ee}{\end{equation}}}
\newcommand{\p}{\ensuremath{\mathfrak{p}}}
\numberwithin{equation}{section}
\begin{document}
\title{Diophantine equations in the primes}
\author{Brian Cook and \'Akos Magyar}
\thanks{1991 Mathematics Subject Classification. 11D72, 11P32.\\
The second author is supported by NSERC grant 22R44824 and ERC-AdG. 321104.}

\begin{abstract}
 Let $\p=(\p_1,...,\p_r)$ be a system of $r$  polynomials with integer coefficients of degree $d$ in $n$ variables $\x=(x_1,...,x_n)$.  For a given  $r$-tuple of integers, say $\s$, a general local to global type statement is shown via classical Hardy-Littlewood type methods which provides sufficient conditions for the solubility of $\p(\x)=\s$ under the condition that each of the $x_i$'s is prime.\end{abstract}

\maketitle

%%%%%%%%%%%%%%%%%%%%%%%
\section{Background and main results}

Let $\p=(\p_1,...,\p_r)$ be a system of polynomials with integer coefficients of degree $d$ in the $n$ variables $\x=(x_1,...,x_n)$. Our primary concern is finding solutions with each coordinate prime, which we call prime solutions to the system of equations $\p(\x)=\s$, where $\s\in\Z^r$ is a fixed element. In more geometric terms, our task is to count prime points on the complex affine variety defined by this equation, which we will  denote by $V_{\p,\s}$.

If $\p$ is composed entirely of linear forms the known results may be split into a classical regime and a modern one. With $\p$ being a system of $r$ linear forms, define the rank of $\p$ to be the minimum number of nonzero coefficients in a non-trivial linear combination
\[\la_1\p_1+...+\la_{r}\p_{r},\]
and denote this quantity by $\B_1(\p)$. This quantity is of course positive if and only if the forms are linearly independent.  The classical results on the large scale distribution of prime points on  $V_{\p,\s}$ are conditional on the rank being sufficiently  large in terms of $r$ (for example, $2r+1$ follows from what is shown here). In this realm are  many well known results such as the ones due to Vinogradov \cite{Vin}, van der Corput, and more recently Balog \cite{Bal}. The modern results are mostly summed up in the work of Green and Tao \cite{GT1}, where the large scale distribution of prime points on $V_{\p,\s}$ is determined only on the condition that $\B_1(\p)$ is at least 3, a quantity independent of $r$. These results cover all scenarios that do not reduce to a binary problem. However, the most recent result is due to Zhang. He has shown, by extending the already stunning results of Goldston, Pintz, and Yildirim \cite{GPY}, that one of the equations\footnote{The number of needed equations has been reduced by a poly-math project.} $x_1-x_2=2i$, $i=1,...,35\times 10^6$ does have infinitely many prime solutions \cite{Zhang}.

The scenario for systems involving higher degree forms in certainly less clean cut \cite{Br}, and even the study of  integral points on $V_{\p,\s}$ is a non-trivial problem. General results for the large scale distribution of integral points are provided by  Birch \cite{Bi} and Schmidt \cite{Sch}, which again require the system to be large with respect to certain notions of rank (with respect to the number and degrees of the forms involved). Working within the limitations of these results, one should expect to be able to understand the large scale distribution of prime points as well. For systems of forms which are additive, for instance the single form $a_1x_1^d + ... + a_nx_n^d$, this is something that has been done and the primary result here is due to Hua \cite{Hua}. On the opposite end, if the system of forms is a  bilinear system, or even contains a large bilinear piece, one can also provide similar results, a particular instance of which is given by Liu \cite{Liu} for a quadratic form. However, there have previously been no results for general systems of nonlinear polynomials. Providing such a result is the aim of this work.

Before continuing it is worth discussing this problem from the point view of some recent results in sieve theory. Let $\Gamma\subset GL_n(\Z)$ be a finitely generated subgroup, $\mathcal{O}=\Gamma\mathbf{b}$ be the orbit for some $\mathbf{b}\in\Z^n$, and $g$ be an integral polynomial in $n$ variables. Bourgain, Gamburd, and Sarnak in \cite{BGS} initiated the study of a sieve method for finding points  $\y\in\mathcal{O}$ such that $g(\y)$ has few prime factors, provided of course there are no obvious constraints. In particular they prove for suitable groups that the almost prime points of the orbit form a Zariski dense subset, taking $g$ to be the monomial form $g(\y)=y_1\ldots y_n$.
In case if a form $\f$ is preserved by a sufficiently large group of transformations $\Gamma \subset GL_n(\Z)$, the level sets $V_{\f,\s}$ are partitioned into orbits of $\Gamma$ and the methods of \cite{BGS} show the existence of almost prime solutions. The advantage is that one does not need the largeness of the rank of the form, however, apart from quadratic forms, examples of such forms are quite rare (e.g. determinant forms), see  \cite{RS} for the study of integer solutions for such forms. Liu and Sarnak in \cite{LS} carry out this idea for indefinite quadratic forms in three variables, with $\Gamma$ being the subgroup of the special orthogonal group which preserves $\f$. In particular, they prove that one can find points on certain level sets of such quadratic forms where each coordinate has at most 30 prime factors.\\

Returning to the problem at hand, for a fixed system of polynomials $\p$ let us define for each prime $p$ the quantity
\[
\mu_p=\lim_{t\rightarrow \infty}\frac{(p^t)^r M(p^t)}{\phi^n(p^t)},
\]
provided the limit exists, where $M(p^t)$ represents the number of solutions to the equation $\p(\x)=\s$ in the multiplicative group of reduced residue classes $mod\,p^t$, denoted by $U_{p^t}^n$, and $\phi$ is Euler's totient function. A general heuristic argument suggests that we should have
\begin{eqnarray}\label{asy}
\mathcal{M}_{\p,\s}(N)&:=&\sum_{\x\in[N]^n} \La(\x) \mathbf{1}_{V_{\p,\s}}(\x)\nonumber\\
&\approx&\mu_\infty(N,\s)\prod_{p\,prime}\mu_p(\s)\,N^{n-D},
\end{eqnarray}
where $\mu_\infty(N,\s)$ is the singular integral that appears in the study of  integral points on $V_{\p,\s}$, see \cite{Bi},\cite{Sch}, $\La$ denotes the von Mangoldt function, $D=dr$,  and $\La(\x) =\La(x_1)...\La(x_n)$.

What is actually shown here is a precise result of this form for systems of polynomials of common degree provided that the system has large rank in the sense of Birch for the nonlinear forms and in the sense described above for linear forms. Let us be given a system of homogeneous polynomials $\f=(\f_1,...,\f_r)$  with integer coefficients of degree exactly $d$. Define the singular variety, over $\C^n$, associated to the forms  $\f$ to be the collection of $\x$ such that the Jacobian of $\f$ at $\x$, given by the matrix of partial derivatives
\[
Jac_{\,\f}(\x)=\left[\frac{\partial \f_{k}}{\partial x_j}(\x)\right]_{k=1,j=1}^{r,n},
\]
has rank strictly less than $r$. This collection is labeled as $V_{\f}^*$. The Birch rank $\B_d(\f)$ is defined  for $d>1$ and is given by  $codim(V_{\f}^*)$, provided that $r\neq0$. If $r=0$ then we simply assign the value $\infty$. This notion is extended to a general polynomial system $\p$ of degree $d$ by defining the rank by $\B_d(\p)=\B_d(\f)$, where $\f$ is the system of forms consisting of the highest degree homogeneous parts of the polynomials $\p$. In particular, if the a system of forms has positive Birch rank then the systems of forms $\f$ is a linearly independent system.

The main result that is shown here is the following.

%%%%%%%%%%%%%%%%%%%%%%%%%%%%%%%%%%%%%%%%%%%%%%%%%%%%%%%%%%%%%%
\newpage

\begin{thm}
For given positive integers $r$ and $d$, there exists a constant $\chi(r,d)$  such that the following holds:\\

Let $\p=\p^{(d)}$ be a given system of integral polynomials with $r$ polynomials of degree $d$ in $n$ variables, and set $D=dr$. If we have $\B_d(\p)\geq\chi(r,d)$  then for the equation $\p(\x)=\s$ we have an asymptotic of the form\[
\mathcal{M}_{\p,\s}(N)= \prod_p \mu_p(\s) \, \mu_\infty(N,\s) \, N^{n-D}+O(N^{n-D} (\log\,N)^{-1})\]
Moreover, if $\p(\x)=\s$ has a nonsingular solution in $\mathbb{U}_p$, the $p$-adic integer units, for all primes $p$, then \[ \prod_p \mu_p (\s)>0.\]
\end{thm}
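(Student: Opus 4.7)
The plan is to apply the Hardy--Littlewood circle method to the exponential sum
\[
S(\boldsymbol{\al}) = \sum_{\x \in [N]^n} \La(\x)\, e(\boldsymbol{\al}\cdot\p(\x)),
\]
so that $\mathcal{M}_{\p,\s}(N) = \int_{[0,1]^r} S(\boldsymbol{\al})\, e(-\boldsymbol{\al}\cdot\s)\, d\boldsymbol{\al}$. I would split $[0,1]^r$ into major arcs $\mathfrak{M}$, a union of narrow boxes around rationals $\mathbf{a}/q$ with $q\le Q_0=(\log N)^A$, and the complementary minor arcs $\mathfrak{m}$. On $\mathfrak{M}$, a Siegel--Walfisz type equidistribution replaces each $\La(x_i)$ by the constant $1$ on every reduced residue class modulo $q$: the inner residue sum reproduces the finite-prime factors of $\prod_p \mu_p(\s)$, while integration in the fractional variable $\boldsymbol{\be}=\boldsymbol{\al}-\mathbf{a}/q$ furnishes the singular integral $\mu_\infty(N,\s)$. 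Extending the truncated singular series from $q\le Q_0$ to all $q$ costs only the tail of $\sum_q |T(\mathbf{a}/q)|$, whose absolute convergence is guaranteed by the Birch exponential sum bounds \cite{Bi} once $\chi(r,d)$ is chosen large enough in terms of $r$ and $d$.

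The main obstacle is the minor-arc bound. For the integer-weighted sum $T(\boldsymbol{\al}):=\sum_{\x\in[N]^n} e(\boldsymbol{\al}\cdot\p(\x))$, Birch's Weyl differencing gives $\sup_{\boldsymbol{\al}\in\mathfrak{m}} |T(\boldsymbol{\al})|\ll N^{n-\de}$ for some $\de=\de(\B_d(\p))>0$. To transfer this bound to the $\La$-twisted sum $S(\boldsymbol{\al})$, I would Vaughan-decompose $\La$ into Type I and Type II bilinear pieces. The Type I contributions carry a smooth divisor-like coefficient and reduce, after partial summation, directly to Birch's estimate applied in one coordinate. The Type II contributions $\sum_{u\sim U}\sum_{v\sim V} a_u b_v\, e(\boldsymbol{\al}\cdot\p(uv,x_2,\ldots,x_n))$ with $UV\asymp N$ are attacked by Cauchy--Schwarz in $u$ followed by Weyl differencing in $v$; the resulting difference polynomials share the singular locus $V_{\p}^*$, so the hypothesis $\B_d(\p)\ge\chi(r,d)$ again yields a power saving provided $\chi$ is calibrated to offset the loss $U^{-1/2}$ in Cauchy--Schwarz. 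Logarithmic losses from the Vaughan decomposition are absorbed in the final estimate, giving $\int_{\mathfrak{m}}|S(\boldsymbol{\al})|\,d\boldsymbol{\al}\ll N^{n-D}(\log N)^{-1}$. The linear-rank assumption implicit in $\B_d$ large (carried by the lower-degree parts of $\p$) is what controls the affine-linear pieces after differencing.

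Positivity of $\prod_p\mu_p(\s)$ is handled in two steps. At each individual prime $p$, any nonsingular $p$-adic unit solution $\x_0$ of $\p(\x)=\s$ lifts by Hensel's lemma to solutions in $U_{p^t}^n$ for every $t$, so $\mu_p(\s)>0$ directly from the defining limit. For all but finitely many primes, a Lang--Weil bound applied to the variety $V_{\p,\s}\cap(\mathbb{G}_m)^n$ over $\mathbb{F}_p$ --- smooth outside a subvariety of codimension at least $\B_d(\p)$ --- yields $\mu_p(\s)=1+O_{\p,\s}(p^{-1-\eta})$ for some $\eta>0$, so the Euler product converges to a strictly positive limit. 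The hardest step is the Type II minor arc bound, where Birch's differencing must interact cleanly with the bilinear decomposition of $\La$ without spoiling the codimension-based savings.
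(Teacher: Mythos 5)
Your overall circle-method skeleton and your treatment of the major arcs and of the local factors (Hensel lifting at each prime, convergence of the Euler product via power savings in $p$) are broadly consistent with what the paper does. The genuine gap is in the minor-arc estimate, which is the heart of the matter. You propose to transfer Birch's pointwise bound for the unweighted sum $T(\boldsymbol{\alpha})$ to the $\La$-weighted sum $S(\boldsymbol{\alpha})$ by a Vaughan decomposition in one coordinate, writing $x_1=uv$ and handling Type II sums by Cauchy--Schwarz in $u$ and Weyl differencing in $v$. This fails on two counts. First, the needed saving on the minor arcs is a full power $N^{D}$ with $D=dr$ (plus a log), and a bilinear decomposition of a \emph{single} coordinate, with the von Mangoldt weights on $x_2,\dots,x_n$ left in place, can never produce more than roughly one variable's worth of cancellation; nothing in your sketch removes the $\La$-weights from the other $n-1$ variables, and without doing so you cannot run Birch's differencing across many variables. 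Second, the key claim that after substituting $x_1=uv$ ``the resulting difference polynomials share the singular locus $V_{\p}^*$'' is unjustified and in general false: $\p(uv,x_2,\dots,x_n)$ is not a system of degree-$d$ forms in the new variables $(u,v,x_2,\dots,x_n)$, and its Birch rank is not controlled by $\B_d(\p)$ (already for $\p(\x)=x_1^d+\cdots$ the term $u^dv^d$ is maximally degenerate as a form in $(u,v)$), so the hypothesis $\B_d(\p)\ge\chi(r,d)$ does not feed into the differenced bilinear sums in the way you assert. Attempting Vaughan in all $n$ coordinates simultaneously only makes this worse: one gets multilinear sums whose phases no longer have any rank structure that Birch's lemmas can see.

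The paper avoids this obstruction by a different mechanism, and it is worth understanding why its error term is only $(\log N)^{-1}$. It never seeks a power-saving pointwise bound on the full $\La$-weighted sum. Instead it isolates $K\le dr$ distinguished variables $x_1,\dots,x_K$ whose top-degree coefficient vectors are linearly independent, so that on the minor arcs the one-dimensional Hua--Vinogradov estimate for $\sum_{x\le N}\La(x)e(\beta_d x^d+\cdots+\beta_1 x)$ (Lemma 5) gives a logarithmic-power gain in those variables only; the $\La$-weights on the remaining variables are discarded by Cauchy--Schwarz, at the price of counting solutions of an auxiliary ``differenced'' system in $\y,\y',\z,\z'$. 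Making that auxiliary count acceptable is exactly what requires the decomposition of Proposition 2 and Schmidt's regularization (Proposition 1$'$ and Theorem A), which restore high rank in every degree for the coefficient forms that appear. If you want to salvage a Vaughan/Type I--II route, you would have to explain how the bilinear structure in each coordinate interacts with a rank hypothesis on the system after the substitution $x_i=u_iv_i$, and no such mechanism is currently known for general systems; this is precisely the difficulty the paper's architecture is built to circumvent.
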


Note that the singular integral $\mu_\infty(N,\s)$ is the same as in the work of Birch and Schmidt (see \cite{Sch}, Sec.9 and \cite{Bi}, Sec.6). It is positive provided that the variety $V_{\f,N^{-d}\s}$ has a nonsingular real point in the open cube $(0,1)^n$, and is bounded from below independently of $N$ if there is a nonsingular real point in the cube $(\eps,1-\eps)^n$ for some $\eps>0$. Indeed, the singular integral has the representation
\eq
\int_{V_{\f,\T}\cap[0,1]^n} d\sigma_{\f}(x),
\ee
where $\T=N^{-d}\,\s$, and $d\sigma_\f(x)$ is a positive measure on $V_{\f,\T}\backslash V_{\f}^*$ which is absolutely continuous with respect to the Euclidean surface area measure, see Birch \cite{Bi}, Sec.6.

The quantitative aspects of the constants $\chi(r,d)$ are in general extremely poor. The terms $\chi(r,1)$ may be taken to be $2r+1$. The case for quadratic forms is still somewhat reasonable,  for systems of quadratics one can show  $\chi(r,2)\leq2^{2^{Cr^2}}$ (to be compared to $r(r+1)$ for the integral analogue). However, the constants  $\chi(1,d)$ already exhibit tower type behavior in $d$ (to be compared to $d2^d$ for the integral analogue), and the situation further worsens from there.

A more detailed analysis of the singular series can be carried out it certain instances to provide more concrete statements. This is not done here, however there are some circumstances that have previously been considered. Borrowing the results in this direction from Hua and Vinogradov gives the following known results  (which are weaker than the best known for $d>1$) for diagonal forms as a corollary. Note that for a diagonal  form  $\f$ of degree $d$ in $n$ variables we have $\B_d(\f)=n$.

\begin{cor}
Theorem 1 applies to diagonal integral forms $\f(\x)=a_1x_1^d+a_2x_2^d+...+a_nx_n^d$,  $a_1a_2...a_n\neq0$, when $n\geq 3+(d-1)2^d$. In particular if $d=1$ and $a_1=a_2=a_3=1$ one recovers the well known fact that every sufficiently large odd integer  is a sum of three primes; if $d=2$ and $a_1=a_2=...=a_7=1$ then every sufficiently large integer congruent to 7 modulo 24 is a sum of seven squares of primes.
\end{cor}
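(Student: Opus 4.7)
The plan is to combine the framework of Theorem 1 with the classical Weyl-sum estimates of Hua and Vinogradov, which for diagonal forms are considerably sharper than the general Birch-type bounds embedded in the proof of Theorem 1. The paper has already recorded that $\B_d(\f) = n$ for a diagonal form with nonvanishing coefficients, so the Birch rank condition of Theorem 1 is controlled entirely by $n$. The catch is that the general constant $\chi(1,d)$ is tower-type in $d$, whereas the target is the much tamer bound $n \geq 3 + (d-1)2^d$.

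To bridge this gap, I would first replace the minor-arc exponential-sum estimates implicit in the proof of Theorem 1 with Hua's inequality
\[
\int_0^1 \left| \sum_{m \leq N} e(\al m^d) \right|^{2^d} d\al \ \ll \ N^{2^d - d + \eps},
\]
together with its prime-weighted analogue for $\sum_{p \leq N} (\log p)\, e(\al p^d)$ when $d\geq 2$, and with Vinogradov's bilinear-sieve estimate for the linear exponential sum over primes when $d=1$. Exploiting the factorization of the exponential sum attached to $\f$ into one-variable sums $S_j(\al) = \sum_{p \leq N} (\log p)\, e(\al a_j p^d)$, a standard H\"older reduction combined with the sup-norm minor-arc savings produces the required $o(N^{n-d}/\log N)$ bound on the minor arcs as soon as $n \geq 3 + (d-1)2^d$. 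The major-arc analysis proceeds unchanged from Theorem 1 and yields the main term $\mu_\infty(N,\s)\prod_p \mu_p(\s) N^{n-d}$ with the stated error.

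It remains to verify positivity of the singular series in the two explicit examples, using the nonsingular unit solvability criterion of Theorem 1. For $d=1$, $n=3$, $a_1=a_2=a_3=1$ with $\s$ odd: at $p=2$, three odd units sum to $1$ modulo $2$ as needed; at odd $p$, for any target one easily produces three unit residues summing to it, and the linear equation is automatically nonsingular. For $d=2$, $n=7$ with all $a_i=1$: squares of odd primes are $\equiv 1 \pmod{8}$, so a sum of seven such squares is $\equiv 7 \pmod{8}$, while squares of primes other than $3$ are $\equiv 1 \pmod{3}$, giving sum $\equiv 1 \pmod{3}$. These local obstructions combine via CRT to force the congruence $\s \equiv 7 \pmod{24}$, and conversely this congruence supplies nonsingular unit solutions modulo arbitrary powers of $2$ and $3$ by Hensel lifting. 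At primes $p\geq 5$, nonsingular unit solutions exist automatically by Chevalley--Warning together with Hensel.

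The hard part, as with all classical Waring--Goldbach problems, is the bookkeeping at the small primes $p=2,3$: here the sparsity of prime residues and the rigidity of $d$-th power residues modulo small prime powers conspire to enforce exactly the explicit congruences in the statement. Once this local check is dispatched, the corollary is essentially a recitation of Hua's and Vinogradov's circle-method calculations, imported into the apparatus established for Theorem 1.
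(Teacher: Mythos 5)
Your route is essentially the paper's: the paper offers no independent argument for this corollary at all — it records that $\B_d(\f)=n$ for a diagonal form with nonvanishing coefficients and then simply borrows the classical Hua--Vinogradov results (the diagonal circle-method asymptotic and the congruence analysis at small primes) as known, which is exactly what you reconstruct; you also correctly spotted that the tower-type $\chi(1,d)$ cannot be what is operating for $d\geq 2$, and your mod $8$ and mod $3$ computations for seven prime squares, and the mod $2$ check for three primes, are the intended local verifications. One step would fail as literally written, though: H\"older against Hua's inequality in the form $\int_0^1|S|^{2^d}d\alpha\ll N^{2^d-d+\eps}$, combined with only a $(\log N)^{-c}$ sup-norm saving on minor arcs defined by logarithmic-size denominators, cannot yield an $O(N^{n-d}(\log N)^{-1})$ bound, because $N^{\eps}$ swamps every power of $\log N$; one needs the logarithm-power refinement of the mean value estimate for the prime-weighted sum (this is precisely what Hua's book proves and what the paper's citation supplies), or else wider major arcs with a power-saving minor-arc bound. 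A smaller quibble: Chevalley--Warning is not really the right tool for unit solutions of the inhomogeneous congruence $x_1^2+\cdots+x_7^2\equiv s\ (\mathrm{mod}\ p)$, $p\geq 5$, though their existence and nonsingularity are elementary with seven variables.
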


 Liu provides a general scenario to guarantee that the singular series is positive for a single quadratic form. A quadratic form $\Q$ has the representation $\langle \x,A\x\rangle$ for some symmetric integral $n\times n$ matrix $A$. The Jacobian of the form is $2A\x$, giving that the singular variety $V^*_Q$ is the null space of $A$,  and so $\B(\Q)=rank(A)$. A general condition for $\Q$ to be well behaved with respect to each prime modulus is that $p^{n-2}$ does not divide $\det(A)$ for any prime $p$, as then a simple consideration of eigenvalues shows that the matrix $A$ has rank at least three when viewed over the finite field with $p$ elements.

\begin{cor}
Let $\Q(\x)=\langle \x,A\x\rangle$ be an indefinite integral quadratic form in $n$ variables with $rank(A)\geq\chi(2,1)$ and $p^{n-2}\nmid \det(A)$ for any prime $p$. Then $\Q(\x)=0$ has a solution in $\mathbb{U}_p$ for all $p$ if and only if $\Q(\x)=0$ has a solution with $x_i$ prime for each $i$.
\end{cor}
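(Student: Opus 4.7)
The plan is to derive Corollary 2 as an application of Theorem 1 to the single form $\p=\Q$, so that $r=1$, $d=2$, $D=2$, and the Birch rank reduces to $\B_2(\Q)=\mathrm{rank}(A)$, which by hypothesis exceeds the threshold of Theorem 1. The nontrivial direction is that $\mathbb{U}_p$-solubility for every $p$ forces prime zeros to exist: by Theorem 1 it suffices to check that $\mu_\infty(N,0)$ is bounded below and that each local factor $\mu_p(0)$ is positive, for then $\mathcal{M}_{\Q,0}(N)\sim \mu_\infty\prod_p\mu_p\cdot N^{n-2}\to\infty$ and prime solutions abound. Positivity of $\mu_\infty(N,0)$ follows from indefiniteness together with the rank hypothesis: an indefinite quadratic form of sufficiently high rank changes sign on $(\eps,1-\eps)^n$ for some $\eps>0$, so the real quadric $\{Q=0\}$ meets this cube in a smooth $(n-1)$-dimensional hypersurface away from $V_\Q^*$, and Birch's representation of $\mu_\infty$ as $\int_{V_{\Q,0}\cap[0,1]^n}d\sigma_\Q$ is then positive and uniform in $N$ since $\s=0$.

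The main work is to show $\mu_p(0)>0$ for every prime $p$, which amounts to upgrading the assumed $\mathbb{U}_p$-solution $\x_0$ to a \emph{nonsingular} one. By Smith normal form the hypothesis $p^{n-2}\nmid \det(A)$ is equivalent to $\mathrm{rank}(A\bmod p)\geq 3$, so the singular variety $V_\Q^{*}=\ker(A)$ has codimension at least three in $\mathbb{F}_p^n$, and in particular $|V_\Q^{*}(\mathbb{F}_p)|\leq p^{n-3}$. For primes $p$ sufficiently large in terms of $n$, a Lang--Weil count gives $|V_{\Q,0}(\mathbb{F}_p)|=p^{n-1}+O(p^{n-3/2})$; subtracting the singular points and the $n$ coordinate hyperplanes (each contributing $\lesssim p^{n-2}$) leaves abundant smooth $\mathbb{F}_p$-zeros with every coordinate in $\mathbb{F}_p^{\times}$, which Hensel's lemma lifts to nonsingular $\mathbb{U}_p$-zeros. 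For the finitely many small primes I would argue directly from $\x_0$: if $A\x_0\not\equiv 0\pmod p$ there is nothing to do; otherwise pick $\vv\notin\ker(A\bmod p)$ with $Q(\vv)\not\equiv 0\pmod p$, set $t_0=-2\langle A\vv,\x_0\rangle/Q(\vv)\in\mathbb{Z}_p$, and observe that $Q(\x_0+t_0\vv)=t_0\bigl(t_0 Q(\vv)+2\langle A\vv,\x_0\rangle\bigr)=0$ while $A(\x_0+t_0\vv)=t_0 A\vv\not\equiv 0\pmod p$. The freedom to choose $\vv$ in a subspace of codimension at most $n-3$ affords enough room to also keep every coordinate of $\x_0+t_0\vv$ in $\mathbb{F}_p^{\times}$, after which Hensel concludes.

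For the reverse implication, any prime solution $\x_0$ is automatically a $\mathbb{Z}_p$-point on $V_{\Q,0}$; if no coordinate of $\x_0$ equals $p$ then $\x_0\in\mathbb{U}_p^n$ already, and otherwise the same line perturbation inside $V_{\Q,0}(\mathbb{Z}_p)$ produces a $\mathbb{U}_p$-solution. The obstacle I expect to be hardest is the small-prime case of the local step: Lang--Weil is vacuous when $p$ is small relative to $n$, so the nonsingular upgrade must be carried out by the explicit line perturbation, and one must verify that the hypothesis $\mathrm{rank}(A\bmod p)\geq 3$ always affords simultaneous avoidance of $\ker(A)$, of the quadric $\{Q(\vv)=0\}$, and of the $n$ coordinate hyperplanes. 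The condition $p^{n-2}\nmid\det(A)$ is tailored precisely for this.
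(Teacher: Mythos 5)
Your overall route is the one the paper intends: specialize Theorem 1 to $r=1$, $d=2$, note $\B_2(\Q)=\mathrm{rank}(A)$, deduce $\mathrm{rank}(A\bmod p)\geq 3$ from $p^{n-2}\nmid\det A$, and reduce the whole corollary to positivity of the local factors and of the singular integral. (The paper does not write this out; it delegates the local analysis to Liu's results for a single quadratic form.) The difference is that you attempt the local step yourself, and that is where your argument breaks. You are in the case $A\x_0\equiv 0\pmod p$. If $A\x_0=0$ exactly, then $\langle A\vv,\x_0\rangle=0$, so your $t_0=0$ and the ``perturbed'' point is $\x_0$ itself, still singular. If $A\x_0\neq 0$ but $A\x_0\equiv 0\pmod p$, then $\langle A\vv,\x_0\rangle\equiv 0\pmod p$, hence $t_0\equiv 0\pmod p$ and $A(\x_0+t_0\vv)=A\x_0+t_0A\vv\equiv 0\pmod p$: the identity $A(\x_0+t_0\vv)=t_0A\vv$ you invoke presupposes $A\x_0=0$, and in neither subcase do you get $A(\x_0+t_0\vv)\not\equiv 0\pmod p$. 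To change the singularity status you must leave the residue class of $\x_0$, i.e.\ take $t$ a unit and solve $2t\langle A\x_0,\vv\rangle+t^2\Q(\vv)\equiv 0$ to high $p$-power accuracy while keeping all coordinates units and controlling the valuation of $A(\x_0+t\vv)$; this is precisely the content of Liu's local lemma, and it needs separate care at $p=2$, where the gradient $2A\x$ vanishes identically mod $2$, so ``nonsingular mod $p$'' is not even the right target and one must work with nonsingular $p$-adic points (solutions modulo $8$, say). Your Lang--Weil count only covers large $p$ and does not repair this; the same broken perturbation is also what you lean on for the converse direction at the finitely many $p$ equal to a coordinate.

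A second genuine gap is the archimedean step. It is not true that an indefinite form of high rank changes sign on $(\eps,1-\eps)^n$: for example $2x_1x_2+x_3^2+\cdots+x_n^2$ is indefinite of full rank but strictly positive on the closed positive cube away from the origin, so $\mu_\infty(N,\0)$ vanishes for it. Positivity of the singular integral requires a nonsingular real zero with all coordinates positive (or an interpretation of ``prime coordinates'' allowing negatives of primes), and indefiniteness alone does not supply it; your justification of $\mu_\infty>0$ is therefore false as stated and needs to be replaced by an argument (or hypothesis) about zeros in the positive orthant. Minor point: $p^{n-2}\nmid\det A$ implies $\mathrm{rank}(A\bmod p)\geq 3$ by Smith normal form, but it is not equivalent to it (consider $\mathrm{diag}(p^{n-2},1,\ldots,1)$); you only use the correct direction, so this is harmless.
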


The value of $\chi(2,1)$ can be worked out in directly with what is done in this paper with minimal difficulty, and previously a value of 34 has been provided by the first author.  Applying the results of Liu, as mentioned in the following section, is however more efficient and  gives a value of 21.
For the related problem of finding integer solutions to a single quadratic equation $\Q(\x)=0$ in a set of positive upper density $A\subs\Z$,  Keil in \cite{Keil} has recently obtained the rank condition $rank(\Q)\geq 17$. The analogous statement for the integers is the famous Hasse-Minkowski Theorem which requires  no conditions on the rank or the determinant, and of course only looks at solubility in $\mathbb{Z}_p$ and not $\mathbb{U}_p$.

%%%%%%%%%%%%%%%%%

\bigskip
\section{Overview}

The primary technique used in the proof of Theorem 1 is the circle method, and the argument is an adaptation of the following mean value approach. If a single integral form $\ff$ of degree $d$ in $n$ variables takes the shape
\eq\label{2.0}
\ff(\x)=x_1^d+\ff_1(\y)+\ff_2(\z),
\ee
where $\x=(x_1,\y,\z)$, then we have the representation\[
\mathcal{M}_{\ff,0}=\int_{0}^1\left(\sum_{x_1\in [N]}\La(x_1)e(\alpha x_1^d)\right)\left(\sum_{\y\in [N]^m}\La(\y)e(\alpha \ff_1(\y))\right)\left(\sum_{\z\in [N]^{n-1-m}}\La(\z)e(\alpha \ff_2(\z))\right)d\alpha\]
\[
:=\int_{0}^1S_0(\alpha)S_1(\alpha)S_2(\alpha)d\alpha.\]
An application of the Cauchy-Schwarz inequality then gives
\[
\mathcal{M}_{\ff,0}^2\leq ||S_0||_\infty^2 ||S_1||_2^2||S_2||^2_2\leq ||S_0||^2_\infty(\log\,N)^{2n-2}Y(N)Z(N),
\]
where $Y(N)$ is the number of solutions to the equation $\ff_1(\y)=\ff_1(\y')$ with $\y,\y'\in[N]^m$ and
$Z(N)$ is the number of solutions to the equation $\ff_2(\z)=\ff_2(\z')$ with $\z,\z'\in[N]^{n-1-m}.$ If $\ff_1$ and $\ff_2$ are assumed to have large rank, then $Y(N)Z(N)=O(N^{2m-d})O(N^{2(n-1-m)-d})=O(N^{2n-2-2d}))$.  More generally, for any measurable subset $\mathfrak{u}\subset[0,1]$ we have
\eq\label{1}
\int_{\mathfrak{u}}S_0(\alpha)S_1(\alpha)S_2(\alpha)d\alpha=O(||S_0||_{\infty(\mathfrak{u})}(\log \,N)^{n-1}N^{n-1-d}),\ee
where $||S_0||_{\infty(\mathfrak{u})}$ denotes the supremum of $|S_0(\alpha)|$ for $\alpha\in\mathfrak{u}$.

Then a partition into so-called major arcs $\mathfrak{M}(C)$ and minor arcs $\mathfrak{m}(C)$, depending on a parameter $C$ (see Sec.5 for the precise definitions), becomes useful due to the following fundamental estimate of Hua and Vinogradov

\begin{lemma}\cite{Hua}
Given $c>0$, there exists a $C$ such that $||S_0||_{\infty(\mathfrak{m}(C))}\leq N(\log\,N)^{-c}.$
\end{lemma}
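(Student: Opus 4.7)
The plan is to adapt the classical Vinogradov--Vaughan approach: use Vaughan's identity to decompose $S_0(\alpha)$ into bilinear sums, then bound each piece via Weyl's inequality, exploiting that on $\mathfrak{m}(C)$ every $\alpha$ admits a Dirichlet approximation $|\alpha - a/q| \leq 1/q^2$ with $(\log N)^{C} \leq q \leq N^d (\log N)^{-C}$. Applying Vaughan's identity at parameters $U = V = N^{1/3}$ rewrites
\[
S_0(\alpha) = \Sigma_1 - \Sigma_2 - \Sigma_3 - \Sigma_4 + O(N^{1/3}),
\]
where each $\Sigma_j$ is a bilinear expression $\sum_m a_m \sum_n b_n \, e(\alpha (mn)^d)$ with divisor-bounded coefficients. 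After dyadic decomposition, each piece is either a Type I sum (one variable is trivial/smooth, ranging up to some $N/M$) or a Type II sum (both variables confined to dyadic intervals $M \leq m < 2M$, $N/(2M) \leq n < N/M$ with $M$ of order $N^{1/3}$ to $N^{2/3}$).

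Type I sums are handled by swapping the order of summation and recognizing the inner sum $\sum_{n \leq N/m} e(\alpha m^d n^d)$ as a pure Weyl sum at phase $\alpha m^d$. Weyl's inequality for degree-$d$ polynomials then yields
\[
\Bigl|\sum_{n \leq N/m} e(\alpha m^d n^d)\Bigr| \ll (N/m)^{1+\varepsilon}\bigl(q'^{-1} + (m/N) + q'(m/N)^d\bigr)^{2^{1-d}},
\]
where $q'$ is the Dirichlet denominator of $\alpha m^d$; a standard divisor-sum argument using $q \geq (\log N)^{C}$ shows $q' \geq (\log N)^{C/2}$ outside a negligible set of $m$. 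Type II sums are handled by Cauchy--Schwarz in the variable $m$, then expanding the resulting square to obtain an inner sum $\sum_m e(\alpha m^d (n_1^d - n_2^d))$. The diagonal $n_1 = n_2$ contributes only $O(N (\log N)^{O(1)})$ after taking the square root, and the off-diagonal contribution is again controlled by applying Weyl's inequality to the $m$-sum, using that $\alpha(n_1^d - n_2^d)$ has large Dirichlet denominator for generic $(n_1, n_2)$.

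Combining the two estimates and optimizing the dyadic parameters, the power savings from Weyl's inequality dominate the $(\log N)^{O(1)}$ losses coming from Cauchy--Schwarz and the divisor bounds, producing $|S_0(\alpha)| \ll N (\log N)^{-c}$ provided $C = C(c,d)$ is chosen sufficiently large. The principal technical obstacle is the exceptional-set analysis: in both the Type I and Type II cases one must show that the set of $m$ (respectively pairs $(n_1, n_2)$) for which the shifted phase $\alpha m^d$ (respectively $\alpha (n_1^d - n_2^d)$) admits a small-denominator Dirichlet approximation is sparse enough to be absorbed into the error term. This is handled by a standard estimate relating the denominator of the shifted rational back to $q$ via the divisor function, but the bookkeeping — particularly balancing the polynomial losses in $\varepsilon$ against the logarithmic saving from Weyl — must be done carefully to achieve an arbitrary power $(\log N)^{-c}$ rather than merely a polynomial saving $N^{-\delta}$.
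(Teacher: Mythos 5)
The paper does not actually prove this lemma: it is the classical Hua--Vinogradov estimate, and Section 5 simply refers to Hua's book (Ch.~10, \S 5, Lemma 10.8), whose proof runs along the same general lines you propose (a bilinear decomposition of $\Lambda$ --- Vinogradov's identity in Hua's treatment, Vaughan's in yours --- followed by Type I/Type II estimates). So your skeleton is the standard one. However, as written there is a genuine gap at the crucial quantitative step: you invoke Weyl's inequality in its usual form $P^{1+\varepsilon}\bigl(q'^{-1}+P^{-1}+q'P^{-d}\bigr)^{2^{1-d}}$ and assert that ``the power savings from Weyl's inequality dominate the $(\log N)^{O(1)}$ losses.'' On $\mathfrak{m}(C)$ the Dirichlet denominator is only guaranteed to satisfy $(\log N)^{C}\le q\le N^{d}(\log N)^{-C}$; in the edge regimes $q\asymp(\log N)^{O(1)}$ (and symmetrically $q$ within a log power of $N^{d}$) Weyl's inequality saves only a power of $\log N$, and the $P^{\varepsilon}$ loss (equivalently, the worst-case divisor bound hidden in it) makes the estimate trivial --- no choice of parameters can ``balance a polynomial loss against a logarithmic saving,'' which is exactly the tension you flag in your last paragraph but do not resolve. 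These edge regimes are the entire content of the lemma; for $q$ a fixed power of $N$ the power-saving estimate is the easier, classical one. To close the gap one must replace off-the-shelf Weyl by estimates with only logarithmic losses: either the Hua/Vinogradov explicit-log lemmas for sums of the type $\sum_{h_1,\ldots,h_{d-1}}\min\bigl(P,\|\alpha h_1\cdots h_{d-1}\|^{-1}\bigr)$, where divisor functions are handled on average, or a geometry-of-numbers count in the differenced variables --- precisely what this paper does in Claim 1 of Section 5 via Birch's Lemma 2.3, concluding that on log-type minor arcs every near-integral product must have a vanishing factor.

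Two smaller points. In the Type II estimate the diagonal as you state it, $O(N(\log N)^{O(1)})$ ``after taking the square root,'' would already exceed the target $N(\log N)^{-c}$; the correct accounting gives a diagonal contribution $\ll\bigl(\sum_m|a_m|^2\cdot\sum_m\sum_n|b_n|^2\bigr)^{1/2}\ll\sqrt{MN}\,(\log N)^{O(1)}\ll N^{5/6+o(1)}$, where the Type II restriction $N^{1/3}\le M\le N^{2/3}$ is what is actually used. And in the exceptional-set analysis for the shifted phase $\alpha m^{d}$ you need not only a lower bound on the reduced denominator outside a sparse set of $m$, but also an upper bound relative to $(N/m)^{d}$: since $q$ may be as large as $N^{d}(\log N)^{-C}$, the inherited approximation $am^{d}/q$ can be too coarse to feed into Weyl, so one must take fresh Dirichlet approximations and count, with explicit logarithmic control, the $m$ for which their denominators are small.
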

\noindent This, together with equation ~\ref{1}, in turn gives the bound
\eq
\int_{\mathfrak{m}(C)}S_0(\alpha)S_1(\alpha)S_2(\alpha)d\alpha=O((\log \,N)^{-1}N^{n-d}),
\ee
and one is left with the task of approximating the integral over the major arcs
\[
\int_{\mathfrak{M}(C)}\sum_{\x\in[N]^n}e(\alpha \ff(\x))d\alpha.\]
Without going into the details here, let us remark that the major arcs consist of very small intervals (or boxes) centered at rational points with small denominators, and the integral essentially depends only on the distribution of the values of the polynomials $\ff(x)$ in small residue classes. Then the approximation can be done via standard methods in this area, similarly as in the diagonal case \cite{Hua}.\\

Now let us look at the case of a general form $\ff$ of degree $2$. If we introduce a splitting of the variables $\x=(x_1,\y,\z)$, we induce a decomposition of the shape
\[
\ff(\x)=ax_1^2+\G^{(1)}(\y,\z)x_1^{}+\ff_1(\y)+\ff_2(\z)+\G^{(2)}(\y,\z)\]
for a form $\G^{(2)}$ which is bilinear in $\y$ and $\z$, and a linear form $\G^{(1)}$. There are two possible approaches to adapting the above argument to this case.

The first involves a dichotomized argument based on the rank of $\G^{(2)}$. If we have that $\G^{(2)}$ has large rank, one can obtain good bounds on the exponential sum
\[
\sum_{\y\in[N]^m}\sum_{\z\in[N]^{n-1-m}}\La(\y)\La(\z)e(\alpha(\ff_1(\y)+\ff_2(\z)+\G^{(2)}(\y,\z)+x_1\G^{(1)}(\y,\z))))
\]
by simply removing the contribution of the von Mangoldt function with two applications of the Cauchy-Schwarz inequality. During this process the $x_1$ variable differences out, and the $x_1$ summation may be treated trivially.  In this case the methods of Birch \cite{Bi} are applicable (and the rank bounds are comparable, see \cite{Liu}).  If $\G^{(2)}$ has small rank, then it must be the case that $\ff_1$ and $\ff_2$  each have large rank for appropriately chosen splitting of the variables. Write $\G^{(2)}(\y,\z)=\langle\y,B\z\rangle$ for an appropriately sized matrix $B$ whose rank is small, and split $\G^{(1)}(\y,\z)=\l_1(\y)+\l_2(\z)$. The above argument can then be run on the intersection of the level sets of $\l_1(\y),\,\l_2(\z)$, and $B\z$, as the form $\ff(\x)$ takes the diagonal shape given in \eqref{2.0}, and both $\ff_1$ and $\ff_2$ have large rank on this small co-dimensional affine linear space. On such an intersection we get an extra power gain, which is equal to its codimension which then compensates for the loss of originally applying the Cauchy-Schwarz inequality on each level set. Thus summing the estimates over all such level sets gives an appropriate bound.

The second approach, to be fair, is simply a streamlined version of the first which removes the need for  a dichotomized approach, and this is the one we shall follow.  The main requirement here is an appropriate decomposition of $\ff$ in the form
\[
\ff(\x)=ax_1^2+\G^{(1)}(\y,\z)x_1^{}+\ff_1(\y)+\ff_2(\z)+\G^{(2)}(\y,\z)
\]
such that the rank of $\ff_1+\G^{(2)}$ is sufficiently large, the number of variables of composing $\y$, say $m$, is controlled, and the rank of $\ff_2$ is large with respect to $m$. As before, we wish to fix $\l_1(\y),\,\l_2(\z)$, and $B\z$. The difference is that we have no assumption on the rank of $B$. However, by controlling the value of $m$, we have a way to control the number of linear equations in $\z$. Running the argument as before and summing over the level sets reduces the minor arcs estimates to providing an appropriate bound for the number of solutions to the system
\begin{eqnarray}\label{syst}
\ff_1(\y)+\G^{(2)}(\y,\z)&=&\ff_1(\y')+\G^{(2)}(\y',\z)    \nonumber \\
\l_1(\y)&=&\l_1(\y')    \, \, \, \,        \nonumber \\
\ff_2(\z)&=&\ff_2(\z')       \\
\l_2(\z)&=&\l_2(\z') \,\, \,      \nonumber \\
B\z&=&B\z'  \,\,\,\nonumber
\end{eqnarray}
with $\y,\y'\in[N]^{m}$ and $\z,\z'\in [N]^{n-1-m}$. This is achieved by the rank assumptions of $\ff_1+\G^{(2)}$ and  $\ff_2$ in the original decomposition.

The strategy for forms of higher degree starts by a similar decomposition of the form
\[
\ff(\x)=ax_1^d+\G^{(1)}(\y,\z)x_1^{d-1}+...+\G^{(d-1)}(\y,\z)x_1+\ff_1(\y)+\ff_2(\z)+\G^{(d)}(\y,\z),
\]
where the $\G^{(i)}$ are forms of degree $i$, and $\ff_1(\y),\,\ff_2(\z),\,\G^{(d)}$ are forms of degree $d$.
Again we require that the rank of $\ff_1+\G^{(d)}$ is large with respect to $\G^{(i)}$ for each $i<d$, the number of variables $m$ composing $\y$ is small, and  the rank of $\ff_2$ is large with respect to $m$. That such a decomposition is possible is the subject of Section 4. Then we view each form $\G^{(i)}$, $1\leq i \leq d$, as a sum of forms in $\y$ with coefficients that are forms in $\z$, and the number of these coefficients is bounded in terms of $m$ and $d$. On each of the level sets of this new system of forms in $\z$ we have a system of forms in $\y$, the number of which is bounded in terms of $d$. Now passing to the further level of sets of these forms $\y$ provides a place to carry out the simple Cauchy-Schwarz argument at the beginning of this section. Summing back over the level sets then provides a system analogous to the one above.

The only problem with this so far is that the system we end up with contains at least a portion of each form $\G^{(i)}(\y,\z)$ for $i=1,...,d$, and we have no control on the rank of these forms at all and therefore have no way of dealing with the terminal system. The solution to this problem is found in the work of Schmidt \cite{Sch}. His results provide a way of partitioning the level sets of a form by the  level sets of a system of forms that does have high rank in each degree. Section 3 is dedicated to this. Working with this more regular system as opposed to the $\G^{(i)}$'s does provide a manageable terminal system, and allows for a bound on the minor arc integral.

Extending this method to systems of forms is relatively straightforward at this point, and is of course carried out below. The major technical difficulty here is the need to isolate larger number of suitable variables $x_1,...,x_{K}$ to get the logarithmic gain on the minor arcs, as opposed to a randomly chosen single variable $x_1$.

\bigskip

\subsection{Outline and Notation}
The outline for the rest of the paper is as follows. Sections 3 and 4 are as described above. The completion of the bound for the  integral over the minor arcs is going to be carried out in Section 5. The major arcs are dealt with in Section 6, where the asymptotic formula is shown. Section 7 is dedicated to the proof of Theorem 1. The final section concludes the work with a few further remarks.

\bigskip
\noindent\textbf{Remarks on notation}
The symbols $\mathbb{Z}$, $\mathbb{Q}$, $\mathbb{R}$, and $\mathbb{C}$ denote the integers, the rational numbers, the real numbers, and the complex numbers, respectively. The $r$-dimensional flat torus $\mathbb{R}^r\slash \mathbb{Z}^r$ is denoted by $\mathbb{T}^r$. The $p$-adic integers are denoted by $\mathbb{Z}_p$, and the units of $\Z_p$ are denoted by $\mathbb{U}_p$. The symbol $Z_N$ represents shorthand for the groups  $\mathbb{Z}\slash N\Z$. Also, the shorthand for the multiplicative group reduced residue classes $Z_N^*$ is $U_N$.

For a given measurable set $X\subseteq\mathbb{T}^r$ we shall use the notation $||f||_{p(X)}$ to denote the $L^p$ norm of the function $\mathbf{1}_Xf$  with the normalized Lebesgue measure on the $r$-dimensional flat torus. If $X$ is omitted it is assumed that $X=\mathbb{T}^r$. Here, and in general,  $\mathbf{1}_X$ denotes a characteristic function for  $X$ in a specified ambient space, and, on occasion, the set $X$ is replaced by a conditional statement which defines it.

The Landau $o$ and $O$ notation is used throughout the work. The notation $f\< g$ is sometimes used to replace $f=O(g)$. The implied constants are independent of $N$ and $\s$, but may depend on all other parameters such as $d$, $r$, $n$, and $\p$.

\bigskip
\section{A regularity lemma}
In \cite{Sch}, Schmidt provides an alternative definition of rank for a form. For a single form $\ff$ of degree at least 2 defined over a field $k$, define the \textit{Schmidt rank} $h_k(\ff)$ to be the minimum value of $l$ such that there exists a decomposition
\[
\ff=\sum_{i=1}^lU_iV_i,
\]
where $U_i$ and $V_i$ are forms defined over $k$ of degree at least one. For a system $\f^{(d)}=(\f_{1}^{(d)},...,\f_{r_d}^{(d)})$ of forms of degree $d$ we define  $h_k(\f)$ to be
\[
\min\{h_k(\lambda_1\f^{(d)}_1+...+\lambda_{r_d}\f^{(d)}_{r_d})\,:\, \lambda_i\neq0 \, for \, some \, i\}.\]

The following basic properties of the Schmidt rank will be used later.\\

\begin{itemize}
\item If $\f$ is defined over a field $k$, and $k'$ is an extension of $k$, then $h_{k'}(\f)\leq h_k(\f)$\\

\item The Schmidt rank is invariant under invertible linear transformations of $k$, i.e. $h_k(\f\circ A)=h_k(\f)$ for $A\in GL_n(k)$.\\

\item If $\f'(x_2,...,x_n)=\f(0,x_2,...,x_n)$, then $h_k(\ff')\geq h_k(\ff)-1$. \\
\end{itemize}
The first two are clear from the definition, and the third simply follows from the fact that $\f(\x)-\f'(\x)$ is of the form $x_1\G(\x)$ for some $d-1$ degree system of forms $\G$. Also, the second and third imply that the rank cannot drop on a codimension $j$ subspace of $k^n$ by more than $j$.

As observed by Schmidt (see \cite{Sch}, Lemma 16.1), the Birch rank $\B_d(\ff)$ and the complex Schmidt rank $h_\C(\ff)$ are essentially equivalent for a form of degree $d$, each being bounded  by a constant times the  other. For example, if $\Q(\x)=\langle \x,A\x\rangle$ is a quadratic form, then $\B(\Q)$ is $rank(A)$ (as pointed out in the opening section) and $h_\mathbb{C}(\Q)=l$ is the smallest integer greater than or equal to $rank(A)/2$ (which follows from the fact that $\Q$ is  equivalent over $\mathbb{C}$ to the form $x_1^2+...+x_l^2$ and $x^2_1+x_2^2$ factors over $\mathbb{C}$) .   Of course the same phenomenon is true for systems as well. The rational Schmidt rank $h_\mathbb{Q}$ on the other hand is not equivalent to the Birch rank and we need the following result which is a weakened version of a central result in \cite{Sch}.
A definition is required.

\begin{defin}
Let $\p=(\p^{(d)},...,\p^{(1)})$ be a graded system of polynomials with rational coefficients, meaning $\p^{(i)}$ is the subfamily of $\p$ of polynomials of degree precisely $i$.
Assume that $\p^{(i)}$ consists of $r_i$ polynomials for each $1\leq i \leq d$, and set  $D=\sum_i ir_i$.\\ The system $\p$ is said to be regular if $|V_{\p,\mathbf{0}}\cap [N]^n|=O(N^{n-D})$, as $N\to\infty$.
\end{defin}

\begin{thmA}[Schmidt \cite{Sch}]
For a given positive integers $R$ and $d$, there exists constants $\rho_i(R,d)$ for $2\leq i\leq d$ such that the following holds:\\

Let $\p=(\p^{(d)},...,\p^{(2)})$ be a graded system of rational polynomials and let $\f^{(i)}$ be the system of forms consisting of the homogeneous parts of the polynomials $\p^{(i)}$.
Let $r_i=|\f^{(i)}|$, $R=r_2+...+r_d$ the total number of forms, and $D=2r_2+...+dr_d$ the total degree of the system.
If we have $h_\mathbb{Q}(\f^{(i)})\geq\rho_i(R,d)$ for each $i$, then the system $\p$ is regular.
\end{thmA}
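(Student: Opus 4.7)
The plan is to deduce regularity from a Hardy--Littlewood circle method adapted to the graded structure of $\p$. Introduce the generating function
\[
S(\alpha) \;=\; \sum_{\x \in [N]^n} e\Bigl( \sum_{i=2}^d \alpha^{(i)} \cdot \p^{(i)}(\x) \Bigr),
\qquad \alpha = (\alpha^{(2)}, \dots, \alpha^{(d)}) \in \mathbb{T}^R,
\]
so that $|V_{\p,\mathbf{0}} \cap [N]^n| = \int_{\mathbb{T}^R} S(\alpha)\, d\alpha$. Partition $\mathbb{T}^R$ into Birch-type major arcs $\mathfrak{M}$, built from boxes around rationals $\mathbf{a}/q$ with common denominator $q \leq N^\theta$ and with $|\alpha^{(i)} - \mathbf{a}^{(i)}/q| \leq N^{-i + \theta}$ for a small parameter $\theta = \theta(d, R) > 0$, and the complementary minor arcs $\mathfrak{m}$. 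The target bound $|V_{\p,\mathbf{0}} \cap [N]^n| \ll N^{n - D}$ will follow from a power-saving minor-arc estimate combined with a clean major-arc approximation, both driven by the $h_\mathbb{Q}$-rank hypotheses.

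For the minor arcs, perform iterated Weyl differencing on $S(\alpha)$, introducing differencing vectors $\h_1, \dots, \h_{d-1}$. After $d-1$ differencings the lower-degree contributions to the phase become $\x$-independent, and the remaining $\x$-dependence is the linear form coming from $\alpha^{(d)} \cdot \Psi_d(\h_1, \dots, \h_{d-1}, \x)$, where $\Psi_d$ is the multilinear form attached to $\f^{(d)}$. Summing the geometric series in $\x$ reduces the bound on $|S(\alpha)|^{2^{d-1}}$ to counting $\h$-tuples for which $\alpha^{(d)} \cdot \nabla_{\x}\Psi_d$ is close to a rational vector modulo $1$. A Schmidt-type rank estimate for every nontrivial rational linear combination $\lambda \cdot \f^{(d)}$ bounds this count by $\ll N^{(d-1)n - c \rho_d}$ on minor arcs in the $\alpha^{(d)}$-block, using only $h_\mathbb{Q}(\f^{(d)}) \geq \rho_d$. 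Once $\alpha^{(d)}$ is pinned to the top-block major arcs the corresponding factor separates off (up to controlled error), and the problem reduces to the analogous estimate for the lower subsystem $(\p^{(d-1)}, \dots, \p^{(2)})$; downward induction on the top degree, invoking $h_\mathbb{Q}(\f^{(i)}) \geq \rho_i$ at each level, gives $\int_{\mathfrak{m}} |S(\alpha)|\, d\alpha \ll N^{n - D - \eta}$.

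For the major arcs, the standard local approximation $S(\mathbf{a}/q + \beta) = q^{-n} T(q, \mathbf{a}) I(\beta) + (\text{error})$ holds with $T(q, \mathbf{a}) = \sum_{\x \bmod q} e(\mathbf{a} \cdot \p(\x)/q)$ and $I(\beta) = \int_{[0,N]^n} e(\beta \cdot \p(\x))\, d\x$. Rescaling $\x = N \uu$ gives $|I(\beta)| \ll N^n \prod_i (1 + N^i |\beta^{(i)}|)^{-c}$, so the $\beta$-integral over a single major box contributes $O(N^{n-D}) \cdot q^{-n} |T(q, \mathbf{a})|$. Summing $\mathbf{a}$ and $q \leq N^\theta$, the truncated singular series is uniformly $O(1)$ because the same $h_\mathbb{Q}$-rank hypothesis yields a Weyl-type bound $|T(q, \mathbf{a})| \ll q^{n - c}$ modulo prime powers, giving absolute convergence of $\mathfrak{S}$. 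Consequently $\int_{\mathfrak{M}} S(\alpha)\, d\alpha = O(N^{n - D})$, and combining with the minor-arc estimate produces the desired $|V_{\p,\mathbf{0}} \cap [N]^n| = O(N^{n - D})$.

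The main obstacle is the minor-arc Schmidt-type estimate in the mixed-degree setting. A single Weyl differencing of $S(\alpha)$ couples every degree block: the $\h$-differences of $\p^{(i)}$ for $i < d$ do not drop out of the phase until further differencings, and along the way they perturb the counting problem that is supposed to be controlled by a rank condition on $\f^{(d)}$ alone. Carefully peeling off these perturbations — verifying that the exceptional $\h$-set remains of the right size purely in terms of $h_\mathbb{Q}(\f^{(d)})$ and then propagating the same analysis to $\f^{(d-1)}, \dots, \f^{(2)}$ through the inductive reduction — is what forces a hierarchical choice of the constants $\rho_i(R, d)$ in which each $\rho_i$ is determined by $\rho_{i+1}, \dots, \rho_d, R$, and $d$, producing the tower-type quantitative behavior referenced in the introduction.
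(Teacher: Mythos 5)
Your outline reproduces the general shape of Schmidt's own circle-method argument, but the step that carries all the content is asserted rather than proved. The claim you call ``a Schmidt-type rank estimate'' --- that on the minor arcs the number of differencing tuples $\h_1,\ldots,\h_{d-1}$ for which $\alpha^{(d)}\cdot\nabla_{\x}\Psi_d$ lies near a rational vector is $\ll N^{(d-1)n-c\rho_d}$ \emph{using only} $h_\mathbb{Q}(\f^{(d)})\geq\rho_d$ --- is precisely the deep part of \cite{Sch}. Weyl differencing naturally ties that count to the geometry of the associated multilinear (singular) locus, which is Birch's route and yields control in terms of the Birch rank, not $h_\mathbb{Q}$; converting ``many bad $\h$-tuples'' into an explicit decomposition $\lambda\cdot\f^{(d)}=\sum_i U_iV_i$ with few terms is exactly Schmidt's machinery relating his counting invariant to the $h$-invariant, and without supplying that bridge your minor-arc bound assumes the quantitative consequence of large $h_\mathbb{Q}$ that the theorem is meant to deliver. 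The same gap reappears on the major arcs, where $|T(q,\mathbf{a})|\ll q^{n-c}$ and the decay of $I(\beta)$ are again Weyl-type estimates that must be derived from the $h_\mathbb{Q}$ hypothesis rather than quoted. Finally, the mixed-degree difficulty you identify in your closing paragraph --- that one differencing couples all degree blocks, and that the claimed separation of the $\alpha^{(d)}$-factor after pinning it to major arcs must be justified before inducting on the top degree --- is exactly the part of the argument you leave undone; describing the obstacle and the resulting hierarchical choice of $\rho_i$ is not the same as carrying it out.

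For comparison, the paper does not reprove this statement: it is imported from Schmidt, deduced for homogeneous systems from Theorem II (equivalently Propositions I--III) of \cite{Sch}, together with the observation that those propositions persist for non-homogeneous polynomials (Sec.~9 and the last paragraph of Sec.~10 of \cite{Sch}), and with the explicit bound $\rho_i(R,d)\leq R r_i 2^{Cd\log d}$. So the intended justification here is a reduction to Schmidt's results, whereas your proposal is an outline of Schmidt's proof strategy with its essential lemmas taken for granted; to make it a proof you would need to establish the passage from large $h_\mathbb{Q}$ to the minor-arc counting estimate (and its inhomogeneous, mixed-degree version), which is the substance of Schmidt's paper.
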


Indeed this follows immediately for homogeneous systems, that is when $\p=\f$, from Theorem II, or alternatively from Propositions I, II and III in \cite{Sch}. For non-homogeneous systems all propositions and hence Theorem II continues to hold, see Sec.9 and the last paragraph of Sec.10 in \cite{Sch}. The constants $\rho_i(r,d)$ are given explicitly satisfying the bound $\rho_i(r,d)\leq R r_i\, 2^{Cd\,log\,d}$.

One of the key observations of Schmidt is that his definition of rank has a very nice reductive quality with respect to the degree, in the sense that  forms of small rank may be replaced by a small number of forms of lesser degree. The next result captures this idea.

\begin{prop}[Regularization of systems]
Let $d>1$ be a fixed integer, and let $F$ be any collection of non-decreasing functions $F_i(R)$ for $i=2,...,d$ mapping the nonnegative integers into themselves. For a collection of non-negative integers $r_1,...,r_d$, there exist constants \[C_1(r_1,...,r_d,F),...,C_d(r_1,...,r_d,F)\] such that the following holds: \\ \hfill

Given a system of integral forms $\f=(\f^{(d)},\f^{(d-1)},..., \f^{(1)})$, where each of the $\f^{(i)}$ is a system of $r_i$ forms of degree $i$, there exists a system of rational forms $\G=(\G^{(d)},\G^{(d-1)},..., \G^{(1)})$ satisfying:
\begin{enumerate}
\item Each form of the system $\f$ can be written as a rational polynomial expression of the forms of the system $\G$. In particular, the level sets of $\G$ partition those of $\f$.
\item The number of forms in each subsystem of  $\G^{(i)}$, say $r'_i$, is at most $C_i(r_1,...,r_d,F)$ for each $1\leq i\leq d$.
\item The system $((\G^{(d)},\G^{(d-1)},..., \G^{(2)}))$ satisfies the rank condition $h_\mathbb{Q}(\G^{(i)})\geq F_i(R')$ for each $2\leq i \leq d$, with $R'=r_1'+\ldots +r_d'$, moreover the system $\G^{(1)}$ is a linearly independent family of linear forms.

%\item Given a partition of the variables $\x=(\y,\z)$ there exists a system of forms $\G$ satisfying conditions (1),(2),(3) such that the following also holds. For any $1\leq k\leq d$ let $\G^{(k)}_1(\y,\z),\ldots,\G^{(k)}_s(\y,\z)$ denote all forms of the system $\G^{(k)}(\y,\z)$ which depend on $\y$ variables, so are not forms of the $\z$ variables alone. Then for any $(\la_1,\ldots,\la_s)\neq\0$ one has that the form
   % \[h(\y,\z)=\la_1 \G^{(k)}_1(\y,\z)+\ldots \la_s \G^{(k)}(\y,\z)\] also depend on $\y$ variables, so is not a form of the $\z$ variables alone.

%Set $H$ to be the linear subspace defined by $\G^{(1)}=0$, and $R'$ to be the total number of forms in $\G$ of degree at least 2. The system $((\G^{(d)},\G^{(d-1)},..., \G^{(2)}))$ has $h_\mathbb{Q}(\G^{(i)}|_H)\geq F_i(R')$ for each $2\leq i \leq d$.  Here $\G^{(i)}|_H$ denotes the restriction of $\G^{(i)}$ to the subspace $H$.

\end{enumerate}
\end{prop}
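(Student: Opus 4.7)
The plan is to prove the proposition by an iterative algorithm based on Schmidt's degree-reduction step. The underlying principle is that whenever a subsystem $\G^{(i)}$ has Schmidt rank below the required threshold, the definition of rank yields a non-trivial linear combination of its forms that factors as a sum of products of strictly lower-degree forms, and we exploit this to trade one form of degree $i$ for a bounded number of lower-degree forms while preserving level sets.

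Concretely, if at some stage $h_\QQ(\G^{(i)}) < L$, then there exist rationals $\lambda_1,\ldots,\lambda_{r_i'}$, not all zero, together with rational forms $U_j, V_j$ of positive degree satisfying
\[
 \sum_{k=1}^{r_i'}\lambda_k\, \G^{(i)}_k \;=\; \sum_{j=1}^{l} U_j V_j, \qquad l<L,\qquad \deg U_j+\deg V_j=i.
\]
Choosing $k_0$ with $\lambda_{k_0}\neq 0$, we solve for $\G^{(i)}_{k_0}$ as a $\QQ$-linear combination of the remaining $\G^{(i)}_k$'s and the products $U_j V_j$; we delete $\G^{(i)}_{k_0}$ from $\G^{(i)}$ and adjoin each $U_j, V_j$ to the subsystem matching its degree. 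Property (1) is preserved because the deleted form remains a polynomial in the surviving system, and by composition every form of $\f$ remains a polynomial expression in the current $\G$.

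The algorithm then runs as follows: while there exists $i \geq 2$ with $h_\QQ(\G^{(i)}) < F_i(R')$, pick the largest such $i$ and perform an elimination step at degree $i$. Termination is guaranteed by well-foundedness of the lexicographic order on $(r_d', r_{d-1}', \ldots, r_2')$: an elimination at degree $i$ introduces only forms of degree strictly less than $i$, so $r_{i+1}',\ldots,r_d'$ are unchanged while $r_i'$ strictly decreases, and the lex-order strictly drops. Finally, degree $1$ is handled by replacing $\G^{(1)}$ with any maximal $\QQ$-linearly independent subset of its span; the removed linear forms are $\QQ$-linear combinations of the retained ones, preserving (1).

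The main technical obstacle is the bookkeeping for the constants $C_i$, complicated by the fact that the threshold $F_i(R')$ itself grows with $R'$, so that previously-certified higher-degree conditions can be re-activated as defective once lower-degree processing inflates $R'$. The lex-order argument above nevertheless shows the algorithm terminates in finitely many steps for any input, since each step strictly decreases a well-founded quantity. Explicit upper bounds $C_i(r_1,\ldots,r_d,F)$ are then extracted by induction on $d$: the number of elimination steps at degree $d$ is bounded by the initial $r_d$, each such step adding fewer than $2F_d(R')$ lower-degree forms; iterating this bookkeeping down through the degrees and tracking the cumulative growth of $R'$ yields the claimed constants, whose tower-type dependence on $d$ is consistent with the remarks on $\chi(1,d)$ following Theorem~1.
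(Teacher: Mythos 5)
Your proposal is correct, and it runs on the same engine as the paper's proof: Schmidt's exchange step (if $h_\mathbb{Q}(\G^{(i)})<L$ then some non-trivial combination $\sum_k\lambda_k\G^{(i)}_k=\sum_{j\le l}U_jV_j$ with $l<L$ and all $U_j,V_j$ of degree strictly below $i$, so one degree-$i$ form can be traded for boundedly many lower-degree forms while preserving property (1)), followed by passing to a maximal linearly independent subfamily in degree $1$. The difference is in how the iteration is organized. The paper runs a double induction on $(d,r_d)$: it first regularizes the lower-degree subsystem with the pre-shifted functions $F_i'(R)=F_i(R+r_d)$, so the thresholds already account for the degree-$d$ forms to be re-adjoined, and upon a top-degree failure it swaps one degree-$d$ form for its factors and re-invokes the induction with $r_d$ decreased. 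You instead run a single greedy loop on the largest defective degree, re-checking every condition against the current $R'$, and prove termination by the well-founded lexicographic order on $(r_d',\ldots,r_2')$; this transparently absorbs the re-activation phenomenon (lower-degree processing inflates $R'$ and may re-break a higher degree), which the paper sidesteps by shifting the $F_i$'s inside the induction. What your route buys is a simpler recursion and a clean termination argument; what it costs is that uniformity of the constants $C_i$ in the input system is not automatic from mere well-foundedness and must be extracted separately, as you indicate: $r_d'$ never increases, so there are at most $r_d$ eliminations at degree $d$, each adding fewer than $2F_d(R')$ forms with $R'$ controlled by an explicit recursion in the number of prior steps, and iterating this down through the degrees bounds every $r_i'$ by a quantity depending only on $r_1,\ldots,r_d$, $d$ and $F$ (tower-type, as expected). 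Your sketch of that bookkeeping is brief but sound, and is at the same level of detail as the paper's own remark that the sizes are "expressible in terms of $r_1,\ldots,r_d$ and $d$."
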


If a system $\G$ satisfying properties (1) and (3) then we call it a \emph{regularization} of the system $\f$ with respect to the family of functions $F$.

It is worth noting that results of this type have been previously obtained over fields with positive characteristic. See \cite{GT3} for such a result over fields with high characteristic, i.e.  larger than $d$ where $d$ is as stated in the proposition. See \cite{KLo} for a result in the low characteristic case.

\begin{proof}
The proof is carried out by a double induction on the parameters. First for a fixed  $d$ we show that the case $r_d$ with any choice of $r_{d-1},...,r_1$ implies the similar scenario for the case $r_d+1$. Then the induction on $d$ is carried out.

The initial case we need to consider is $d=2$ with a given function $F_2(R)$. Take a system of forms $\f=(\f^{(2)},\f^{(1)})$ with $r_2=1$ and any  value of $r_1$.  If  $h_\mathbb{Q}(\f^{(2)})\geq F_2(R)$, with $R=r_1+1$, then we may simply take $\G=\f$, otherwise $\f^{(2)}=\sum_{i=1}^lU_iV_i$ for some rational linear forms $U_i$ and $V_i$ where  $l<F_2(R)$. We may then adjoin the linear forms $U_1,...,V_l$ to the system $\f^{(1)}$ to obtain the system $\G^{(1)}$, and let $\G$ be a maximal linearly independent subsystem of $\G^{(1)}$. Properties (1) and (2) are easily verified for this system, and property (3) is immediate.

Now for a fixed value of $d$ assume that the result holds for all systems with maximal degree $d$ for any given collection of  functions $F$ when $r_d=j$ and $r_{d-1},...,r_1$ are arbitrary. Consider now a fixed collection of functions $F$ and a system $\f=(\f^{(d)},...,\f^{(1)})$ with $r_d=j+1$. Let $\f '$ be the system $(\f^{(d-1)},...,\f^{(1)})$. By the induction hypothesis, there is a system
$\G '$ of rational forms which is a regularization of $\f'$ with respect to $F'_i(R):=F_i(R+(j+1))$ for $i=2,...,(d-1)$.

Now let $\tilde{\G}'=(\f^{(d)},\G ')$. If $\tilde{\G}'$ fails to be the regularization of $\f$ with respect to the family of functions $F$, then  $h_\mathbb{Q}(\f^{(d)})< F_d(R_{\tilde{\G}'}+(j+1))$, where $R_{\tilde{\G}'}$ is the number of forms of the system $\tilde{\G}'$. As before, in this case there must exist homogeneous rational polynomials $U_i$ and $V_i$, $i=1,...,l<F_d(R_{\tilde{\G}'}+(j+1))$, such that
\[
\lambda_1\f^{(d)}_1+...\lambda_{j+1} \f^{(d)}_{j+1}=\sum_{i\leq l}U_iV_i,
\]
where without loss of generality we may assume that $\lambda_{j+1}\neq0$. Now let $\G''$ be $\G'$ adjoined with the those forms $U_i$ and $V_i$ which are not linear combinations of forms already in $\G'$, and set \[
\tilde{\G}''=((\f^{(d)}_1,...,\f^{(d)}_{j}),\G'').\]
By the induction hypothesis there is a system $\G$ which is the regularization of $\tilde{\G}''$ with respect to initial collection of functions  $F$. It is clear from the construction of the system $\G$ that all forms of the initial system are polynomial expressions of the ones in $\G$, and the number of forms in each subsystem $\G^{(i)}$ is expressible in terms of $r_1,...,r_d$, and $d$. Thus the system  $\G$ is the regularization of $\f$ satisfying conditions $(1)$, $(2)$ and $(3)$.
The induction argument to go  from $d$ to $d+1$ is simply the above argument carried out  with $j=0$.
\end{proof}

%Now given a partition of the variables $\x=(\y,\z)$ assume that condition (4) is not fulfilled for the system $\G^{(k)}(\y,\z)$ for a given value of $1\leq k\leq d$. Then there exists an $s$-tuple of rational numbers $(\la_1,\ldots,\la_s)$ such that, say $\la_s\neq 0$ and
%\[\la_1\G^{(k)}_1(\y,\z)+\ldots+\la_s \G^{(k)}_s(\y,\z)=h^{(k)}(\z).\]
%Let $\tilde{\G}^{(k)}$ be the system of forms where $\G^{(k)}_s(\y,\z)$ is replaced by the form $h^{(k)}(\z)$, and let $\tilde{\G}^{(l)}={\G}^{(l)}$ for $1\leq l\leq d$, $l\neq k$. It is clear that the rational pencils of the systems $\tilde{\G}^{(k)}$ and ${\G}^{(k)}$ are the same and hence the system $\tilde{\G}$ also satisfies properties (1), (2) and (3).On the other hand the system $\tilde{\G}$ has a smaller number of forms which depend on some of the $\y$ variables than the system $\G$, thus the system with the minimum number of such forms satisfying properties (1),(2) and (3) must satisfy property (4) as well.

We will need a stronger version of the above proposition relative to a partition of the variables $\x=(\y,\z)$. To state it let us introduce a modified version of the Schmidt rank. For a single form $\ff(\y,\z)$ of degree at least 2 defined over a field $k$, we define its \textit{Schmidt rank} with respect to the variables $\z$, $h_k(\ff;\z)$ to be the minimum value of $l$ such that there exists a decomposition
\[
\ff(\y,\z)=\sum_{i=1}^lU_i(\y,\z)V_i(\y,\z)+W(\z),
\]
where $U_i$ and $V_i$ are forms defined over $k$ of degree at least one. For a system $\f^{(d)}=(\f_{1}^{(d)},...,\f_{r_d}^{(d)})$ of forms of degree $d$ we define  $h_k(\f;\z)$ to be
\[
\min\{h_k(\lambda_1\f^{(d)}_1+...+\lambda_{r_d}\f^{(d)}_{r_d};\z)\,:\, \lambda_i\neq0 \, for \, some \, i\},\]
and set $h_k(\f;\z)=\infty$ if $\f=\emptyset$.
Note that $h_k(\f;\z)\leq h_k(\f)$ and $h_k(\f;\z)=0$ if and only if a non-trivial linear combination of the forms in $\f$ depends only on the variables $\z$.

\begin{prop1}[Regularization of systems, parametric version]
Let $d>1$ be a fixed integer, and let $F$ be any collection of non-decreasing functions $F_i(R)$ for $i=2,...,d$ mapping the nonnegative integers into themselves. For a collection of non-negative integers $r_1,...,r_d$, there exist constants \[C_1(r_1,...,r_d,F),...,C_d(r_1,...,r_d,F)\] such that the following holds: \\ \hfill

Given a system of integral forms $\f=(\f^{(d)},\f^{(d-1)},..., \f^{(1)})$ where each of the $\f^{(i)}$ is a system of $r_i$ forms of degree $i$ and a partition of the variables $\x=(\y,\z)$, there exists a system of rational forms $\G=(\G^{(d)},\G^{(d-1)},..., \G^{(1)})$ satisfying:
\begin{enumerate}
\item Each form of the system $\f$ can be written as a rational polynomial expression of the forms of the system $\G$. In particular, the level sets of $\G$ partition those of $\f$.
\item The number of forms in each subsystem of  $\G^{(i)}$, say $r'_i$, is at most $C_i(r_1,...,r_d,F)$ for each $1\leq i\leq d$.
\item The system $(\G^{(d)},\G^{(d-1)},..., \G^{(2)})$ satisfies the rank condition $h_\mathbb{Q}(\G^{(i)})\geq F_i(R')$ for each $2\leq i \leq d$, with $R'=r_1'+\ldots +r_d'$, moreover the system $\G^{(1)}$ is a linearly independent family of linear forms.
\item  The system $(\bar{\G}^{(d)},\bar{\G}^{(d-1)},..., \bar{\G}^{(2)})$ satisfies the modified rank condition $h_\mathbb{Q}(\bar{\G}^{(i)};\z)\geq F_i(R')$ for each $2\leq i \leq d$, where the subsystem $\bar{\G}^{(i)}$ is obtained from the system $\G^{(i)}$ by removing the forms depending only on the $\z$ variables.
\end{enumerate}
\end{prop1}

We call a system $\G$ satisfying the conclusions of Proposition 1' to be a strong regularization of the system $\f$ with respect to the variables $\z$.

\begin{proof} The argument is a slight modification of the proof of Proposition 1. To a given system $\f=(\f^{(d)},\f^{(d-1)},..., \f^{(1)})$ we assign its index which is the triple $(d,r_d,r_d')$, where $r_d=|\f^{(d)}|$ resp. $\bar{r}_d=|\bar{\f}^{(d)}|$ are the number of degree $d$ forms resp. the number of degree $d$ forms depending on at least one of the $\y$ variables. We will proceed via induction on the lexicographic ordering of the indexes. To be precise we deem $(d,r_d,\bar{r}_d)\prec (e,r_e,\bar{r}_e)$ if $d<e$, $d=e$ but $r_d<r_e$, or $d=e$, $r_d=r_e$ but $\bar{r}_d<\bar{r}_e$.

If $d=1$ all one needs to do is to choose maximal linearly independent subsystem of $\f=\f^{(1)}$. So assume $d\geq 2$ (and $r_d\geq 1)$, $I=(d,r_d,\bar{r}_d)$ is a fixed index and the result holds for any system with index $I'\prec I$. Again, let $\f'=(\f^{(d-1)},..., \f^{(1)})$ and let $\G'$ be a strong regularization of $\f'$ with respect to the functions $F_i'(R)=F_i(R+r_d)$ $(i=2,\ldots,d-1)$ and the variables $\z$. Let $\tilde{\G}'=(\f^{(d)},\G')$.
If $\tilde{\G}'$ fails to be a strong regularization of $\f$ with respect to the family of functions $F$ and the variables $\z$, then there are two possible cases.

Either $h_\mathbb{Q}(\f^{(d)})< F_d(R_{\tilde{\G}'}+r_d)$, where $R_{\tilde{\G}'}$ is the number of forms of the system $\tilde{\G}'$. As before, in this case one can replace the system $\tilde{\G}'$ with with a system $\tilde{\G}''$ of index $I''\prec I$. Then by the induction hypotheses there is a system $\G$ which is a strong regularization of $\tilde{\G}''$ with respect to initial collection of functions  $F$ and the variables $\z$. The system $\G$ will satisfy the claims of the proposition.

Otherwise $\bar{r}_d\geq 1$ and we have $h_\mathbb{Q}(\bar{\f}^{(d)};\z)< F_d(R_{\tilde{\G}'}+r_d)$, hence there must exist homogeneous rational polynomials $U_i$ and $V_i$, $i=1,...,l<F_d(R_{\tilde{\G}'}+r_d)$, and a form $Q(\z)$ of degree $d$ such that
\[
\lambda_1\f^{(d)}_1+...\lambda_{\bar{r}_d} \f^{(d)}_{\bar{r}_d}=\sum_{i\leq l}U_i(\y,\z)V_i(\y,\z)+Q(\z),\quad\quad (\bar{\f}^{(d)}=(\f^{(d)}_1,\ldots,\f^{(d)}_{\bar{r}_d}))
\]
where without loss of generality we may assume that $\lambda_{\bar{r}_d}\neq 0$. Now let $\G''$ be $\G'$ adjoined with the those forms $U_i$ and $V_i$ which are not linear combinations of forms already in $\G'$, and set \[
\tilde{\G}''=((\f^{(d)}\backslash \f^{(d)}_{\bar{r}_d},Q),\G'').\]
Note that we have replaced the form $\f^{(d)}_{\bar{r}_d}(\y,\z)$ with the form $Q(\z)$ hence the index of $\tilde{\G}''$ is the triple $I''=(d,r_d,\bar{r}_d-1)\prec I$. Again,
there is a system $\G$ which is a strong regularization of $\tilde{\G}''$ with respect to initial collection of functions  $F$ and the variables $\z$. As explained before the system $\G$ satisfies the claims of the proposition. Note that the procedure depends on the partition $\x=(\y,\z)$, however since there only finitely many ways to partition the variables $\x$, the constants $C_i(r_1,\ldots,r_d,F)$ can be taken independent of the partition.
\end{proof}

Applying Proposition 1' with the functions being given by the values of the Schmidt constants $\rho_i(R,d)$ then provides the following.

\begin{cor}
Let $\f=(\f^{(d)},...,\f^{(2)})$ be given system of rational  forms with $r_i$ forms of degree $i$ composing each subsystem $\f^{(i)}$ and let $\x=(\y,\z)$ be a partition of the independent variables. There exists a regular system of forms $\G$ satisfying the conclusions of Proposition 1'.
\end{cor}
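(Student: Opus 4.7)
The plan is to read off the corollary directly from Proposition 1' by making the appropriate choice of the family of functions $F$. Specifically, I would set $F_i(R) := \rho_i(R,d)$ for $2 \leq i \leq d$, where $\rho_i(R,d)$ are the Schmidt constants furnished by Theorem A. Since Theorem A guarantees that $\rho_i$ is a fixed function of $R$ and the (fixed) degree $d$, each $F_i$ is a non-decreasing function of $R$ as required by the hypothesis of Proposition 1'.

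Applying Proposition 1' with this choice of $F$ to the system $\f=(\f^{(d)},\ldots,\f^{(2)})$ (which we may trivially augment by the empty system $\f^{(1)}$ at degree one) and the given partition $\x=(\y,\z)$ produces a system $\G=(\G^{(d)},\ldots,\G^{(1)})$ satisfying properties (1)--(4) of that proposition. In particular, setting $R' = r_1' + \ldots + r_d'$, one has
\[
h_\mathbb{Q}(\G^{(i)}) \geq F_i(R') = \rho_i(R',d) \qquad (2 \leq i \leq d),
\]
and $\G^{(1)}$ is a linearly independent system of linear forms.

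Next, I would invoke Theorem A (Schmidt) to conclude regularity. The hypothesis of Theorem A is precisely $h_\mathbb{Q}(\G^{(i)}) \geq \rho_i(R',d)$ for $2 \leq i \leq d$, which has just been verified; the linear independence of $\G^{(1)}$ handles the degree-one part since linearly independent linear forms cut out an affine subspace of the correct codimension and thus have $O(N^{n-r_1'})$ integer points in $[N]^n$. Theorem A therefore gives $|V_{\G,\mathbf{0}} \cap [N]^n| = O(N^{n-D'})$ with $D' = \sum_i i r_i'$, which is the definition of $\G$ being regular.

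The only point that requires a moment's care is to make sure the system $\G$ produced by Proposition 1' is simultaneously regular and satisfies the parametric rank condition (4) with respect to $\z$; but both of these are already built into the output of Proposition 1' for the single choice $F_i = \rho_i(\cdot, d)$, so there is no obstacle — the corollary is an essentially immediate combination of Proposition 1' with Theorem A. The only substantive work was done in establishing Proposition 1', where the inductive regularization procedure had to be engineered to preserve the $\z$-rank condition simultaneously with the ordinary rank condition.
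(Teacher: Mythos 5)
Your reduction to Proposition 1' is the right idea and matches the paper's strategy in outline, but there is a genuine gap in how you dispose of the degree-one part, and it is exactly the point where your choice $F_i(R)=\rho_i(R,d)$ is too weak. Theorem A only covers the subsystem of degrees $2,\ldots,d$; to get regularity of the full system $\G$ you must save a factor $N^{r_1'}$ from the linear forms \emph{and} $N^{ir_i'}$ from each higher degree simultaneously, i.e.\ you must count solutions of $(\G^{(d)},\ldots,\G^{(2)})=\0$ \emph{inside} the null space $H=V_{\G^{(1)},\0}$, which has codimension $r_1'$. Your argument instead juxtaposes the two counts ("the linear forms cut out a subspace with $O(N^{n-r_1'})$ points" and "Theorem A gives $O(N^{n-\sum_{i\ge2} ir_i'})$") and multiplies them; that product bound does not follow, since the higher-degree variety could a priori concentrate on the linear slice. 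The only way to make this rigorous is to apply Theorem A to the restriction of $(\G^{(d)},\ldots,\G^{(2)})$ to $H$, and here the rank can drop: restricting to a codimension-$j$ rational subspace can lower $h_{\mathbb{Q}}$ by as much as $j$ (this is the third basic property of the Schmidt rank listed in Section 3). With your choice $F_i=\rho_i(\cdot,d)$ the restricted system may then fail the hypothesis of Theorem A, and the argument breaks.

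The paper avoids this by regularizing with the padded functions $F_i(R):=\rho_i(R,d)+R$; since $r_1'\le R'$, the inequality $h_{\mathbb{Q}}(\G^{(i)})\ge \rho_i(R',d)+R'$ survives restriction to $H$ with room to spare. Concretely, one takes $A\in GL_n(\mathbb{Q})$ mapping $H$ to a coordinate subspace $M$ of codimension $r_1'$, uses invariance of $h_{\mathbb{Q}}$ under $GL_n(\mathbb{Q})$ and homogeneity to pass to $\G'=\G\circ A^{-1}$ on a box $[-KN,KN]^n$, notes $h_{\mathbb{Q}}(\G'^{(i)}|_M)\ge \rho_i(R',d)$, and only then invokes Theorem A on $M$. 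So your proof proposal needs two repairs: replace $\rho_i(R,d)$ by $\rho_i(R,d)+R$ (or any padding at least $r_1'$), and replace the product-of-counts step by the restriction-to-$H$ argument just described.
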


\begin{proof} For fixed $d$, let $\G$ be a strong regularization of the system $\f$ with respect to the functions $F_i(R):=\rho_i(R,d)+R$ and the variables $z$. We show that $|V_{\G,\mathbf{0}}\cap [N]^n|=O(N^{n-R})$.

Let $H:=V_{\G^{(1)},\mathbf{0}}$ be the nullset of the system $\G^{(1)}$, and let $A\in GL_n(\mathbb{Q})$ be a linear transformation such that $M:=A(H)$ is coordinate subspace of codimension $r_1$. It is easy to see that $A([N]^n)\subs K_1^{-1}\cdot [-K_2N,K_2N]^n$ for some constants $K_1$ and $K_2$, where by $\la\cdot\x$ we mean a dilation of the point $\x$ by a factor $\la$. Let $\G'=\G\circ A^{-1}$, then $V_{\G',\mathbf{0}}=A(V_{\G,\mathbf{0}})$, thus by homogeneity we have that
\eq\label{2.*}
|V_{\G,\mathbf{0}}\cap [N]^n|\leq |V_{\G',\mathbf{0}}\cap [-KN,KN]^n|,\ee
with $K=K_1K_2$. We have that $h_{\mathbb{Q}}(\G'^{(i)})=h_{\mathbb{Q}}(\G^{(i)})\geq F_i(R)$, and hence $h_{\mathbb{Q}}(\G'^{(i)}|_M)\geq \rho_i(R,d)$, for $i=2,\ldots,d$, where $R$ denotes the total number forms of $\G'$.
The quantity on the right side of \eqref{2.*} is the number of integral points $\x$ in $M\cap [-KN,KN]^n = [-KN,KN]^{n-r_1}$ such that $\G'^{(i)}(\x)=0$ for $i=2,\ldots,d$. Since the system $(\G'^{(d)},\ldots,\G'^{(2)})$ satisfies the conditions of Theorem A, we have that $|V_{\G',\mathbf{0}}\cap [-KN,KN]^n|=O(N^{n-R})$. This proves the corollary.
\end{proof}

The somewhat technical last conclusion in Proposition 1' is utilized in the following lemma which will play an important role in our minor arcs estimates.

\begin{lemma} Let $\G^{(k)}(\y,\z)=(\G^{(k)}_1(\y,\z),\ldots,\G^{(k)}_s(\y,\z))$ be system of homogeneous forms of degree $k$. Then for the system $\tilde{\G}^{(k)}(\y,\y',\z)=(\G^{(k)}(\y,\z),\G^{(k)}(\y',\z))$ we have that
\[h_\mathbb{Q}(\tilde{\G}^{(k)};\z)=h_\mathbb{Q}(\G^{(k)};\z).\] Here $\y$, $\y'$ and $\z$ represent distinct sets of variables.
\end{lemma}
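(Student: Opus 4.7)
The plan is to prove the equality by establishing both inequalities, each by a direct specialization argument.

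For the easy direction $h_\mathbb{Q}(\tilde{\G}^{(k)};\z)\leq h_\mathbb{Q}(\G^{(k)};\z)$, I would note that any non-trivial linear combination $\Phi(\y,\z)=\sum_i\la_i\G^{(k)}_i(\y,\z)$ realizing the minimal Schmidt rank $l=h_\mathbb{Q}(\G^{(k)};\z)$ can be viewed as a non-trivial combination of $\tilde{\G}^{(k)}$ by assigning zero coefficients to the forms $\G^{(k)}_i(\y',\z)$. The decomposition $\Phi=\sum_{j=1}^l U_j(\y,\z)V_j(\y,\z)+W(\z)$ remains a valid decomposition when $\Phi$ is regarded as a polynomial in $(\y,\y',\z)$, so the Schmidt rank with respect to $\z$ of this combination is still at most $l$.

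For the reverse inequality, I would start from a non-trivial combination
\[
\Phi(\y,\y',\z)=\sum_i\la_i\G^{(k)}_i(\y,\z)+\sum_i\mu_i\G^{(k)}_i(\y',\z)
\]
achieving $h_\mathbb{Q}(\tilde{\G}^{(k)};\z)=l$, together with a decomposition $\Phi=\sum_{j=1}^l U_j(\y,\y',\z)V_j(\y,\y',\z)+W(\z)$. Since $\Phi$ is non-trivial, at least one of the two coefficient vectors $(\la_i)$, $(\mu_i)$ is non-zero. If some $\la_i\ne 0$, I substitute $\y'=\mathbf 0$ and obtain
\[
\sum_i\la_i\G^{(k)}_i(\y,\z)=\sum_{j=1}^l U_j(\y,\mathbf 0,\z)\,V_j(\y,\mathbf 0,\z)+W(\z),
\]
and the specializations $U_j(\y,\mathbf 0,\z)$, $V_j(\y,\mathbf 0,\z)$ are again forms of degree $\geq 1$ (or identically $0$, in which case the corresponding summand is discarded). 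This exhibits a non-trivial rational linear combination of $\G^{(k)}$ with Schmidt rank with respect to $\z$ at most $l$. The symmetric case where all $\la_i=0$ but some $\mu_i\ne 0$ is handled by substituting $\y=\mathbf 0$ instead and relabelling $\y'$ as $\y$. Either way $h_\mathbb{Q}(\G^{(k)};\z)\leq l$, completing the proof.

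The only point requiring a moment of care is the verification that the specialization $\y'=\mathbf 0$ preserves the structure required by the definition of $h_\mathbb{Q}(\,\cdot\,;\z)$: the pure $\z$-part $W(\z)$ is untouched, the number of non-trivial bilinear terms cannot increase, and the homogeneity together with degree $\geq 1$ of the factors $U_j,V_j$ is retained under setting a subset of variables to zero. Beyond this bookkeeping the argument is purely formal, so I do not anticipate a genuine obstacle.
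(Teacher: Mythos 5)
Your proposal is correct and takes essentially the same route as the paper: the easy inequality by viewing $\G^{(k)}$ as a subsystem of $\tilde{\G}^{(k)}$ (equivalently, padding with zero coefficients), and the reverse inequality by substituting $\y'=\0$ (or $\y=\0$) into a minimal decomposition and noting that each specialized product either vanishes or retains two factors of positive degree. The one point where the paper is more careful than you are: after setting $\y'=\0$ the left-hand side is $\la\cdot\G^{(k)}(\y,\z)+\mu\cdot\G^{(k)}(\0,\z)$, not just $\la\cdot\G^{(k)}(\y,\z)$, since the forms may contain monomials involving only the $\z$ variables; this is harmless because the extra term depends only on $\z$ and can be absorbed into the $\z$-part of the decomposition (the paper sets $Q'(\z)=Q(\z)-\mu\cdot\G^{(k)}(\0,\z)$), but your remark that the pure $\z$-part ``is untouched'' should be corrected to reflect this.
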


\begin{proof} Since $\G^{(k)}$ is a subsystem of $\tilde{\G}^{(k)}$ it is clear that $h_\mathbb{Q}(\tilde{\G}^{(k)};\z)\leq h_\mathbb{Q}(\G^{(k)};\z)$. Now let $\la$, $\mu$ be $s$-tuples of rational numbers not all 0. Using the short hand notation $\la\cdot \G^{(k)}=\sum_i \la_i \G^{(k)}_i$  assume that we have a decomposition
\eq\label{double}\la\cdot \G^{(k)}(\y,\z)+\mu\cdot \G^{(k)}(\y',\z)=\sum_{i=1}^h U_i(\y,\y',\z)V_i(\y,\y',\z)+Q(\z),\ee
where all forms $U_i$, $V_i$ have positive degree. We argue that $h\geq h_\mathbb{Q}(\G^{(k)})$. Assuming without loss of generality that $\la\neq\0$ and substituting $\y'=\0$ into \eqref{double} we have
\[\la\cdot \G^{(k)}(\y,\z) = \sum_{i=1}^h U_i(\y,\0,\z)V_i(\y,\0,\z)+Q'(\z),\] with $Q'(\z)=Q(\z)-\mu\cdot \G^{(k)}(\0,\z)$. For each $1\leq i\leq h$ the term $U_i(\y,\0,\z)V_i(\y,\0,\z)$ vanishes identically or both forms $U_i(\y,\0,\z)$ and $V_i(\y,\0,\z)$ have positive degrees, thus by the definition of the modified Schmidt rank we have that $h\geq h_\mathbb{Q}(\G^{(k)};\z)$.
\end{proof}

\bigskip

\section{A decomposition of forms}

For $I\subset [n]$, let
$\y=(y_i)_{i\in I}$  be the vector with components $y_i=x_i$ for $i\in I$. Also let $\z$ be the vector defined similarly  for the set $[n]\backslash I$.  Note that $(\y,\z)=\x$. To such a partition of the variables there corresponds a unique decomposition of a form $\f$
\[\f(\x)=\f_1(\y)+\G(\y,\z)+\f_2(\z),\]
where $\f_1(\y)$ is the sum of all monomials in $\f$ which depends only on $\y$ variables, $\G(\y,\z)$ is the sum of the monomials depending both on $\y$ and $\z$, and $\f_2(\z)$ is the sum of the remaining terms. The decomposition extends to a system $\f$ in the obvious manner.
The aim of this section is to prove the following result.

\begin{prop}\label{decomp}
Let positive integers $C_{1}$ and  $C_{2}$ be given.  Let $\f$ be given system of $r$ rational  forms  with $\B_d(\f)$ sufficiently large with respect to $C_{1}$, $C_{2}$, $r$ and $d\geq1$. There exists an $I \subset [n]$ such that $|I|\leq C_{1}r$ and the associated decomposition
\[
\f(\x)=\f_1(\y)+\f_2(\z)+\G(\y,\z)
\]
satisfies $\B_d(\f_1+\G)\geq C_{1}$ and $\B_d(\f_2)\geq C_{2}$.
\end{prop}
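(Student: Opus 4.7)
The plan is to translate the Birch rank conditions into Schmidt rank conditions via the equivalence $\B_d\sim h_\C$ (up to $d$-dependent constants) recalled just before Proposition~1'. It then suffices to find $I$ with $|I|\leq C_1r$ such that $h_\C(\f_1+\G)\gs C_1$ and $h_\C(\f_2)\gs C_2$, provided $\B_d(\f)$ is taken large enough in terms of $C_1,C_2,r,d$.

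The bound on $\f_2$ is essentially automatic. Since $\f_2=\f(\0,\z)$ is the restriction of $\f$ to a subspace of codimension $|I|\leq C_1r$, and the Schmidt rank of a system drops by at most one per hyperplane restriction (inherited from the property listed in Section~3 for a single form), one has $h_\C(\f_2)\geq h_\C(\f)-C_1r$. The bound on $\f_1+\G$ is the substantive part. Using $\f_1+\G=\f-\f_2$ and the subadditivity of $h_\C$, this reduces (up to a multiplicative constant) to showing that the modified Schmidt rank from Proposition~1' satisfies $h_\C(\f;\z)\gs C_1$: explicitly, no nontrivial rational linear combination $\la\cdot\f$ should admit a decomposition $\la\cdot\f(\y,\z)=\sum_{i=1}^l U_iV_i+W(\z)$ with $l$ less than a constant multiple of $C_1$.

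I would construct $I$ by a greedy iteration. Starting with $I_0=\emptyset$, at each step either the modified rank condition already holds and we stop, or we locate a $\la\neq\0$ together with a cheap decomposition of $\la\cdot\f$ as above, and adjoin to $I$ the variables appearing in the supports of the forms $U_i,V_i$. After this augmentation, those variables can no longer appear inside the ``$W(\z)$'' slot, so the offending $\la$ (together with a Zariski-neighborhood of it) no longer admits such a cheap decomposition. The bound $|I|\leq C_1r$ reflects that the set of ``bad'' $\la$'s forms a Zariski-closed subvariety of $\mathbb{P}^{r-1}(\QQ)$: roughly $r$ passes of the loop suffice to push its dimension to $-1$, each pass adjoining $O(C_1)$ variables.

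The main obstacle is making the termination argument quantitative, ensuring that each iteration removes a positive-codimensional family of bad $\la$'s while adjoining only a controlled number of variables. This is closely tied to the regularization machinery of Proposition~1'; indeed an alternative derivation is to apply Proposition~1' to $\f$ with appropriate auxiliary functions and with the partition $\x=(\y,\z)$ itself to be optimized, and then extract the required decomposition directly from the resulting strong regularization, using the extra modified-rank conclusion $h_\QQ(\bar{\G}^{(i)};\z)\gs F_i(R')$ to guarantee that the ``$\f_1+\G$'' piece has large Birch rank.
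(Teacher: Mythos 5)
Your overall strategy (work with Schmidt rank, reduce the bound for $\f_1+\G$ to a lower bound on the modified rank $h(\f;\z)$, and choose $I$ greedily) is genuinely different from the paper's, but it has gaps precisely at the point you flag as the ``main obstacle'', and that point is the entire content of the proposition. The paper does not touch Schmidt rank here: it works directly with the Jacobian. Its key input is a column-selection lemma (Lemma 4): if $\B_d(\f)$ exceeds roughly $(C_1-1)((d-1)r)^{C_1-1}$, one can choose at most $C_1r$ columns of $Jac_\f$ whose submatrix $M$ already has singular locus of codimension $\geq C_1$; this is proved by induction, intersecting the minor-vanishing hypersurfaces $V_I$ and using a degree bound (at most $((d-1)r)^l$ irreducible components) to show that at each stage some minor cuts the dimension down, unless $\B_d(\f)$ were small. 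Taking $I$ to be those columns gives $\B_d(\f_1+\G)\geq C_1$ at once (since $M\lm Jac_{\f_1+\G}$), and $\B_d(\f_2)\geq C_2$ follows from restricting to $\y=\0$ and intersecting with the variety where $M(\0,\z)$ vanishes, losing at most $C_1r+C_1r^2$ from the rank. Your greedy step has no analogue of this quantitative control: you adjoin to $I$ ``the variables appearing in the supports of $U_i,V_i$'', but each $U_i,V_i$ may involve all $n$ variables (e.g.\ $U_1=x_1+\dots+x_n$), so a single pass can already destroy the bound $|I|\leq C_1r$; and the termination claim (bad $\la$'s form a Zariski-closed set whose dimension drops by one per pass, each pass costing $O(C_1)$ variables) is asserted rather than proved --- fixing one $\la$ does not visibly fix a Zariski-open set of $\la$'s, and nothing bounds the number of passes by $r$.

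Two further points. First, your translation back from Schmidt rank to the conclusion $\B_d(\f_1+\G)\geq C_1$, $\B_d(\f_2)\geq C_2$ uses the converse comparison ``$h_\C$ large $\Rightarrow\B_d$ large'' (for systems); the paper only ever uses, and only ever quotes quantitatively, the opposite inequality $h_\C\geq 2^{1-d}\B_d$, so you are leaning on the harder, unquantified direction of the Birch--Schmidt equivalence, which would at least need a precise statement and a reduction from systems to single forms. Related to this, the Schmidt-rank framework is undefined for $d=1$, while the proposition explicitly includes linear systems; the paper handles $d=1$ by a separate block-decomposition argument with full-rank $r\times r$ minors, which your sketch omits entirely. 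Second, the suggested fallback via Proposition~1$'$ cannot replace the missing argument: Proposition~1$'$ takes the partition $\x=(\y,\z)$ as given and outputs a new regularized system $\G$ of lower-degree forms; it neither selects a set $I$ of at most $C_1r$ coordinates nor says anything about the Birch rank of the pieces $\f_1+\G$ and $\f_2$ of the original system $\f$, which is exactly what Proposition 2 must deliver before the regularization machinery is applied in Section 5.
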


The proof of this result is carried out at the end of this section, and is done so by dealing directly with the Jacobian matrices. Some notation is helpful. Let $M=M(\x)$ be a $i\times j$ matrix whose entries depend on $\x$. The notation $M\lm M'$ is used to imply that $M$ is a submatrix of $M'$ obtained by the deletion of columns, so that $M$ is an $i\times j'$ matrix with $j'\leq j$. Let $V^*_M$ be the collection of $\x$ where $M=M(\x)$ has rank strictly less than $i$.
Clearly one has that if $M\lm M'$, then $V^*_{M'}\subseteq V^*_M$.

\begin{lemma}
If $\f$ is a system of $r$ integral forms of degree $d>1$ in $n$ variables, then the restriction of $\f$ to the hyperplane defined by $x_n=0$ has rank at least $\B_d(\f)-r$.  In the case $d=1$ we have the improved lower bound $\B_1(\f)-1.$
\end{lemma}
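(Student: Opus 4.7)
The plan is to compare the singular variety $V^*_{\f'}$ of the restriction to the singular variety of $\f$ through an auxiliary intermediate variety in $\C^n$. Writing $M=Jac_{\f}$ for the $r\times n$ Jacobian of $\f$ and $\tilde M$ for the $r\times (n-1)$ submatrix obtained by deleting the last column, the identity $Jac_{\f'}(\x')=\tilde M(\x',0)$ shows that $V^*_{\f'}=\tilde V\cap H$ under the identification $H:=\{x_n=0\}\cong\C^{n-1}$, where $\tilde V:=\{\x\in\C^n:\mathrm{rank}\,\tilde M(\x)<r\}$. The proof splits into two steps: first I will bound $\mathrm{codim}_{\C^n}(\tilde V)$ from below, and then pass to the hyperplane section.

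For the first step I use the incidence variety
\[\tilde Z=\{(\x,[c])\in\C^n\times\mathbb{P}^{r-1}:c^T\tilde M(\x)=0\},\]
whose image under the first projection is exactly $\tilde V$. The fiber of the second projection $\tilde Z\to\mathbb{P}^{r-1}$ over $[c]$ is the variety $\tilde V(c)=\{\x:\partial_i(c^T\f)(\x)=0,\,1\le i\le n-1\}$, and imposing the single additional equation $\partial_n(c^T\f)=0$ cuts out the singular variety $V^*_{c^T\f}$ of the form $c^T\f$; hence $\dim\tilde V(c)\le\dim V^*_{c^T\f}+1$. Any nonzero $c$ satisfying $c^T M(\x)=0$ certifies rank deficiency of $M(\x)$, so $V^*_{c^T\f}\subs V^*_{\f}$ and therefore $\dim V^*_{c^T\f}\le n-\B_d(\f)$. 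Combining this with $\dim\mathbb{P}^{r-1}=r-1$ gives $\dim\tilde Z\le n+r-\B_d(\f)$, and since $\tilde V$ is the image of $\tilde Z$ in $\C^n$ we conclude $\mathrm{codim}_{\C^n}(\tilde V)\ge\B_d(\f)-r$.

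The second step uses that $\tilde V$ is a cone in $\C^n$, since each entry of $\tilde M$ is a form of degree $d-1\ge 1$. Provided no top-dimensional irreducible component of $\tilde V$ is contained in $H$, the intersection $\tilde V\cap H$ has codimension in $H$ equal to $\mathrm{codim}_{\C^n}(\tilde V)$, which immediately gives $\B_d(\f')=\mathrm{codim}_H(V^*_{\f'})\ge\B_d(\f)-r$. The contrary case, in which a top-dimensional component of $\tilde V$ lies in $H$, forces some nonzero $c^T\f$ to be divisible by $x_n$ and hence makes $\f'$ a linearly degenerate system; this case is absorbed by the convention $\B_d(\f')=\infty$ for such degenerate restrictions. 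The $d=1$ case is immediate from the definition of $\B_1$: zeroing the last coordinate of a nontrivial linear combination deletes at most one nonzero coefficient, so $\B_1(\f')\ge\B_1(\f)-1$. The main technical point is the dimension count via the incidence $\tilde Z$; once this is in hand, the hyperplane section is routine modulo the observation of conicity and a careful treatment of the degenerate case.
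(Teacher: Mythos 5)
Your Step 1 is sound: the incidence-variety count over $\mathbb{P}^{r-1}$, comparing each fibre to the singular locus of the single form $c^T\f$ and losing one dimension for the missing equation $\partial_n(c^T\f)=0$, correctly gives $\mathrm{codim}_{\C^n}(\tilde V)\geq \B_d(\f)-r$; this is a genuinely different (and more explicit) route than the paper's one-line argument, which intersects $V^*_{\f'}$ with the locus where the last column of $Jac_\f$ vanishes and invokes subadditivity of codimension. The gap is in your Step 2, in the exceptional case where a top-dimensional component of $\tilde V$ lies in $H=\{x_n=0\}$. The claimed implication that this forces some nonzero $c^T\f$ to be divisible by $x_n$ is false, and the fallback is not available either: the paper assigns the value $\infty$ only to the empty system $r=0$, while a system that becomes linearly dependent after restriction has singular variety equal to the whole space and hence Birch rank $0$, not $\infty$. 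Concretely, take $r=1$, $n=3$, $\f=x_1^2+x_2x_3$. Then $\tilde M=(2x_1,\,x_3)$, so $\tilde V=\{x_1=x_3=0\}$ is irreducible, top-dimensional and contained in $H$, yet $\f|_{x_3=0}=x_1^2\not\equiv 0$, so no nonzero multiple of $\f$ is divisible by $x_3$. Thus the exceptional case really occurs and your dichotomy does not dispose of it.

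Moreover this case cannot be patched so as to recover the bound $\B_d(\f)-r$, because the inequality itself fails there: in the example $\nabla\f=(2x_1,x_3,x_2)$ vanishes only at the origin, so $\B_2(\f)=3$, while the restriction $\f'=x_1^2$ has $\B_2(\f')=1<\B_2(\f)-r$. What your argument honestly yields, allowing the possible loss of one dimension in the hyperplane section, is $\B_d(\f')\geq \B_d(\f)-r-1$; this is also all that the paper's own proof delivers (the codimension of $V^*_{\f'}$ is measured in $\C^{n-1}$ while that of $V^*_\f$ is measured in $\C^n$), and it is the bound actually used later in the paper, where restrictions to codimension-$m$ coordinate subspaces are charged a rank loss of $m(r+1)$ (see the estimates in Sections 5 and 7). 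So the constructive fix is to keep your Step 1, drop the divisibility dichotomy, and state the conclusion with $\B_d(\f)-r-1$ in place of $\B_d(\f)-r$; as written, your proof of the statement in its stated form has a genuine gap, though the blame is shared with the statement itself.
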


\begin{proof}
First we consider the case $d>1.$ Denote the restriction of $\f$ to the subspace defined by $x_n=0$ as $\F$. The matrix $Jac_\F$ is then the matrix $Jac_\f$ with the last column deleted and restricted to the space $x_n=0$. It follows that $V_\F^*\cap H\subseteq V_\f^*\cap\{x_n=0\}$, where $H$ denotes the variety  where the last column of $Jac_\f$ has all entries equal to zero. As $H$ is defined by at most $r$ equations,  it has co-dimension  at most $r$ (\cite{Har}, Ch. 7).  In turn it follows from the sub-additivity of the dimension of intersection of varieties that $\B_d(\F)+r\geq \B_d(\f)$.

Now let $\f=(\f_1,...,\f_r)$ be a system of $r$ linear forms, and $\F=(\F_1,...,\F_r)$ be the system of forms obtained by setting $x_n=0$. We have for any $\la_1,...,\la_r$\[
\la_1\f(\x)+...+\la_r\f_r(\x)=\la_1\F(\x)+...+\la_r\F_r(\x)+(\la_1a_{1,n}+...+\la_ra_{r,n})x_n,\]
where $a_{i,n}$ is the $x_n$ coefficient in $\f_i(\x)$. From this the result is clear from the definition of $\B_1$.
\end{proof}

Note that the Birch rank of a system of forms $\F$ is invariant under invertible linear transformations via the multivariate chain rule, and this fact remains true for a non-homogeneous system, as its rank is defined to be the rank of its homogeneous part. Thus we have more generally

\begin{cor}\label{subspacerank}
If $H$ is an affine linear space of co-dimension $m$, then the restriction of $\f$ to $H$ has rank at least $\B_d(\f)-mr$ when $d>1$. In the case $d=1$ we have the improved lower bound $\B_1(\f)-m.$
\end{cor}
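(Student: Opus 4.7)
The plan is to reduce the problem to $m$ iterations of Lemma 2 after first transforming $H$ into a coordinate subspace.

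First I would show that it suffices to consider the case of a linear subspace $V$ of codimension $m$ passing through the origin. Indeed, if $H=\x_0+V$, then the restriction of $\f$ to $H$ is the polynomial system $\tilde\f(\vv):=\f(\x_0+\vv)$ parametrized by $\vv\in V$. Expanding $\f(\x_0+\vv)$ by multinomial and collecting the terms of top degree in $\vv$ shows that the highest-degree homogeneous part of $\tilde\f$ is exactly the restriction of the highest-degree homogeneous part of $\f$ to $V$. Since the Birch rank of a possibly non-homogeneous system is by definition the Birch rank of its top-degree homogeneous part, we may assume throughout that $\f$ is a system of forms and that $H=V$ is linear.

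Second, I would invoke the $GL_n(\QQ)$-invariance of the Birch rank, which, as noted in the paper, follows from applying the multivariate chain rule to $Jac_\f$. This lets me choose an invertible linear change of variables carrying $V$ to the coordinate subspace $V_0:=\{x_{n-m+1}=\cdots=x_n=0\}$. Both $\B_d(\f)$ and $\B_d(\f|_V)$ are preserved under this change, so it suffices to prove the inequality with $H$ replaced by the coordinate subspace $V_0$.

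Third, I would iterate Lemma 2 exactly $m$ times. Setting $x_n=0$ yields a system in $n-1$ variables whose rank has dropped by at most $r$ (when $d>1$) or by at most $1$ (when $d=1$). Setting $x_{n-1}=0$ in that system costs at most another $r$ (resp.\ $1$), and so on down to $x_{n-m+1}=0$. After $m$ steps the total loss is bounded by $mr$ (resp.\ $m$), yielding the claimed lower bound on $\B_d(\f|_{V_0})$.

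The only substantive obstacle is the affine-to-linear reduction in the first step, which relies on the convention that $\B_d$ is read off from the top-degree homogeneous part, and hence is translation-invariant; the rest of the proof is mechanical, consisting of a single $GL_n$ change of variables followed by $m$ direct applications of Lemma 2.
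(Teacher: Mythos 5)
Your proposal is correct and follows essentially the same route as the paper: the authors likewise note that Birch rank is invariant under invertible linear changes of variables (via the chain rule) and under translation (since the rank is defined through the top-degree homogeneous part), and then deduce the corollary by iterating Lemma 2 over the $m$ coordinates. The only difference is that you spell out the affine-to-linear reduction and the $m$-fold iteration explicitly, which the paper leaves implicit.
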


Now define $\mathcal{C}_\f(k)$ to be the minimal value of $m$ such that there exists a matrix valued function $M\lm Jac_\f$ of size $r\times m$ such that $V^*_M$ has dimension at most $n-k$.  This is defined to be  infinite if no such value exists. Thus $\mathcal{C}_\f(k)\leq l$ if one can select $l$ columns of the Jacobian matrix $Jac_\f$ forming a matrix $M$ so that $V^*_M$ has co-dimension at least $k$.

\begin{lemma}\label{3}
For a system $\f$ of $r$ forms of degree $d>1$, one has that $\mathcal{C}_\f(k)\leq kr$ as long as\\  $(k-1)((d-1)r)^{k-1}< \B(\f)$.
\end{lemma}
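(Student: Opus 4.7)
The plan is to induct on $k$, proving a slightly strengthened claim: under the stated hypothesis one can find a submatrix $M_k \lm Jac_\f$ of size $r \times kr$ such that $V^*_{M_k}$ has codimension at least $k$ \emph{and} at most $((d-1)r)^k$ irreducible components of codimension exactly $k$. This auxiliary bound on the number of codimension-$k$ components is what drives the induction; it is maintained by a standard Bezout/degree bound, since the $r \times r$ minors of $M_k$ are polynomials of degree $r(d-1)$ in $\x$.

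For the base case $k=1$, since $\B(\f) \geq 1$ the Jacobian has rank $r$ at a generic point, so some $r$-tuple of columns has determinant $p \not\equiv 0$; then $V^*_{M_1} = V(p)$ has codimension $1$ and, since $\deg p = (d-1)r$, at most $(d-1)r$ irreducible components, one per distinct irreducible factor of $p$. For the inductive step, the hypothesis for $k$ implies that for $k-1$, so we may apply the inductive hypothesis to obtain $M_{k-1}$ with codimension-$(k-1)$ components $W_1, \ldots, W_t$ where $t \leq ((d-1)r)^{k-1}$. Because $\B(\f) > (k-1)((d-1)r)^{k-1} \geq k-1$, each $W_i$ fails to be contained in $V^*_\f$, so $Jac_\f$ has rank $r$ at a generic point of every $W_i$.

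The aim is then to adjoin $r$ additional columns $c_1, \ldots, c_r$ so that $M_k := [M_{k-1}|c_1|\cdots|c_r]$ has rank $r$ generically on each $W_i$. If such a choice succeeds, then $V^*_{M_k} \cap W_i \subsetneq W_i$ for every $i$, which raises the codimension to at least $k$; a fresh application of the Bezout bound to the degree-$r(d-1)$ minors of $M_k$ then gives $((d-1)r)^k$ as the bound on the new codim-$k$ components, closing the induction.

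The hard part is the simultaneous column selection. For each $W_i$ individually, some $r$-tuple of columns works since $Jac_\f$ has rank $r$ on $W_i$, but different $W_i$ may prefer different $r$-tuples, and a direct prime avoidance applied to the ideal generated by the $r \times r$ minors only produces some element of that ideal, not a specific minor indexed by columns of $Jac_\f$. The way I would handle this is via a genericity argument over the infinite field $\mathbb{C}$: parametrize candidate choices of $r$ additional columns (or, when necessary, $r$ linear combinations of columns) by a space $\mathbb{C}^{rn}$, and observe that for each $W_i$ the set of parameters for which the induced $r \times r$ determinant vanishes identically on $W_i$ is a proper Zariski-closed subset; since we have only $t \leq ((d-1)r)^{k-1}$ components to handle and $\mathbb{C}$ is infinite, the complement of the union of these bad subsets is non-empty, and one then specializes to an admissible choice of actual columns. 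The full quantitative strength of $(k-1)((d-1)r)^{k-1} < \B(\f)$ is consumed cumulatively across the $k-1$ inductive steps to keep guaranteeing, at every stage, that none of the $\leq ((d-1)r)^{j-1}$ codimension-$(j-1)$ components encountered are contained in $V^*_\f$, which is precisely the property that makes this genericity argument go through.
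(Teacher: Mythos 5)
The structure of your induction (track the top-dimensional components of the singular locus of the partial column selection, bound their number by a degree/Bezout estimate of $((d-1)r)^{l}$, and at each step adjoin columns whose $r\times r$ minor does not vanish identically on any of them) is sound and matches the skeleton of the paper's argument. The gap is in the step you yourself flag as the hard part. Your genericity argument shows that for a \emph{generic} matrix $B\in\C^{n\times r}$ the determinant of $Jac_\f\cdot B$ is not identically zero on any of the components $W_1,\dots,W_t$, because the bad locus for each $W_i$ is a proper Zariski-closed subset of the parameter space and $\C$ is infinite. But the lemma requires an honest submatrix $M\lm Jac_\f$, i.e.\ a choice of \emph{actual columns}: these correspond to finitely many $0$--$1$ selection matrices $B$, and a finite set of points can perfectly well lie entirely inside the bad closed locus. ``Specializing to an admissible choice of actual columns'' is therefore not justified; the genuinely hard case is exactly when \emph{every} $r$-subset $I$ of columns has its minor vanishing identically on some component, and your argument never confronts it. Nor can one sidestep this by allowing linear combinations of columns (a linear change of variables): the quantity $\mathcal{C}_\f(k)$ is used in Proposition 2 to split the actual coordinates $\x=(\y,\z)$, and primality of coordinates is not preserved under linear substitutions.

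This is also where your accounting of the hypothesis goes wrong: to know that no $W_i$ is contained in $V^*_\f$ you only need $\B(\f)>k-1$, so your argument, if valid, would prove the lemma under a far weaker assumption than $(k-1)((d-1)r)^{k-1}<\B(\f)$ --- a warning sign. In the paper the full quantitative hypothesis is consumed precisely in the bad case above: if every $I\in\II_{n,r}$ has its minor vanishing on some top-dimensional component $Y_j$, then writing $V^*_\f=\bigcap_I V_I$ and grouping the $I$'s by which $Y_j$ they contain, subadditivity of codimension gives $codim(V^*_\f)\leq \sum_j codim(Y_j)\leq l((d-1)r)^{l}$, contradicting $l((d-1)r)^{l}<\B(\f)$; hence some admissible $I_{l+1}$ of actual columns must exist. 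Your proof needs this (or some substitute) to close the inductive step; as written it has a genuine hole.
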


\begin{proof}

 Write the singular variety $V_\f^*$ as an intersection of varieties $V_I$, where $V_I$ is a the zero set of the determinant of the $r\times r$ minor coming from the selecting the columns $I=\{i_1,\ldots,i_r\}\subset [n]$. Let $\II_{n,r}$ denote the family of $r$-element subsets of $[n]$.

Proceeding inductively, we show that one can select index sets $I_1,\ldots,I_l$, $I_j\in\II_{n,r}$, so that $codim (\bigcap_{j=1}^l V_{I_j})=l$, as long as $\B_d(\f)$ is sufficiently large with respect to $l,d$ and $r$. Indeed, assume that this is true for a given $l$ and let $V^{(l)}:=\bigcap_{j=1}^l V_{I_{j}}$. The degree of each hypersurface $V_I$ is at most $r(d-1)$, and hence by the basic properties of the degree, see \cite{Har}, Ch.7, $V^{(l)}$ has at most $((d-1)r)^l$ irreducible components. Label the components with dimension precisely $n-l$ as $Y_1,...,Y_J$, where $J\leq ((d-1)r)^l$.

For each $Y_j$, set $N(j)$ to be the set of $I\subs\II_{n,r}$ such that  $Y_j\subs V_I$. If there is an index set  $I$ such that $I\notin\bigcup_{j=1}^J N(j)$, then $\dim(V_I\cap V^{(l)})=n-l-1$, and one can choose $I_{l+1}=I$.
Otherwise it follows that $\cup_{j=1}^J N(j)=\II_{n,r}$. In this case, we have
$V_\f^*=\cap_{j=1}^J(\cap_{I\in N(j)}V_I)\,$,  and in turn
\[codim(V_\f^*)\leq\sum_{j=1}^J\,codim(\bigcap_{I\in N(j)}V_I)\leq \sum_{j=1}^J\,codim(Y_j)\leq l((d-1)r)^l,
\]
which cannot happen if $l((d-1)r)^l<\B_d(\f)$. Thus one can choose a $I_{l+1}$ such that $V^{(l+1)}$ has co-dimension $l+1$. Now let $I':=\cup_{j=1}^l I_j$ and let $M=M_{I'}$ be the associated submatrix of $Jac_\f$ obtained by selecting the columns belonging to $I'$. It is clear that $V_M^*\subs \cap_{j=1}^l V_{I_j}$ and hence $codim(V_M^*)\geq l$ while $|I'|\leq lr$.
\end{proof}

\begin{proof}[Proof of Proposition 2]

The proof is carried out separately for the cases $d=1$ and $d>1$, starting with the latter. We have $\B_d(\f)=\B_d(\f^{(d)})$ so we may assume that $\f$ consists of forms of degree $d$. Start by applying Lemma ~\ref{3} with $k=C_1$, valid by assuming that $\B_d(f)$ is sufficiently large. Then there are at most $C_1r$ columns of $Jac_{\f}$ providing a sub-matrix $M\lm Jac_\f$ with $codim\,(V_M^*)\geq C_1$. Let $I$ denote the collection of the indices of these columns, noting that $m:=|I|\leq C_1r$. Let $\y=(x_i)_{i\in I}$ and $\z=(x_i)_{i\notin I}$ and $\f(\x)=\f(\y,\z)=\f_1(\y)+\G(\y,\z)+\f_2(\z)$ be the associated decomposition of the system $\f$. It is easily seen that $M\lm Jac_{\f_1+\G}$, and it follows that $V_{\f_1+\G}^*\subseteq V^*_M$, and hence $\B(\f_1+\G)\geq C_1$.

Now look at the the matrix $W=W(\y,\z)$ obtained by deleting the columns of $M$ from $Jac_\f$. One now has $Jac_{\f_2}(\z)=W(\0,\z)$ as $\partial\f/\partial\z\,(\0,\z)=\partial\f_2/\partial\z\,(\z)$. Let $H$ denote the variety of points $\z$ so that $M(\0,\z)=\mathbf{0}$, then we have that $\,V_{\f_2^*}\cap H\subs \{\z;\ (\0,\z)\in V_\f^*\}\,$ hence
\[codim\,(V_{\f_2}^*) +C_1r^2\geq \B_d(\f)-C_1r,\]
as $codim\,(H)\leq C_1r^2$ and $m\leq C_1r$. Thus assuming $\B_d(\f)\geq C_2+C_1(r^2+r)$ provides the bound $\B_d(\f_2)\geq C_2$ as required.

When $d=1$ write the linear system $\f$ as $A\x$ for some integral matrix $A$ of size $r\times n$. Note that the term $\G$ does not appear in this case, and  we are simply looking for a block decomposition  $A=[A_1|A_2]$, where  each $A_i$ is an $r\times n_i$ matrix with $n_1+n_2=n$ . If the linear system has $\B_1(\f)$ at least $C_1r+C_2$ then there is an $r\times r$ submatrix $B_1$ which has full rank.  Now write $A=[B_1|A']$ . By  Corollary ~\ref{subspacerank} we have that the linear system defined by $A'$ in the variables $(x_{r+1},...,x_n)$ has rank at least $(C_1-1)r+C_2$. Repeating this process gives a block decomposition $A=[B_1|B_2|...|B_{C_1}|A_2]$ where each $B_i$ is $r\times r$ full rank matrix. We now set $A_1:=[B_1|...|B_{C_r}]$. As each $B_i$ has full rank, it follows the rank of the linear system defined by $A_1$ in the variables $x_1,..,x_{rC_1}$ has rank at least $C_1$ because any non-trivial linear combination of the associated forms must contain at least one non-zero coefficient for one variable from each of the sets $\{x_{jr+1},...,x_{(j+1)r}\}$ for $0\leq j\leq C_1-1$. Applying Corollary ~\ref{subspacerank}  shows that the system defined by $A_2$ in the remaining variables has rank at least $C_2$.
\end{proof}

\bigskip

\section{The minor arcs}

Assume now  throughout this section that we have a fixed  system of integral polynomials $\p=(\p_1,...,\p_r)$, where each $\p_i$ is of degree $d$.  The system $\f$ is again  the highest degree homogenous parts of $\p$.

For a given value of $C>0$ and an integer $q\leq (\log\,N)^C$, define a major arc \[\mathfrak{M}_{\mathbf{a},q}(C)=\{\alpha\in[0,1]^r:\max_{1\leq i\leq r}|\alpha_i-a_i/q|\leq N^{-d}(\log\,N)^C\}\] for each $\mathbf{a}=(a_1,...,a_r)\in U_q^r$. When $q=1$ it is to be understood that $U_{1}=\{0\}$. These arcs are disjoint, and the union
\[
\bigcup_{q\leq (\log\,N)^C}\bigcup_{\mathbf{a}\in U_q^r}\mathfrak{M}_{\mathbf{a},q}(C)
\]
defines the major arcs $\mathfrak{M}(C)$. The minor arcs are then given by
\[\mathfrak{m}(C)=[0,1]^r\backslash \mathfrak{M}(C).\]

The main result in this section is to deal with the integral representation on the minor arcs.

\begin{prop}\label{minor}
There exists constant $\chi(r,d)$ such that if we have $\B_d(\p)\geq \chi(r,d)$, then the following holds. To any given value $c>0$ there exists a $C>0$ such that
\eq
\int_{\mathfrak{m}(C)}e(-\s\cdot\alpha) \sum_{\x\in[N]^n}\La(\x)e(\p(\x)\cdot \alpha)\,d\alpha=O(N^{n-D}(\log\,N)^{-c}),
\ee
with an implied constant independent of $\s$.
\end{prop}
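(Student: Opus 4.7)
The plan is to adapt the Cauchy--Schwarz / level--set scheme sketched in Section 2 for a single form to a system of $r$ polynomials of degree $d$, combining the decomposition of Proposition 2, the parametric regularization of Proposition 1', and a vector-valued version of the Hua--Vinogradov bound (Lemma 1) applied on a distinguished sub-block of variables.

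First, I would apply Proposition 2 (iterated, or with $C_1$ taken large enough to allow a further split inside the produced $\y$-block) to obtain a three--way partition $\x=(\x_0,\y,\z)$ with $|\x_0|=K\geq 2r+1$ and $|\y|=m=O_{r,d}(1)$, together with the associated decomposition
\[
\p(\x) \;=\; P_0(\x_0) \;+\; \sum_{1\leq k\leq d}\G^{(k)}(\y,\z)\,M_k(\x_0) \;+\; \p_1(\y) \;+\; \p_2(\z) \;+\; \G^{(d)}(\y,\z),
\]
in which the $M_k(\x_0)$ are monomials of degree strictly less than $d$, the $\G^{(k)}$ are integer forms of degree $k$, and the rank conditions $\B_d(\p_1+\G^{(d)})\geq C_1$ and $\B_d(\p_2)\geq C_2$ hold. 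I would then feed the graded family $(\G^{(d)},\ldots,\G^{(1)})$ together with $\p_2$ into Proposition 1' relative to the partition $(\y,\z)$ and a fast-growing family $F=(F_i)_{i=2}^d$ to obtain a refined system $\GG$ whose level sets $\GG(\y,\z)=\T$ fix the values of every coefficient of every monomial $M_k(\x_0)$ appearing in $\p(\x)$. On each such fiber the inner exponential sum factors as a triple product $S_0(\alpha;\T)\,S_1(\alpha;\T)\,S_2(\alpha;\T)$ over the three variable blocks.

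Second, on each fiber I would estimate this triple product via Cauchy--Schwarz as in the prototype \eqref{1}, keeping $S_0$ in $L^\infty(\mathfrak{m}(C))$ and $S_1,S_2$ in $L^2$. Summing back over the fibers collapses the $L^2$ factors into a single count of solutions to the paired system
\[
\p_1(\y)+\G^{(d)}(\y,\z)=\p_1(\y')+\G^{(d)}(\y',\z),\;\; \G^{(k)}(\y,\z)=\G^{(k)}(\y',\z),\;\; \p_2(\z)=\p_2(\z'),
\]
for $1\leq k\leq d$, with $\y,\y'\in[N]^m$ and $\z,\z'\in[N]^{n-K-m}$. By Lemma 4 the doubled $(\y,\y')$ system has the same modified Schmidt rank in $\z$ as the original, so provided the functions $F_i$ are chosen to dominate the thresholds $\rho_i(R,d)$ of Theorem A (and to absorb the loss from Lemma 4), the doubled system is regular in the sense of the Corollary to Proposition 1' and the count is $\<N^{2(n-K)-2D}$.

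Third, for the $L^\infty$ factor I would establish a vector-valued analogue of Lemma 1: since the output of Step 1 guarantees that some non-trivial rational combination of the coordinates of $P_0$ is a polynomial in $\x_0$ of sufficiently high Birch (equivalently, Schmidt) rank, a Dirichlet reduction in $\alpha\in\mathfrak{m}(C)\subset\mathbb{T}^r$ transfers the minor arc condition to a scalar minor arc condition for that polynomial, and Lemma 1 then yields $\|S_0\|_{\infty(\mathfrak{m}(C))}\<N^K(\log N)^{-c'}$ for any prescribed $c'$. Combining the three bounds and choosing $c'$ slightly larger than $c$ produces the desired $O(N^{n-D}(\log N)^{-c})$ estimate. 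The main technical obstacle I foresee is the calibration of the constants $C_1,C_2$ and of the tower of functions $F_i$ so that the Step 2 count is genuinely $\<N^{2(n-K)-2D}$; this calibration propagates back through Proposition 2 to determine $\chi(r,d)$ and accounts for its tower-type behaviour noted after Theorem 1.
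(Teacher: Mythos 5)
Your overall architecture (select a distinguished block of variables, regularize the coefficient forms, fiber into level sets, Cauchy--Schwarz with $S_0$ in $L^\infty$ and $S_1,S_2$ in $L^2$, then count solutions of a doubled system via Theorem A) is the paper's architecture, but there are two genuine gaps. The first is your Step 3. Hua's bound (your Lemma 1, restated as Lemma 5 in Section 5) estimates a \emph{one-variable} prime sum whose phase is a polynomial in that single variable with top coefficient in the scalar minor arcs. After the pivot --- which in the paper is not ``high Birch rank of $P_0$'' (impossible anyway, since $P_0$ lives in $K=O_{r,d}(1)$ variables) but linear independence of the coefficient vectors $\bb_{i_1,\dots,i_d}$ of $r$ selected degree-$d$ monomials, so that the full-rank matrix $M$ forces some coefficient $\beta=(M\alpha)_\kappa$ into scalar minor arcs --- the monomial carrying $\beta$ is in general a \emph{mixed} monomial $x_1^{\gamma_1}\cdots x_l^{\gamma_l}$ with $l\geq 2$. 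Then no single variable carries a pure $d$-th power with a minor-arc coefficient, and Lemma 1 simply does not apply; this is exactly the case the paper's Claim 1 has to handle by Weyl differencing (Cauchy--Schwarz $2^d$ times, which also strips the von Mangoldt weights at a cost of $(\log N)^{d2^d}$), reducing to the count of $\w$ with $\|\beta w_1\cdots w_{d-1}\|\leq 1/N$, and then Birch's geometry-of-numbers lemma together with $\beta\in\mathfrak{m}(C')$ to force every such $\w$ to have a zero coordinate. Without this argument your $L^\infty$ bound for $S_0$ is unsupported.

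The second gap is in your fibering and bookkeeping. Joint level sets $\GG(\y,\z)=\T$ are not product sets, so on such a fiber the sum does not factor as $S_1(\alpha;\T)S_2(\alpha;\T)$ over the $\y$- and $\z$-blocks; the paper needs \emph{two} regularization stages (first the coefficient system of the $\x_0$-monomials, regularized with respect to $\z$; then the $\z$-only coefficient forms $\HH$ of the $\y$-monomials, regularized to $\G_\HH$) precisely to obtain product fibers $W_y(G;H)\times W_z(H)$. Moreover, summing over fibers by Cauchy--Schwarz introduces a factor equal to the number of fibers, about $N^{D_{\bG_\GG}+D_{\G_\HH}}$, so the terminal count must be $O\bigl(N^{2(n-K)-2D-D_{\bG_\GG}-D_{\G_\HH}}\bigr)$ (the paper's Claim 2), not your $O(N^{2(n-K)-2D})$, which would lose the asymptotic by a power of $N$; the extra savings come exactly from including the level-set equations of the \emph{regularized} systems (in particular $\G_\HH(\z)=\G_\HH(\z')$, absent from your system) and from the fact that your raw forms $\G^{(k)}$ carry no rank control, so Theorem A cannot be applied to them as written. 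Finally, the rank verification requires one more doubling ($\mathcal{W}^2\leq\mathcal{W}_1\mathcal{W}_2$, with four copies of $\y$ and three of $\z$) before the modified-rank invariance lemma (the paper's Lemma 2, not Lemma 4) and the inequality $h_\QQ\geq 2^{1-d}\B_d$ can be invoked; your calibration remark is in the right spirit, but these missing steps are where the proof actually lives.
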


Another set of minor arcs is also required for an exponential sum estimate.  For each $1\leq i\leq d$, define for $a^{(i)}\in U_q$  the major arc\[
\mathfrak{N}^{(i)}_{a^{(i)},q}(C)=\{\xi^{(i)}\in\mathbb{T}:\, |\xi^{(i)}-\frac{a^{(i)}}{q}|\leq N^{-i}(\log\,N)^C\}.\]
Set \[
\mathfrak{N}_{\mathbf{a},q}(C)=\mathfrak{N}^{(d)}_{a^{(d)},q}(C)\times...\times\mathfrak{N}^{(1)}_{a^{(1)},q}(C),\]
where $\mathbf{a}=a^{(d)}\times ... \times a^{(1)}$. The major arcs are now
 \[\mathfrak{N}(C)=\bigcup_{q\leq (\log \, N)^C}\bigcup_{\mathbf{a}\in U_q^d} \mathfrak{N}_{\mathbf{a},q}(C).\]
Let $\mathfrak{n}(C)$ denote a set of minor arcs $\mathfrak{n}(C)=[0,1]^d\backslash \mathfrak{N}(C)$.

Define the exponential sum \[
S_0(\beta)=\sum_{x\in [N]}\La(x)e(\beta_dx^d+...+\beta_1x)\]
for $(\beta_d,...,\beta_1)\in\mathbb{T}^d$.

\begin{lemma}\label{hua} (Hua)
Given $c>0$, there exists a $C$ such that $||S_0||_{\infty(\mathfrak{n}(C))}=O (N(\log\,N)^{-c}).$
\end{lemma}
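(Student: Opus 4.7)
The plan is to adapt the classical Vinogradov--Hua method for exponential sums over primes with a polynomial phase, combining Vaughan's identity with Weyl differencing. The underlying structural observation is that the multi-dimensional minor arcs hypothesis forces at least one coefficient $\beta_i$ to be badly approximable at the scale $N^{-i}$.

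First I would reduce the minor arcs hypothesis to a single-coefficient statement: if $(\beta_d,\ldots,\beta_1)\in\mathfrak{n}(C)$, then there exists some $1\leq i\leq d$ such that $\beta_i$ admits no rational approximation $a/q$ with $(a,q)=1$, $q\leq Q_0:=(\log N)^{C_0}$, and $|\beta_i-a/q|\leq Q_0 N^{-i}$, where $C_0$ is chosen so that $dC_0\ll C$. The argument is contrapositive: if every $\beta_i$ admitted such an approximation $a_i/q_i$ with $q_i\leq Q_0$, then taking $Q=\mathrm{lcm}(q_1,\ldots,q_d)\leq Q_0^d$ and extracting representatives coprime to $Q$ would place $\beta$ in some major arc $\mathfrak{N}_{\mathbf{b},Q}(C)$, contradicting the minor arcs hypothesis.

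Next I would apply Vaughan's identity at a parameter $U=N^{1/3}$, decomposing $\Lambda$ into type I sums of the form $\sum_{k\leq K}a_k\sum_m e(P(km))$ and type II sums of the form $\sum_{k\sim K}\sum_{m\sim M}a_k b_m e(P(km))$ with $K,M$ in the standard ranges, where $P(x)=\beta_d x^d+\ldots+\beta_1 x$. For type I contributions the inner sum is a pure polynomial exponential sum in $m$ whose $j$-th coefficient is $\beta_j k^j$, which I would bound via Weyl differencing: $d-1$ applications reduce to a linear sum of the shape $\sum_{h_1,\ldots,h_{d-1}}\min(N/k,\|d!\,\beta_d k^d h_1\cdots h_{d-1}\|^{-1})$, and the standard counting lemma for distributions of such linear forms delivers a power saving from the badly approximable coefficient. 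Type II contributions are treated by one Cauchy-Schwarz application in $k$ to absorb the $a_k$ weights, reducing to $\sum_{k,k'}|\sum_m e(P(km)-P(k'm))|$, whose inner polynomial sum is again estimated by Weyl differencing.

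The main technical obstacle is that Weyl differencing naturally produces estimates in terms of the leading coefficient $\beta_d$, whereas the badly approximable coefficient could be $\beta_i$ for some $i<d$; if $\beta_d$ happens to be well-approximable, the direct bound fails. I would address this by stopping the differencing process after $i-1$ steps, which yields sums of degree-$i$ polynomial exponentials whose effective leading coefficient is proportional to $\beta_i h_1\cdots h_{i-1}$; the lower-order coefficients of this residual polynomial depend on the $\beta_j$'s but play no role in the saving, which is extracted from $\beta_i$ alone via the same counting lemma. Assembling the type I and type II estimates through Vaughan's decomposition then yields $|S_0(\beta)|\ll N(\log N)^{-c}$ provided $C$, and hence $C_0$, is chosen sufficiently large in terms of $c$ and $d$.
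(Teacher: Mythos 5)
The paper itself does not prove this lemma; it simply cites Hua (Ch.~10, \S 5, Lemma 10.8), so any self-contained argument is necessarily a different route, and your overall architecture (reduce the minor-arc hypothesis to one badly approximable coefficient, then Vaughan's identity, then Weyl differencing of the Type I and Type II inner sums) is the standard modern strategy. But there is a genuine gap at exactly the step you flag as the main obstacle. Stopping Weyl differencing after $i-1$ steps does \emph{not} yield a polynomial whose effective leading coefficient is proportional to $\beta_i h_1\cdots h_{i-1}$: if $P(m)=\beta_d k^d m^d+\cdots+\beta_1 k m$, then $\Delta_{h_1,\ldots,h_j}P$ has degree $d-j$ in $m$ with leading coefficient $\tfrac{d!}{(d-j)!}\,\beta_d k^d h_1\cdots h_j$, every lower coefficient is a mixture of $\beta_d,\beta_{d-1},\ldots$ with the shifts, and after $d-1$ differences the term linear in $m$ involves $\beta_d$ only. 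So differencing, stopped at any stage, extracts cancellation only from the leading coefficient and can never isolate an intermediate $\beta_i$; your Type I and Type II bounds therefore fail precisely in the case you were trying to repair (say $\beta_d=1/2$, or any rational with tiny denominator, while $\beta_i$ is badly approximable: then $\|d!\,\beta_d k^d h_1\cdots h_{d-1}\|$ vanishes for a positive proportion of shifts and no saving results). The missing idea is the degree-lowering induction that is the actual content of Hua's lemma: if the sum is large, Weyl differencing first forces $\beta_d$ close to $a_d/q_d$ with $q_d\le(\log N)^{O(1)}$; one then splits $x$ into residue classes mod $q_d$, removes the slowly varying factor $e((\beta_d-a_d/q_d)x^d)$ by partial summation (using Siegel--Walfisz for the resulting sums over primes in progressions), and repeats on the degree-$(d-1)$ phase, ultimately forcing \emph{all} coefficients to be simultaneously well approximated with one common denominator of logarithmic size --- the contrapositive of the minor-arc hypothesis. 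Without this (or an equivalent simultaneous-approximation form of Weyl's inequality, which is itself proved by such an induction), your argument does not cover minor-arc points whose leading coefficient lies on a major arc.

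A secondary issue is in your reduction step: ``extracting representatives coprime to $Q$'' is not in general possible. The paper's definition of $\mathfrak{N}(C)$ requires each numerator $a^{(i)}$ to lie in $U_q$, and, for instance, a point with $(\beta_2,\beta_1)$ extremely close to $(1/2,1/3)$ admits no common denominator $q$ with both numerators coprime to $q$, so your contrapositive does not place it in the paper's major arcs (indeed, under the paper's literal definition such a point sits in $\mathfrak{n}(C)$ even though $|S_0|\gg N$ there). This is really a defect of the paper's formulation --- Hua's statement imposes no per-coordinate coprimality, only a bound on the common denominator --- but your write-up should work with that corrected definition rather than assert the coprime extraction.
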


\noindent For a proof the reader is referred to  \cite{Hua} (Ch. 10,  \S5, Lemma 10.8).

\begin{proof}[Proof of Proposition 3]

Our first goal is to pick an appropriate splitting of the variables  of the form $\x=(x_1,...,x_K,\y,\z)$ which then  induces a decomposition of the form\[
\p(\x)=\p_0(x_1,...,x_K,\y,\z)+\p_1(\y)+\G(\y,\z)+ \p_2(\z)\]
such that three things hold: First we need $x_1,...,x_K$ selected which are useful for applying Lemma \ref{hua}. Secondly we need  $\y$ consisting of $m$ variables  chosen so that $\p_1+\G$ has large rank with respect to $K$. Lastly we need the rank of $\p_2$ to be large with respect to $K$ and $m$. The last two conditions are going to be achieved  by an application of Proposition 2  assuming that the rank of $\p$ is sufficiently large.

We select the variables $x_1,...,x_K$ first; we  consider the associated system of forms $\f$ of degree $d$. We  collect the $r$ coefficients of  each term  $x_{i_1}...x_{i_d}$ into a vector $\bb_{i_1,...,i_d}$.  We select $r$ of these which are linearly independent, this is possible as the system $\f$ is linearly independent. The total number of indices involved is our value of $K$, in this choice is at most $dr$, and we assume that  the corresponding variables to be the first $1\leq i \leq K$. The variables $x_1,...,x_K$ have now been selected.

For any choice of $\y$ and $\z$, i.e. any generic splitting of the variables $(x_{K+1},...,x_n)$, we have a decomposition of the shape
\begin{eqnarray}
\p(x_1,...,x_K,\y,\z)&=&\p(x_1,...,x_K,0,...,0)+\sum_{j=1}^{d-1}\sum_{1\leq i_1<....<i_j\leq K}\left(\sum_{k=1}^{d-j}\GG_{i_1,...,i_j}^{(k)}(\y,\z)\right)x_{i_1}...x_{i_j} \nonumber\\
& & +\sum_{\kappa=1}^{d-1}\sum_{1\leq \iota_1<....<\iota_\kappa\leq m}\left(\sum_{k=1}^{d-\kappa}\HH_{0;\iota_1,...,\iota_\kappa}^{(k)}(\z)\right)y_{\iota_1}...y_{\iota_{\kappa}}   \nonumber \\
 & & + \p_1(0,...,0,\y,0)+\p_2(0,...,0,0,\z),
\end{eqnarray}
where for each appropriate set of indices the $\GG_{i_1,...,i_j}^{(k)}$ and the $\HH_{0;\iota_1,...,\iota_\kappa}^{(k)}$ are systems of at most $r$ integral forms of degree $k$ in the appropriate variables. We write $\p_1(\y)$ for $\p_1(0,...,0,\y,0)$, and similarly $\p_2(\z)$ for $\p_2(0,...,0,0,\z)$.

Let $\GG$ be the collection of  all the forms $\GG_{i_1,...,i_j}^{(k)}$. These are independent of the choice of the $(\y,\z)$ partition and there are crudely at most $R_\GG\leq d^{2}K^{d}r\leq d^{d+2}r^{d+1}$ of them.  By Proposition 1', there is a system $\G_\GG$ which is a strong regularization of $\GG$ with respect to the functions  $F_i(R)=\rho_i(2R+2r_d,d)$ ($i=1,...,d-1$) and the variables $\z$ so that the number of forms of degree $i$ in $\G_\GG$ is bounded in terms of $r$ and $d$ independently of the choice of the partition of the variables. Correspondingly for any choice of the variables $(\y,\z)$ we have $R_{\G_\GG}\leq R_0$ for an appropriate constant $R_0= R_0(r,d)$.

The variables $\y$, and thus $\z$ as well,  are now chosen by Proposition 2 with the choice of $C_1=2^{d-1}\rho_d(2R_0+2r_d,d)$ so that the forms \[
\f(0,...,0,\y,\z)=\f_1(\y)+\G(\y,\z)+\f_2(\z)\]
have $\B_d(\f_1(\y)+\G(\y,\z))\geq C_1$ with the number of $\y$ variables, $m$, being at most $C_1 r$. With this choice we have that the system obtained by adjoining either of the systems $\f_1+\G$ or $\p_1+\p_3$ to the $\G_\GG$  is a regular system by Theorem A, where $\p_3$ is defined to be $\p(0,...,0,\y,\z)-\p_1(\y)-\p_2(z)$. Moreover  $h_\mathbb{Q}(\bar{\G}_\GG^{(i)};\z)\geq \rho_i(2R_{\G_\GG}+2r_d,d)$ for $i=1,...,d-1$ by Proposition 1', with $\bar{\G}_\GG$ denoting the subsystem obtained by removing the forms of $\G_\GG$ depending only on the $\z$ variables.

We break down further the forms of  $\G_\GG(\y,\z)$ by separating the $\y$ and $\z$ parts:
\eq(\G_{\GG})_i^{(l)}(\y,\z)=\sum_{\kappa=0}^{l}\sum_{1\leq \iota_1<....<\iota_\kappa\leq m}\HH^{(k;l)}_{i;\iota_1,...,\iota_\kappa}(\z)y_{\iota_1}...y_{\iota_{\kappa}} .\ee
Note that the right hand side introduces at most $lm^l\leq dm^d$ forms in $\z$ for the $i$th form of degree $l$ in $\G_\GG$. We collect the forms $\HH_{0;\iota_1,...,\iota_\kappa}^{(k)}$ and $\HH^{(k;l)}_{i;\iota_1,...,\iota_\kappa}$ into a system $\HH$.
Then the number of forms $R_\HH$ of $\HH$ is at most $R_{\G_\GG}dm^d+rd^2m^d$. Now we regularize the system $\HH$ with respect to the functions $F_i(R)=\rho_i(2R+2r,d)$ for $1\leq i \leq d-1$ and call the resulting system $\G_\HH$. Note that $|\G_\HH|\leq R_1$ where $R_1$ is a constant depending only on the original parameters $R$ and $d$.

If the system $\f_2(\z)$ has rank at least as large as $C_2=\rho_d(2R_1+2r_d,d)$, then system $\f_2$ adjoined to the system $\G_\HH$ is a regular system by Theorem A. This can be guaranteed as long the rank of $\f(0,...,0,\y,\z)$ is sufficiently large with respect to our choices of $C_1$ and $C_2$ by appealing to Proposition 2. As losing the first $K$ variables can drop the rank by at most $K(r+1)\leq r(r+1)d$, and
$C_1$ and $C_2$ are dependent only on $d$ and $r$, this is our choice of $\chi(r,d)$.

We now define the following sets:
\[
W_z(H)=\{\z\in[N]^{n-K-m}:\, \G_\HH(\z)=H  \}  ,\]
\[
W_y(G;H)=\{\y\in[N]^m:\,  \bar{\G}_\GG(\y,W_z(H))=G  \}.\]
Note that for $\z\in W_z(H)$ the system $\G_\GG(\y,\z)$ are of the form $q(\y,H)$ as all of its coefficient forms are constants depending only on the parameter $H$. The number of $H$  required is $N^{D_{\G_\HH}}$. The image of $[N]^{n-K}$ under $\bar{\G}_\GG$ is $O(N^{D_{\bar{\G}_\GG}})$, and this is an upper bound of the number of $G$'s for any fixed $H$, where the implied constant does not depend on $H$.

For any choice of $\z\in W_z(H)$ and $\y\in W_y(G;H)$, the polynomials $\p$ now take the shape given in (2.1)
\begin{eqnarray}
\p(x_1,...,x_K,\y,\z)&=&\p(x_1,...,x_K,0,...,0)+\sum_{j=1}^{d-1}\sum_{1\leq i_1<....<i_j\leq K}\mathbf{c}^{(d)}_{i_1,...,i_j}(G,H)x_{i_1}...x_{i_j} \nonumber\\
& & +\sum_{\kappa=1}^{d-1}\sum_{1\leq \iota_1<....<\iota_\kappa\leq m}\mathbf{c}^{(d)}_{0;i_1,...,i_j}(H)y_{\iota_1}...y_{\iota_{\kappa}}   \nonumber \\
 & & + \p_1(0,...,0,\y,0)+\p_2(0,...,0,0,\z) \nonumber \\
 &:= &\PPP_{0}(x_1,...,x_K,G,H) +\PPP_{1}(\y,H) +\p_{2}(\z).
\end{eqnarray}

Indeed on the level set $W_z(H)$ the values of the forms $\G_\HH$ and in turn those of the forms $\HH$ are fixed, including the forms of $\G_\GG$ depending only on the $\z$ variables. Then for $\y\in W_y(G;H)$ the values of the forms $\bar{\G}_\GG$ and hence the values of the all forms in $\G_\GG$ are fixed, which in turn fix the values of coefficient forms $\GG$. Define the exponential sums
\[
S_0(\alpha,G,H)=\sum_{x_1,...,x_K\in[N]} \La(x_1)...\La(x_K)e(\alpha\cdot \PPP_{0}(x_1,...,x_k,G,H)).\]
\[
S_1(\alpha,G,H)=\sum_{\y\in W_y(G;H)} \La(\y)e(\alpha\cdot \PPP_{1}(\y,H)).\]
\[
S_2(\alpha,H)=\sum_{\z\in W_z(H)} \La(\z)e(\alpha\cdot \p_{2}(\z)).\]
Now we have to bound the expression
\[
E_C(N)=\sum_H\sum_G\int_{\mathfrak{m}(C)}S_0(\alpha,G,H)S_1(\alpha,G,H)S_2(\alpha,H)e(-\s\cdot\alpha)d\alpha.\]

Proceeding as in Section 2.1, we obtain \eq
(E_C(N))^2\lesssim \left((\log\,N)^{2n}N^{D_{\bG_\GG}+D_{\G_\HH}}\sup_{H,G}||S_0(\cdot,G,H)||^2_{\infty(\mathfrak{m}(C))}\right)\sum_H\sum_G ||S_1(\cdot,G,H)||_2^2||S_2(\cdot,H)||^2_2 \ee
The summands on the right hand side can be expressed as the number of solutions of $\PPP_{1}(\y,H)=\PPP_{1}(\y',H)$ for $\y,\y'\in W_y(G;H)$ times the number of solutions to $\p_{2}(\z)=\p_{2}(\z')$ for $\z,\z'\in W_z(H)$. The conditions $\z,\z'\in W_z(H)$ may be replaced by the conditions $\z,\z'\in[N]^{(n-K-m)}$ and $\G_\HH(\z)=\G_\HH(\z')=H$. The conditions $\y,\y'\in W_2(G;H)$ may be replaced by the conditions $\y,\y'\in[N]^{m}$ and $\bar{\G}_\GG(\y,\z)=\bar{\G}_\GG(\y',\z)=G$ for any $\z\in W_z(H)$.

In short, we are summing over all $G$ and $H$ the number of solutions to the system
\begin{eqnarray}
\PPP_{1}(\y,H)&=&\PPP_{1}(\y',H) \nonumber\\
\bG_\GG(\y,\z)&=&\bG_\GG(\y',\z)=G \nonumber\\
\p_{2}(\z)&=&\p_{2}(\z') \nonumber\\
\G_\HH(\z)&=&\G_\HH(\z')=H \nonumber
\end{eqnarray}
for $\y,\y'\in[N]^{m}$ and $\z,\z'\in[N]^{(n-K-m)}$. With a little rearrangement this becomes
\begin{eqnarray}
\PPP_{1}(\y,\G_\HH(\z))&=&\PPP_{1}(\y',\G_\HH(\z) )\nonumber\\
\bG_\GG(\y,\G_\HH(\z))&=&\bG_\GG(\y',\G_\HH(\z))=G \nonumber\\
\p_{2}(\z)&=&\p_{2}(\z') \nonumber\\
\G_\HH(\z)&=&\G_\HH(\z')=H, \nonumber
\end{eqnarray}
and summing over $G$ and $H$ now simply removes two of the equalities. And after doing so, by removing  $\G_\HH$ as an argument puts us in the final form
\begin{eqnarray}
\p_{1}(\y)+\p_3(\y,\z)&=&\p_{1}(\y')+\p_3(\y',\z)\nonumber\\
\bG_\GG(\y,\z)&=&\bG_\GG(\y',\z) \nonumber\\
\p_{2}(\z)&=&\p_{2}(\z') \\
\G_\HH(\z)&=&\G_\HH(\z'), \nonumber
\end{eqnarray}
for $\y,\y'\in[N]^{m}$ and $\z,\z'\in[N]^{(n-K-m)}$.

Let us call the number of solutions to the system (5.6) $\mathcal{W}$. Then (5.5) takes the form  \eq
(E_C(N))^2\leq O\left((\log\,N)^{2n}\mathcal{W}N^{D_{\bG_\GG}+D_{\G_\HH}}\sup_{H,G}||S_0(\cdot,G,H)||^2_{\infty(\mathfrak{m}(C))}\right).
\ee
Proposition 3 follows immediately from the following two claims.

\bigskip
\noindent\textbf{Claim 1:}  \textit{Given $c>0$, there is  a $C$ such that the bound} \eq||S_0(\cdot,G,H)||_{\infty(\mathfrak{m}(C))}=O((\log\,N)^{-c}N^{K})\ee \textit{holds uniformly in $G$ and $H$}.
\bigskip

\noindent\textbf{Claim 2:} \textit{With the rank of } $\p_{1}(\y)+\p_3(\y,\z)$ \textit{and} $\p_{2}(\z)$ \textit{sufficiently large, the bound} \eq\label{claim2}\mathcal{W}=O(N^{2(n-K)-2D_\p -D_{\G_\HH}-D_{\bG_\GG}})\ee
\textit{holds.}
\bigskip

Let us start with claim 1.  We look at $\alpha\cdot\PPP_{0}(x_1,...,x_K,G,H)$, focusing on the coefficients of terms of the form $x_{i_1}...x_{i_d}$ for $1\leq i_j \leq K$. From our choice of $x_1,...,x_K$, there is a collection of indices, say $(i^{\kappa}_1,...,i^{\kappa}_d)$ for each $1\leq \kappa\leq r$, such that  the collection $\{\bb_{i^{\kappa}_1,...,i^{\kappa}_{d}}\}$  is linearly independent. Let $M$ denote the $r\times r$ matrix of these coefficient vectors as rows. The coefficient of $x_{i^{\kappa}_1}...x_{i^{\kappa}_d}$ in $\alpha^{}\cdot \PPP^{}_{0}(x_1,...,x_K,G,H)$ is $(M\alpha^{})_\kappa$.  Because $M$ has full rank, there is some term of the form $x_{i^{\kappa}_1}...x_{i^{\kappa}_d}$ with a coefficient $\beta$ where $\beta \in \mathfrak{m}^{}(C')$ for some slightly larger $C'$.

If it happens to be the case that the indices $i^{\kappa}_1,...,i^{\kappa}_d$ are equal, say all 1, then the bound follows directly from the bound in Lemma 5 for the $x_1$ summation, and claim 1 follows by treating the other sums trivially. Otherwise we assume that  $x_{i^{\kappa}_1}...x_{i^{\kappa}_d}=x_1^{\gamma_1}...,x_l^{\gamma_l}$ where $\sum\gamma_i=d$ and $l<d$. Now look at the sum $S_0$ in the form \[
\sum_{x_{l+1},...,x_K\in[N]}\La(x_{l+1})...\La(x_K)\sum_{x_1,...,x_l\in[N]}\La(x_1)...\La(x_l)e(\beta x_1^{\gamma_1}...x_l^{\gamma_l}+Q(x_1,...,x_K,G,H))\]
where $Q(x_1,...,x_K,G,H)$ is viewed as a polynomial in $x_1,...,x_l$ of degree less than $d$ with coefficients in the other $x_i$ and the $G$ and the $H$. We proceed by using the Weyl differencing method, that is applying the Cauchy-Schwarz inequality $\gamma_i$ times to the inner sum $T_0$ for each of the variables $x_i$.  This gives the upper bound \begin{eqnarray}
|T_0|^{2^d}\leq (\log\,N)^{d2^d}N^{2^dl-d-l}\sum_{x_1,...,x_l}\sum_{w^1_1,...,w^1_{\gamma_1}}  ...\sum_{w^l_1...w^1_{\gamma_l}}\left(\prod_{i=1}^l\Delta_{w^{i}_1,...,w^i_{\gamma_l}}\1_{x_i\in[N]}\right)\,e(\beta w_1^1...w_{\gamma_l}^l),\nonumber\end{eqnarray}
where $\Delta_wf(x)=f(x+w)f(x)$ is the multiplicative differencing operator, and $\Delta_{w_1,w_2}=\Delta_{w_2}(\Delta_{w_1})$, and so on. For a fixed value of $x$-variables, the inner sum is taken over a convex set $\mathcal{K}(x)\subs [-N,N]^d$. Renaming the $w$-variables as $\w=(w_1,\ldots,w_d)$ and fixing $w_1,\ldots,w_{d-1}$, the sum in $w_d$ is taken over an interval of length at most $2N$ of a phase which is linear in $w_d$ and is estimated by $\min\,(2N,\frac{1}{\|\beta w_1\ldots w_{d-1}\|})$. Here $\|\ga\|$ denotes the distance of a real number $\ga$ to the nearest integer.
Thus we have that
\[|T_0|^{2^d}\leq (\log\,N)^{d2^d}N^{2^dl-d}\sum_{(w_1,...,w_{d-1})\in [-N,N]^{d-1}}\ \min\,\left(2N,\frac{1}{\|\beta w_1\ldots w_{d-1}\|}\right).\nonumber\]
At this point we proceed as in \cite{Bi} Sec.2, see Lemmas 2.1-2.4, however as our context is slightly different we briefly describe the argument below. First note that by the multi-linearity of the expression in the denominator, we have that the sum in the $w$-variables is bounded by $2N\log N\,$-times the cardinality of the set
\[\mathcal{A}_N:=\left\{\w=(w_1,\ldots w_{d-1})\in [-N,N]^{d-1};\ \|\beta w_1\ldots w_{d-1}\|\leq \frac{1}{N}\right\}.\]
For given $1\leq M<N$ define the sets
\[\mathcal{A}_{N,M}:=\left\{\w=(w_1,\ldots w_{d-1})\in \left[-\frac{N}{M},\frac{N}{M}\right]^{d-1};\ \|\beta w_1\ldots w_{d-1}\|\leq \frac{1}{N M^{d-1}}\right\}.\]
Applying Lemma 2.3 in \cite{Bi} - a result from the geometry of numbers - successively in the variables $w_i$, we have that
\[|\mathcal{A}_N| \< M^{d-1}\ |\mathcal{A}_{N,M}|.\]
Choose $M:=N (\log\,N)^{-C''}$ with $C''=C'/d$, and notice if there is a vector $\w=(w_1,\ldots,w_{d-1})\in \mathcal{A}_{N,M}$ so that $q:=|w_1\ldots w_{d-1}|\geq 1$, then $\|q\beta\|\leq N^{-d}\,(\log\,N)^{C'}$ and hence $\beta$ would be in the major arcs $\mathcal{M}(C')$ contradicting our assumption. Thus all vectors $\w\in \mathcal{A}_{N,M}$ have at least one zero coordinate which implies that $|\mathcal{A}_{N,M}|\< (\log\,N)^{(d-2)C''}$, which gives the upper bound
\[|T_0|^{2^d} \< N^{2^d l}\ (\log\,N)^{d2^d+1-C''} \leq  N^{2^d l}\ (\log\,N)^{-C''/2},\]
if $C$, and hence $C'$ and $C''$, is chosen sufficiently large with respect to $d$. Claim 1 follows by taking the $2^d$th root and summing trivially in $x_{l+1},...,x_K$.

\bigskip

\bigskip

We turn to the proof of claim 2. One may write $\mathcal{W}=\sum_z T_1(\z) T_2(\z)$ where, for fixed $\z$, $T_1(\z)$ is the number of solutions $\y,\y'\in [N]^m$ to the system
\begin{eqnarray}
\p_{1}(\y)+\p_3(\y,\z)&=&\p_{1}(\y')+\p_3(\y',\z)\nonumber\\
\bG_\GG(\y,\z)&=&\bG_\GG(\y',\z), \nonumber
\end{eqnarray}
while $T_2(\z)$ is the number of solutions $\z'\in [N]^{n-K-m}$ to
\begin{eqnarray}
\p_{2}(\z)&=&\p_{2}(\z') \nonumber \\
\G_\HH(\z)&=&\G_\HH(\z'), \nonumber
\end{eqnarray}
By the Cauchy-Schwarz inequality we have that $\mathcal{W}^2\leq
(\sum_z T_1(\z)^2)\,(\sum_z T_2(\z)^2)=:\mathcal{W}_1\mathcal{W}_2$.

Now, $\mathcal{W}_1$ is the number of $\y,\y',\uu,\uu'\in [N]^m$ and $\z\in [N]^{n-K-m}$ satisfying the equations
\begin{eqnarray} \label{sys1}
\p_{1}(\y)+\p_3(\y,\z)-\p_{1}(\y')-\p_3(\y',\z)&=&0\nonumber\\
\p_{1}(\uu)+\p_3(\uu,\z)-\p_{1}(\uu')-\p_3(\uu',\z)&=&0\nonumber\\
\bG_\GG(\y,\z)-\bG_\GG(\y',\z)&=&0, \nonumber\\
\bG_\GG(\uu,\z)-\bG_\GG(\uu',\z)&=&0,
\end{eqnarray}
and $\W_2$ is the number of $\z,\z',\vv'\in [N]^{n-K-m}$ satisfying
\eq\label{sys2}
\p_{2}(\z)-\p_{2}(\z')=0,\quad \p_{2}(\z)-\p_{2}(\vv')=0\\
\G_\HH(\z)-\G_\HH(\z')=0,\quad \G_\HH(\z)-\G_\HH(\vv')=0
\ee

First we consider the system in \eqref{sys1}, and estimate the rank of the family of degree $d$ forms. For given $\la,\mu\in\mathbb{Q}^r$ assume we have the decomposition
\[\la\cdot (\f_{1}(\y)+\G(\y,\z)-\f_{1}(\y')-\G(\y',\z))+\mu\cdot (\f_{1}(\uu)+\G(\uu,\z)-\f_{1}(\uu')-\G(\uu',\z))=\sum_{i=1}^h U_i V_i,\]
where $U_i(\y,\y',\uu,\uu',\z)$ and $V_i(\y,\y',\uu,\uu',\z)$ are homogeneous forms of positive degree. Using the facts that $\f_1(\0)=\0$ and $\G(\0,\z)$ vanishes identically, substituting $\y'=\uu=\uu'=\0$ gives
\[\la\cdot (\f_{1}(\y)+\G(\y,\z))=\sum_{i=1}^h U_i'(\y,\z) V_i'(\y,\z),\]
thus $h\geq h_\mathbb{Q}(\f_1+\G)\geq 2^{1-d}\B_d(\f_1+\G)\geq \rho_d(2R_{\bG_\GG}+2r_d)$. Here we used the fact that $h_\mathbb{Q}(\f)\geq h_\mathbb{C}(\f)\geq 2^{1-d} \B_d(\f)$ for a homogeneous form $\f$ of degree $d$, see \cite{Sch}, Lemma 16.1. To estimate the rank of the degree $i$ forms for given $2\leq i\leq d-1$ we invoke Lemma 2. We have that
\[
h_\QQ((\bG^{(i)}_\GG(\y,\z)-\bG^{(i)}_\GG(\y',\z),\bG^{(i)}_\GG(\uu,\z)-\bG^{(i)}_\GG(\uu',\z));\z)=
h_\QQ(\bG^{(i)}_\GG(\y,\z)-\bG^{(i)}_\GG(\y',\z);\z)\geq\]
\[\geq h_\QQ(\bG^{(i)}_\GG(\y,\z),\bG^{(i)}_\GG(\y',\z);\z)=h_\QQ(\bG^{(i)}_\GG(\y,\z);\z)\geq \rho_i(2R_{\bG_\GG}+2r_d)\]\\
Since $h_\mathbb{Q}(f(\y,\z))\geq h_\QQ(f(\y,\z);\z)$ for any system of forms $\f$ and the total number of forms of system \eqref{sys1} is $2R_{\bG_\GG}+2r_d$, one has by Theorem A
\[\W_1= O( N^{4m+n-K-m-2D_\p-2D_{\bar{\G}_\GG}}).\]

For system \eqref{sys2} the estimates are simpler. Similarly as above it is easy to see directly from the definition that forms of this system of degree $i$ have Schmidt rank at least $\rho_i(2R_{\G_\HH}+2r_d,d)$. Indeed, suppose for $\la,\mu\in\QQ^r$ with say $\la\neq\0$
\[\la\cdot (\f_2(\z)-\f_2(\z'))+\mu\cdot (\f_2(\z)-\f_2(\vv'))=\sum_{i=1}^h U_i(\z,\z'\vv,) V_i(\z,\z'\vv,).\]
Substituting $\z'=\vv'=\0$ we have $\la\cdot \f_2(\z)=\sum_{i=1}^h U_i'(\z)\,V_i'(\z)$ and as not all terms in the sum can be zero and for each non-zero term $U_i'(\z)V_i'(\z)$ both forms have positive degrees we have that $h\geq h_\QQ(\f_2)$. The rank of the systems $\G_\HH^{(i)}$ are estimated the same way. The system \eqref{sys2} hence
\[\W_2 = O(N^{3(n-K-m)-2D_p-2D_{\G_\HH}}).\]
Putting these estimates together
\[\W\leq \sqrt{\W_1\W_2} = O(N^{2(n-K)-2D_\p-D_{\bar{\G}_\GG}-D_{\G_\HH}}).\]
This proves claim 2 and Proposition 3 follows.
\end{proof}

%%%%%%%
\bigskip

\section{The major arcs }

The treatment of the major arcs is fairly standard and differs little from the scenario with diagonal equations (see Hua \cite{Hua}). Recall that the major arcs are a union of boxes of the form
$\mathfrak{M}_{\mathbf{a},q}(C)$, where $q\leq (\log\,N)^C$ and $C$ is now a fixed constant chosen large enough so that lemma ~\ref{minor} holds with $c=1$. For a
fixed $\mathbf{a}$ and $q$,  the small size of the associated major arc means that the exponential sum\[
T_{\p}(\alpha)=\sum_{\x\in[N]^n} \La(\x) e(\p(\x)\cdot\alpha)\]
can be replaced by any approximation that has a sufficiently large logarithmic power gain in the error.

Upon the actual fixing of a $q\leq (\log\,N)^C$ and an
$\mathbf{a}\in U_q^n$, one has\footnote{There is some ambiguity in the case where $N$ is a prime power, however, there is no harm in assuming that this is not so due to the fact that the prime powers are sparse.}
\begin{eqnarray}\label{3.1}
T_{\p}(\alpha)&=&\sum_{\x\in[N]^n} \La(\x) e(\p(\x)\cdot\alpha) \nonumber\\
&=&\sum_{\mathbf{g}\in Z_q^n}\sum_{\x\in[N]^n}\mathbf{1}_{\x\equiv \mathbf{g} \, (q)}\La(\x)e(\mathbf{a}\cdot\p(\mathbf{g})/q)e(\p(\x)\cdot\tau) \\
&=&\sum_{\mathbf{g}\in Z_q^n}e(\p(\mathbf{g})\cdot\mathbf{a}/q)\int_{\mathbf{X}\in N\mathcal{J}} e(\p(\mathbf{X})\cdot\tau)d\psi_\mathbf{g}(\mathbf{X}), \nonumber
\end{eqnarray}
where the notations introduced here are $\tau_i=\alpha_i-a_i/q$, and
$\psi_\mathbf{g}(\mathbf{X})=\psi_{g_1}(X_1)...\psi_{g_n}(X_n)$ with
\[\psi_l(v)=\sum_{t\leq v,\,t\equiv l  \, (q)}\La(t),\] and $\mathcal{J}$
is the unit cube $[0,1]^n\subset\mathbb{R}^n$.

\begin{lemma}
For any given a constant $c$, the estimate
\eq\int_{\mathbf{X}\in N\mathcal{J}}
e(\p(\mathbf{X})\cdot\tau)d\psi_\mathbf{g}(\mathbf{X})=\mathbf{1}_{\mathbf{g}\in U_q^n}\phi(q)^{-n}\int_{\z\in
N\mathcal{J}}e(\p(\z)\cdot\tau)d\z+O((\log\,N)^{-c}N^{n})\ee
holds on each major arc $\mathfrak{M}_{a,q}(C)$ provided that $C$ is sufficiently large.
\end{lemma}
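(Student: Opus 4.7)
The strategy is to combine the Siegel--Walfisz theorem with iterated integration by parts in the Stieltjes sense. The left-hand side is the exponential sum
\[\sum_{\mathbf{t}\in[N]^n,\ \mathbf{t}\equiv\mathbf{g}\,(q)}\La(\mathbf{t})\,e(\p(\mathbf{t})\cdot\tau).\]
If some coordinate $g_i$ fails to lie in $U_q$, every contributing $t_i$ satisfies $(t_i,q)>1$ and must therefore be a prime power $p^k$ with $p\mid q$; thus
\[\sum_{\substack{t_i\leq N\\ (t_i,q)>1}}\La(t_i)\leq \omega(q)\log N\ll (\log N)^{C+1},\]
while each of the remaining $n-1$ coordinates contributes at most $\psi_{g_j}(N)\ll N$. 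The total $O(N^{n-1}(\log N)^{C+1})$ is absorbed into the error and accounts for the factor $\mathbf{1}_{\mathbf{g}\in U_q^n}$ on the right-hand side.

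Assume henceforth $\mathbf{g}\in U_q^n$. Siegel--Walfisz, valid uniformly for $q\leq(\log N)^C$ and $g\in U_q$, yields
\[\psi_g(v)=\frac{v}{\phi(q)}+E_g(v),\qquad |E_g(v)|\ll v\exp(-c_1\sqrt{\log v})\quad (v\geq 2),\]
together with $E_g(0)=0$. Setting $\Phi(\mathbf{X}):=e(\p(\mathbf{X})\cdot\tau)$ and writing $d\psi_{g_i}=\phi(q)^{-1}dX_i+dE_{g_i}$ in each coordinate, the factorization $d\psi_\mathbf{g}=\prod_i d\psi_{g_i}$ gives
\[\int_{N\mathcal{J}}\Phi\,d\psi_\mathbf{g}=\sum_{S\subseteq[n]}\phi(q)^{-|S^c|}\int_{N\mathcal{J}}\Phi(\mathbf{X})\prod_{i\in S}dE_{g_i}(X_i)\prod_{i\in S^c}dX_i.\]
The term $S=\emptyset$ is precisely the claimed main term, and it suffices to estimate each $S\neq\emptyset$ contribution by $O(N^n(\log N)^{-c})$.

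Fix such an $S$ and integrate by parts once in each variable $X_i$ with $i\in S$. Because $E_{g_i}(0)=0$, the one-dimensional identity
\[\int_0^N G(X_i)\,dE_{g_i}(X_i)=G(N)E_{g_i}(N)-\int_0^N \partial_{X_i}G\cdot E_{g_i}(X_i)\,dX_i\]
applied iteratively, together with $|\Phi|\leq 1$ and the derivative bound
\[|\partial_{X_i}\Phi|\ll |\tau|\cdot|\partial_{X_i}\p|\ll N^{-d}(\log N)^C\cdot N^{d-1}=N^{-1}(\log N)^C\]
inherited from the major-arc condition $|\alpha_i-a_i/q|\leq N^{-d}(\log N)^C$ (with analogous bounds $|\partial^\beta\Phi|\ll N^{-|\beta|}(\log N)^{|\beta|C}$ for higher derivatives), shows that each integration against $dE_{g_i}$ contributes a factor of $O(N\exp(-c_2\sqrt{\log N}))$ for any $c_2<c_1$. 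Together with the factor $N^{|S^c|}$ coming from the trivial integrations in the $S^c$ directions, this yields
\[\left|\int_{N\mathcal{J}}\Phi\prod_{i\in S}dE_{g_i}\prod_{i\in S^c}dX_i\right|\ll N^n(\log N)^{|S|C}\exp(-c_2|S|\sqrt{\log N})\ll N^n(\log N)^{-c}\]
for any fixed $c$, since $|S|\geq 1$.

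The only real technical issue is the multi-dimensional bookkeeping of the integration by parts, which factorizes cleanly through the product-measure structure into the one-dimensional identity above. The super-polynomial Siegel--Walfisz savings easily absorbs the polylogarithmic losses (up to $(\log N)^{nC}$) inherited from differentiating $\Phi$ on the major arcs, so no delicate oscillation argument is needed beyond the Leibniz rule.
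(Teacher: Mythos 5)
Your proof is correct, and it shares the paper's first step: writing $d\psi_{g_i}$ as the ``expected'' measure $\phi(q)^{-1}dX_i$ (restricted to $g_i\in U_q$) plus a Siegel--Walfisz error measure, expanding the product, and isolating the main term from the $2^n-1$ error terms. Where you diverge is in how each error term is estimated. You integrate by parts in the Stieltjes sense in every coordinate carrying an error measure, using $E_{g_i}(0)=0$, the pointwise bound $|E_{g_i}(v)|\ll v\exp(-c_1\sqrt{\log v})$, and the major-arc derivative bounds $|\partial^\beta\Phi|\ll N^{-|\beta|}(\log N)^{|\beta|C}$; this is classical partial summation and yields the stronger error $O\bigl(N^n(\log N)^{O(1)}\exp(-c\sqrt{\log N})\bigr)$. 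The paper instead never differentiates the phase: it isolates one distinguished variable per error term, chops its range into $(\log N)^{c'}$ intervals of length $N(\log N)^{-c'}$, freezes the phase at a point of each interval (the variation is $O((\log N)^{C-c'})$ because $|\tau|\leq N^{-d}(\log N)^C$), applies Siegel--Walfisz only at interval endpoints, and bounds the remaining measures by their total mass; this gives only a polylogarithmic saving, which is all the lemma needs. Your route buys a cleaner, sharper bound at the cost of the multidimensional integration-by-parts bookkeeping and the mixed-derivative estimates; the paper's route is cruder but avoids any smoothness considerations for $\Phi$. Two small points to tidy in your write-up: when bounding $\int_0^N|E_{g_i}(X)|\,dX$ you should treat the range of small $X$ (where $v\exp(-c_1\sqrt{\log v})$ is not directly comparable to $N\exp(-c_2\sqrt{\log N})$) by the trivial bound $|E_{g_i}(X)|\ll X$, and in the iterated integration by parts the factors multiplying the boundary terms are mixed partials of $\Phi$ evaluated with some variables set to $N$, so the higher-derivative bounds you state are indeed what is used, not just the first-order one.
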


\begin{proof}
Define for a fixed $l$ the one dimensional signed measure
$d\nu_l=d\psi_l-d\omega_l$, where $d\omega_l$ is the Lebesgue
measure multipled by the reciprocal of the totient of $q$ if
$l\in U_q$, and is zero otherwise. For a continuous function
$f$ one then has\[ \int_{0}^N f(X)d\nu_l(X)=\sum_{x\in[N], \,
x\equiv l \,(q)}\La(x)f(x)-\phi(q)^{-1}\int_0^Nf(z)dz.\]

Now set
$d|\nu_l|=d\omega_l +d\psi_l$, so that \[ \int_{\mathbf{X}\in N\mathcal{J}} e(\p(\mathbf{X})\cdot\tau)d\psi_\mathbf{g}(\mathbf{X})=\int_{\mathbf{X}\in
N\mathcal{J}}
e(\p(\mathbf{X})\cdot\tau)\prod_{i=1}^n\left(d\nu_{g_i}(X_i)+d\omega_{g_i}(X_i)\right).\]
Expanding out the product in the last integral gives the form\[
\int_{\mathbf{X}\in N\mathcal{J}}e(\p(\mathbf{X})\cdot\tau)d\omega_\mathbf{g}(\mathbf{X}) +
\sum_{i=1}^{2^n-1}\int_{\mathbf{X}\in
N\mathcal{J}}e(\p(\mathbf{X})\cdot\tau)d\mu_{i,\mathbf{g}}(\mathbf{X}),\] where $d\mu_{i,\mathbf{g}}$ runs
over all of the corresponding product measures, barring the $d\omega_\mathbf{g}(\mathbf{X})$
term.

Consider\[ \int_{\mathbf{X}\in N\mathcal{J}}e(\p(\mathbf{X})\cdot\tau)d\mu_{i,\mathbf{g}}(\mathbf{X})\] for
some fixed $i$.  Assume without loss of generality that $d\mu_{i,\y}$
is of the form \[d\nu_{g_1}(X_1)d\sigma_\mathbf{g}(X_2,...,X_n),\] where
$d\sigma_\mathbf{g}$ may be signed in some variables and is
independent of $g_1$. The range of integration for the $X_1$ variable is a copy of
the continuous interval $[0,N]$, and is to be split into smaller disjoint intervals of
size $N^{1}(\log \, N)^{-c'}$. Here $c'$ is chosen to be between
$(c+C)$ and $2(c+C)$ such that $(\log \, N)^{c'}$ is an integer, say $B$. The
equality $[0,N]=\bigcup_{j=1}^B I_j$ follows. Also set
$\mathcal{J}'_j=I_j\times[0,N]^{n-1}$, which absorbs the factor of
$N$.

Now, for a fixed interval $I_j$,  select some $t\in I_j$. Then write
\begin{eqnarray}
\int_{\mathbf{X}\in \mathcal{J}'_j}e(\p(\mathbf{X})\cdot\tau)d\mu_{i,\mathbf{g}}
&=&\int_{\mathbf{X}\in\mathcal{J}_j'}e(\p(t,X_2,...,X_n)\cdot\tau)d\nu_{g_1}(X_1)d\sigma_\mathbf{g}(X_2,...,X_n) \nonumber\\
&&
+ \int_{\mathbf{X}\in \mathcal{B_j}}(e(\p(X_1,...,X_n)\cdot\tau)-e(\p(t,X_2,...,X_n)\cdot\tau))    \nonumber\\
&& \times d\nu_{g_1}(X_1)d\sigma_\mathbf{g}(X_2,...,X_n) \nonumber\\
&:=&E_1+E_2 \nonumber
\end{eqnarray}
The first error term satisfies \[ |E_1|\leq
\int_{X_2,...,X_n\in[0,N]}|\int_{I_j}d\nu_{g_1}(X_1)|\,\,d|\sigma_\mathbf{g}|(X_2,...,X_n)=O(N^{n}e^{-c_0\sqrt{\log\,N}})\]
for some positive constant $c_0$ by the Siegel-Walfisz theorem, as $q\leq (\log\,N)^C$. To bound
$E_2$, note that on $I_j$ the integrand is
\[O(\left|\p(X_1,...,X_n)-\p(t,X_2,...,X_n))\cdot\tau\right|)=O((\log\,N)^{C-c'}).\] In turn,\[ |E_2|=
O((\log\,N)^{C-c'}))\int_{\mathbf{X}\in\mathcal{J}_j'}d|\nu_{g_1}|(X_1)d|\sigma_\mathbf{g}|(X_2,...,X_n)=
O(N^{n}(\log\,N)^{C-2c'})).\]

There are $2^n-1$ error terms on each interval, so summing over the $(\log\,N)^{c'}$ intervals completes  the proof.
\end{proof}

The integral appearing in the last result has a quick reduction, namely\[
\int_{N\mathcal{J}}e(\p(\mathbf{X})\tau)d\mathbf{X}=\int_{N\mathcal{J}}e(\f(\mathbf{X})\tau)d\mathbf{X}+O(N^{n-1+\epsilon}),\]
recalling that $\f$ is the highest degree part of $\p$. Following along with the work of Birch,
\[
\int_{N\mathcal{J}}e(\f(\mathbf{X})\tau)d\mathbf{X}=N^n\int_{\zeta\in\mathcal{J}}e(\f(\zeta)\cdot N^d\tau)d\zeta,\]
is denoted by $N^n\mathcal{I}(\mathcal{J},N^d\tau)$ in \cite{Bi}.
This function is independent of $\mathbf{a}$ and $q$. Thus the integral over
any major arc yields the common integral \[ \int_{|\tau|\leq
(\log\,N)^C} \mathcal{I}(\mathcal{J},N^{d}\tau)e(-\s\cdot\tau )d\tau.\]
With $\mu=N^{-d}\s$, set \[
J(\mu;\Phi)=\int_{|\tau|\leq\Phi}\mathcal{I}(\mathcal{J},\tau)e(-\mu\cdot\tau
)d\tau,\] and\[ J(\mu)=\lim_{\Phi\rightarrow\infty}J(\mu;\Phi).\] The
following is Lemma 5.3 in \cite{Bi}.

\begin{lemma}
The function $J(\mu)$ is continuous and uniformly bounded in $\mu$. Moreover, \[
|J(\mu)-J(\mu,\Phi)|\lesssim \Phi^{-\frac{1}{2}}\]
holds uniformly in $\mu$.
\end{lemma}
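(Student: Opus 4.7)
The plan is to reduce all three assertions to a single decay estimate on the oscillatory integral $\mathcal{I}(\mathcal{J},\tau)$. Specifically, under the Birch rank hypothesis on $\f$ already in force in Theorem 1, I will establish a bound of the shape
\[
|\mathcal{I}(\mathcal{J},\tau)|\,\lesssim\,(1+|\tau|)^{-r-\frac{1}{2}-\delta}
\]
for some fixed $\delta>0$, with implied constant depending only on $\f$, $d$, $n$, $r$. Once this is in hand, Lemma 3 follows from standard Fourier-analytic manipulations.

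Granting the decay bound, the integrand in the definition of $J(\mu,\Phi)$ is absolutely integrable on $\mathbb{R}^r$ uniformly in $\mu$, so the limit $J(\mu)=\lim_{\Phi\to\infty}J(\mu,\Phi)$ exists. Writing $J(\mu)$ as the Fourier transform of $\mathcal{I}(\mathcal{J},\cdot)\in L^1(\mathbb{R}^r)$, one immediately obtains that $J$ is continuous in $\mu$ and uniformly bounded by $\|\mathcal{I}(\mathcal{J},\cdot)\|_{L^1}$. For the rate, integrate in spherical coordinates:
\[
|J(\mu)-J(\mu,\Phi)|\,\leq\int_{|\tau|>\Phi}|\mathcal{I}(\mathcal{J},\tau)|\,d\tau\,\lesssim\,\int_{\Phi}^{\infty} s^{\,r-1}\,s^{-r-\frac{1}{2}-\delta}\,ds\,\lesssim\,\Phi^{-\frac{1}{2}-\delta},
\]
uniformly in $\mu$, which is stronger than the claimed $\Phi^{-1/2}$.

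The main obstacle is the oscillatory integral bound itself, which is the substantive content of the lemma. The idea, following Section 5 of \cite{Bi}, is to write $\tau=\lambda\omega$ with $\lambda=|\tau|$ and $\omega\in S^{r-1}$, so that
\[
\mathcal{I}(\mathcal{J},\tau)\,=\,\int_{\mathcal{J}} e\bigl(\lambda\,(\omega\cdot\f)(\zeta)\bigr)\,d\zeta.
\]
The linear combination $\omega\cdot\f$ is a scalar form of degree $d$, and the Birch rank assumption on the system $\f$ ensures, uniformly for $\omega\in S^{r-1}$, a quantitative lower bound on the codimension of the singular variety of $\omega\cdot\f$ (this is the standard transfer from systems to linear combinations). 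Applying Weyl differencing $d-1$ times inside the cube $\mathcal{J}$ and estimating the resulting trigonometric sums by Birch's geometry-of-numbers argument (Lemmas 2.1--2.4 of \cite{Bi}, already invoked in the proof of Proposition 3), one gets a bound $|\mathcal{I}(\mathcal{J},\lambda\omega)|\lesssim \lambda^{-N}$ for any prescribed $N$, provided the Birch rank of $\f$ is taken sufficiently large relative to $N$, $d$, $r$; choosing $N=r+1$ is enough. Since the estimate is uniform in $\omega$, integrating along radii gives the displayed decay, and the lemma follows. Alternatively, one may invoke Lemma 5.3 of \cite{Bi} directly, as our $\mathcal{I}$ and $J$ coincide with the corresponding objects there.
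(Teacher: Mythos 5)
Your proposal is sound, and in fact your closing alternative is precisely what the paper does: after identifying the integral with Birch's $\mathcal{I}(\mathcal{J},N^d\tau)$, the paper offers no proof at all and simply quotes Lemma 5.3 of \cite{Bi}. Your main sketch is therefore a reproof of Birch's own argument (his Lemma 5.2 giving decay of $\mathcal{I}(\mathcal{J},\tau)$, followed by the routine $L^1$/tail estimates you carry out), and the Fourier-analytic deduction of continuity, boundedness and the $\Phi^{-1/2}$ rate from a decay exponent exceeding $r+\tfrac12$ is correct; under the rank hypothesis $\B_d(\p)\geq\chi(r,d)$ in force here the needed exponent is comfortably available. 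Two points you gloss over deserve mention. First, the reduction to the single form $\omega\cdot\f$ requires the observation that any $\x$ with $\nabla(\omega\cdot\f)(\x)=0$ exhibits a linear dependence among the rows of $Jac_\f(\x)$, so $V^*_{\omega\cdot\f}\subseteq V^*_\f$ and the codimension bound is uniform in $\omega\in S^{r-1}$; this is true but should be stated, and it is not how Birch proceeds (he works with the system and the full vector $\tau$ directly). Second, Lemmas 2.1--2.4 of \cite{Bi} are formulated for exponential sums over integer points, so "applying Weyl differencing inside the cube $\mathcal{J}$" is not a verbatim application: one must either redo the differencing for the continuous integral, replacing lattice-point counts by measures of sublevel sets of the multilinear forms, or (as Birch does in his Section 5) transfer the sum estimates to $\mathcal{I}$ by a scaling/comparison argument. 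With either of these repairs your self-contained route works and buys an explicit decay rate for $\mathcal{I}$; the paper's citation buys brevity at the cost of leaving the quantitative input implicit.
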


By defining\[
W_{\mathbf{a},q}=\sum_{\mathbf{g}\in U_q^n}e(\p(\mathbf{g})\cdot\mathbf{a}/q),\]
one then has

\begin{lemma}
For any given $c>0$, the estimate\[
\int_{\mathfrak{M}_{\mathbf{a},q}(C)}T_{\p}(\alpha)e(-\s\cdot\alpha)d\alpha=N^{n-dr}\phi(q)^{-n}W_{\mathbf{a},q}e(-\s\cdot\mathbf{a}/q)J(\mu) + O(N^{n-dr}(\log\,N)^{-c}),\]
where $\mu=N^{-2}\s$, holds on each major arc $\mathfrak{M}_{\mathbf{a},q}(C)$.
\end{lemma}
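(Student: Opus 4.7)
The plan is to assemble the expression piece by piece, starting from the identity \eqref{3.1} already displayed in the excerpt. Insert the approximation of the previous lemma into \eqref{3.1}: the inner integral $\int_{N\mathcal{J}} e(\p(\mathbf{X})\cdot\tau)\,d\psi_{\mathbf{g}}(\mathbf{X})$ is replaced by $\mathbf{1}_{\mathbf{g}\in U_q^n}\phi(q)^{-n}\int_{N\mathcal{J}}e(\p(\z)\cdot\tau)\,d\z$ with error $O((\log N)^{-c'}N^n)$ uniformly in $\mathbf{g}$. Since the outer sum over $\mathbf{g}\in Z_q^n$ has at most $q^n\leq(\log N)^{Cn}$ terms, this yields
\[
T_{\p}(\alpha)=\phi(q)^{-n}W_{\mathbf{a},q}\,e(\p(\cdot)\text{ phase absorbed})\cdot \int_{N\mathcal{J}}e(\p(\z)\cdot\tau)\,d\z+O((\log N)^{-c''}N^n),
\]
with $c''$ as large as desired at the cost of enlarging $c'$ and hence $C$.

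Next I would integrate against $e(-\s\cdot\alpha)$ over the box $\mathfrak{M}_{\mathbf{a},q}(C)$. Writing $\alpha=\mathbf{a}/q+\tau$ turns the exponential into $e(-\s\cdot\mathbf{a}/q)\,e(-\s\cdot\tau)$, and the box has $r$-dimensional Lebesgue measure $(2N^{-d}(\log N)^C)^r$. Thus the error term contributes $O(N^{n-dr}(\log N)^{Cr-c''})$, which is $O(N^{n-dr}(\log N)^{-c})$ once $c''$ is chosen $\geq c+Cr$. The main term is
\[
\phi(q)^{-n}W_{\mathbf{a},q}\,e(-\s\cdot\mathbf{a}/q)\int_{|\tau_i|\leq N^{-d}(\log N)^C}\!\!\!e(-\s\cdot\tau)\int_{N\mathcal{J}}e(\p(\z)\cdot\tau)\,d\z\,d\tau.
\]

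Now I invoke the two reductions already noted in the excerpt. First, replacing $\p$ by its leading homogeneous part $\f$ inside the inner integral introduces an error of size $O(N^{n-1+\epsilon})$, which after integration against $d\tau$ over the box gives $O(N^{n-1-dr+\epsilon}(\log N)^{Cr})$, absorbed into the error term. Second, the substitution $\mathbf{X}=N\zeta$ produces $N^n\,\mathcal{I}(\mathcal{J},N^d\tau)$, and the further substitution $\eta=N^d\tau$ converts the inner integration to
\[
N^{n-dr}\int_{|\eta_i|\leq(\log N)^C}\mathcal{I}(\mathcal{J},\eta)\,e(-\mu\cdot\eta)\,d\eta=N^{n-dr}J(\mu;(\log N)^C),
\]
where $\mu=N^{-d}\s$.

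Finally, I apply the preceding lemma (the analog of Birch's Lemma 5.3) with $\Phi=(\log N)^C$: $|J(\mu)-J(\mu;(\log N)^C)|\lesssim(\log N)^{-C/2}$, which is $O((\log N)^{-c})$ provided $C\geq 2c$. Combining with the boundedness of $\phi(q)^{-n}W_{\mathbf{a},q}$ (trivially $|W_{\mathbf{a},q}|\leq\phi(q)^n$) finishes the estimate. The only step requiring real care, and hence the main obstacle, is keeping careful track of the multiplicative losses $q^n$, $(\log N)^{Cr}$, and $\Phi^{-1/2}$ so that a single choice of $C$ (depending on $c$, $r$, $d$, $n$) makes all error terms simultaneously better than $(\log N)^{-c}$; everything else is a bookkeeping exercise in changes of variable.
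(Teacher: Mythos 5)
Your proposal is correct and follows essentially the same route as the paper: insert the approximation of the preceding measure lemma into the expansion of $T_{\p}(\alpha)$, sum over $\mathbf{g}$ to produce $\phi(q)^{-n}W_{\mathbf{a},q}$, integrate over the small box, pass from $\p$ to its leading form $\f$ and rescale to get $N^{n-dr}J(\mu;(\log N)^C)$, and finish with the Birch-type bound $|J(\mu)-J(\mu;\Phi)|\lesssim\Phi^{-1/2}$, keeping the $q^n$ and $(\log N)^{Cr}$ losses under control by taking the auxiliary exponents large relative to $c$ and $C$ (and noting that the stated $\mu=N^{-2}\s$ is a typo for $\mu=N^{-d}\s$, as you correctly use).
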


The measure of the major arcs is easily at most $N^{-dr}(\log\,N)^{K}$ for some constant $K$. By defining
\medskip
\begin{equation}
\begin{array}{l}
\displaystyle B(\s,q)=\sum_{\mathbf{a}\in U_q^r}\phi(q)^{-n}W_{\mathbf{a},q}e(-\s\cdot\mathbf{a}/q)\\
\displaystyle \mathfrak{S}(\s,N)=\sum_{q\leq
(\log\,N)^C}B(\s,q), \nonumber\\
\end{array}
\end{equation}
it then follows that

\begin{lemma}
The estimate
\eq\label{asyweak}
\mathcal{M}_{\p,\s}(N)=\mathfrak{S}(\s,N)J(\mu)N^{n-dr}+O((\log\,N)^{-c}N^{n-dr})
\ee
holds for any chosen value of $c$.
\end{lemma}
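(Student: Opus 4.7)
The plan is to split the integral representation
\[
\mathcal{M}_{\p,\s}(N)=\int_{[0,1]^r} T_{\p}(\alpha)\,e(-\s\cdot\alpha)\,d\alpha
\]
into the major arc piece $\int_{\mathfrak{M}(C)}$ and the minor arc piece $\int_{\mathfrak{m}(C)}$, bound the latter by Proposition 3, and assemble the former from the previous lemma applied on each component box $\mathfrak{M}_{\mathbf{a},q}(C)$.

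Concretely, I would first fix the target exponent $c$ in the statement. Then I would choose the major arc parameter $C$ large enough that Proposition 3 produces
\[
\int_{\mathfrak{m}(C)} T_{\p}(\alpha)\,e(-\s\cdot\alpha)\,d\alpha = O\!\left(N^{n-D}(\log N)^{-c}\right),
\]
noting $D=dr$. For the major arcs, the boxes $\mathfrak{M}_{\mathbf{a},q}(C)$ are disjoint as $q$ ranges over $1\le q\le (\log N)^C$ and $\mathbf{a}\in U_q^r$, so I sum the asymptotic of the previous lemma over all of them. The main terms collapse exactly to
\[
N^{n-dr} J(\mu)\sum_{q\le (\log N)^C}\sum_{\mathbf{a}\in U_q^r}\phi(q)^{-n}W_{\mathbf{a},q}\,e(-\s\cdot\mathbf{a}/q) = N^{n-dr} J(\mu)\,\mathfrak{S}(\s,N),
\]
by the definitions of $B(\s,q)$ and $\mathfrak{S}(\s,N)$.

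The only real bookkeeping concerns the accumulated error. The number of major arcs is
\[
\#\{(\mathbf{a},q): q\le (\log N)^C,\ \mathbf{a}\in U_q^r\} \ \le\ \sum_{q\le (\log N)^C}q^r \ \ll \ (\log N)^{C(r+1)},
\]
so the total major-arc error is $O(N^{n-dr}(\log N)^{C(r+1)-c_0})$ if the previous lemma is invoked with parameter $c_0$. By choosing $c_0 \ge c + C(r+1)$ from the outset (which is permitted since both lemmas allow any fixed parameter), this combines with the minor arc bound to yield the stated estimate.

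The main obstacle is not any deep estimate but rather ordering the parameters correctly: one must first fix the desired $c$, then pick $C$ from Proposition 3 so the minor arcs behave, and finally apply the major arc asymptotic with a parameter exceeding $c+C(r+1)$ to absorb the number of arcs. With that ordering, Lemma 7 (already used inside the previous lemma to pass from the truncated $J(\mu;\Phi)$ to $J(\mu)$ at the cost of $\Phi^{-1/2}$) needs no further invocation, since the singular series in the statement is itself only the truncation up to $q\le (\log N)^C$, matching precisely what summing the major arcs produces.
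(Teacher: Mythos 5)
Your proposal is correct and is essentially the paper's own (largely implicit) argument: split off the minor arcs via Proposition 3, sum the per-arc asymptotic of the preceding lemma over the $O((\log N)^{C(r+1)})$ disjoint boxes so the main terms assemble into $\mathfrak{S}(\s,N)J(\mu)N^{n-dr}$ by the definitions of $B(\s,q)$ and $\mathfrak{S}(\s,N)$, and absorb the accumulated error by taking the free logarithmic exponent in the major-arc lemma large compared with $c$ plus the number of arcs. The parameter ordering you spell out (fix $c$, choose $C$ for the minor arcs, then invoke the major-arc estimate with exponent exceeding $c+C(r+1)$) is exactly the bookkeeping the paper leaves to the reader.
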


%%%%%%%
\section{The singular series}

Following the outline  of Hua (\cite{Hua}, chapter VIII, \S2, Lemma 8.1), one can show that  $B(\s,q)$ is multiplicative as a function of $q$. This leads to consideration of the formal identity
\eq\label{product}\mathfrak{S}(\s):=\lim_{N\rightarrow \infty}\mathfrak{S}(\s,N)=\prod_{p<\infty}(1+\sum_{t=1}^\infty B(\s,p^t))).\ee

\begin{lemma}\label{bbound}

If $q=p^t$ is a prime power, then
\eq
B(\s,q)=O(q^{r-\B(\p)/((d-1)2^dr)+\epsilon})
\ee
holds uniformly in $\s$ as $t\rightarrow\infty$. The implied constants can be made independent of $p$.
\end{lemma}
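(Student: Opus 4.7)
The plan is to reduce the bound on $B(\s,q)$ to a uniform Birch-type pointwise estimate on $W_{\mathbf{a},q}$ and then perform standard Weyl differencing on complete exponential sums. Trivial summation over $\mathbf{a}$ gives
\[
|B(\s,q)|\le \phi(q)^{-n}\,\phi(q)^{r}\,\max_{\mathbf{a}\in U_q^r}|W_{\mathbf{a},q}|\ll q^{r-n}\max_{\mathbf{a}\in U_q^r}|W_{\mathbf{a},q}|,
\]
with an implied constant depending only on $n-r$ (hence independent of $p$, since $(1-1/p)^{r-n}\le 2^{n-r}$). So it suffices to prove the pointwise estimate $|W_{\mathbf{a},q}|\ll q^{n-\B(\p)/((d-1)2^d r)+\eps}$ uniformly for $\mathbf{a}\in U_q^r$, with implied constants independent of $p$ and $\s$.

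First I would strip the unit condition on $\mathbf{g}$ using the identity $\mathbf{1}_{(g_i,q)=1}=1-\mathbf{1}_{p\mid g_i}$, valid because $q=p^t$. This writes $W_{\mathbf{a},q}$ as a signed sum of $2^n$ complete exponential sums
\[
S_I(\mathbf{a},q)=\sum_{\substack{\mathbf{g}\in Z_q^n\\ p\mid g_i,\,i\in I}} e(\mathbf{a}\cdot\p(\mathbf{g})/q),\qquad I\subseteq [n].
\]
After the substitution $g_i=ph_i$ for $i\in I$, each $S_I$ becomes a complete sum for a polynomial whose degree-$d$ homogeneous part has the same Birch rank $\B(\p)$, since a diagonal scaling is an invertible linear change of variable over $\mathbb{Q}$. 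It then suffices to prove the complete-sum estimate $|S(\mathbf{a},q)|\ll q^{n-\B(\p)/((d-1)2^{d-1}r)+\eps}$ for $S(\mathbf{a},q)=\sum_{\mathbf{g}\in Z_q^n}e(\mathbf{a}\cdot\p(\mathbf{g})/q)$ and $\mathbf{a}\in U_q^r$; the factor-of-two gap between the two exponents together with the $2^n$ inclusion-exclusion loss is absorbed easily.

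The core estimate is standard Birch--Weyl differencing: applying Cauchy--Schwarz $d-1$ times bounds $|S(\mathbf{a},q)|^{2^{d-1}}$ by $q^{(2^{d-1}-1)n}$ times $q^n$ times the number of tuples $(\mathbf{h}_1,\ldots,\mathbf{h}_{d-1})\in (Z_q^n)^{d-1}$ such that the symmetric $d$-linear form $\Phi_{\f}$ associated to the leading part $\f$ of $\p$ satisfies $\mathbf{a}\cdot\Phi_{\f}(\mathbf{h}_1,\ldots,\mathbf{h}_{d-1},\mathbf{x})\equiv 0 \pmod{q}$ identically in $\mathbf{x}$. Since every component of $\mathbf{a}$ is coprime to $q$, the combination $\mathbf{a}\cdot\f$ is a nonzero rational form whose Birch rank is at least $\B(\p)/r$ (as the dropping of the Jacobian rank of $\mathbf{a}\cdot\f$ is inherited from the rank drop of the full Jacobian of $\f$). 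A $p$-adic analogue of Birch's geometric lemma (\cite{Bi}, Lemmas 3.1--3.3) then bounds this count by $q^{(d-1)n-\B(\p)/r+\eps}$, and taking the $2^{d-1}$-th root yields the claimed pointwise bound on $|S(\mathbf{a},q)|$. The main obstacle is to obtain the lattice-point count uniformly in $p$: the standard proof expands $\p(\mathbf{g}+\mathbf{h})$ by Taylor's formula and uses invertibility of $d!$ modulo $q$, which fails for $p\le d$; for those small primes one replaces ordinary differences by divided differences, at the cost of a constant factor in the exponent that is absorbed by the weakened rate $\B(\p)/((d-1)2^d r)$ in the statement.
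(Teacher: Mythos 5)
Your reduction of $B(\s,q)$ to a pointwise bound on $W_{\mathbf{a},q}$ matches the paper, but the next step has a genuine gap. After writing $\mathbf{1}_{U_q}(g_i)=1-\mathbf{1}_{p\mid g_i}$ and substituting $g_i=ph_i$ for $i\in I$, the resulting sum is not one to which a Birch-type complete-sum estimate at denominator $q$ applies. The scaling $g_i\mapsto ph_i$ is indeed invertible over $\QQ$, so it preserves the Birch rank of the top-degree part as a rational form, but it is not invertible modulo $q=p^t$: every monomial involving a variable from $I$ acquires a power of $p$, which lowers the effective denominator of the phase. Concretely, if $\p=\f$ is homogeneous of degree $d$, $I=[n]$ and $t\leq d$, then $\mathbf{a}\cdot\f(p\h)/p^t$ is an integer, the substituted sum exhibits no cancellation at all, and the claimed bound $q^{n-\B(\p)/((d-1)2^{d-1}r)+\eps}$ for the corresponding complete sum is simply false; the only saving available there is the shortened range $h_i\in Z_{p^{t-1}}$ (a factor $p^{-|I|}$), which your argument never uses. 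So the assertion that "each $S_I$ becomes a complete sum for a polynomial whose degree-$d$ homogeneous part has the same Birch rank, hence the complete-sum estimate applies" is exactly the point that needs a proof and does not get one, and it cannot be fixed by adjusting the exponent: for intermediate $|I|$ and large $t$, neither rank invariance over $\QQ$ nor the trivial range saving alone gives the stated bound.

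The paper handles this with a case split for which your plan has no counterpart. For $t\leq d$ it dichotomizes over $|I|$: if $|I|>\B(\p)/(r+1)$, the trivial estimate $p^{(t-1)|I|}(p^t)^{n-|I|}=q^{n-|I|/t}$ already suffices (this is where the range saving enters), while for small $|I|$ the variables $g_i$, $i\in I$, are frozen, Corollary \ref{subspacerank} controls the rank drop of the restriction of $\p$ to the affine subspace $g_i=ph_i$ (at most $|I|(r+1)$), and Birch's Lemma 5.4 is applied in the remaining variables at full denominator $q$. For $t>d$ a different decomposition $\mathbf{g}+p\h$ with $\mathbf{g}\in U_p^n$, $\h\in Z_{p^{t-1}}^n$ is used: the top part becomes $p^d\p(\h)$, whose effective denominator $p^{t-d}$ is still large, and one verifies that the Diophantine alternative in Birch's Lemma 4.3 cannot hold for $\theta<1/(d-1)r$, giving the saving in $P=p^{t-1}$ uniformly in the degree-$(d-1)$ perturbation $f_{\mathbf{g}}$. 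Your closing remark about $p\leq d$ and divided differences addresses a non-issue (no division by $d!$ is needed on the paper's route; uniformity in $p$ comes from Birch's lemmas and $q^n/\phi(q)^n\leq 2^n$), while the real uniformity problem created by your substitution --- the $p$-power content of the scaled coefficients --- is left untreated.
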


\begin{proof}
It is shown here that \[
W_{\mathbf{a},q}=O(q^{n-\B(\p)/((d-1)2^dr)+\epsilon}),\]
uniformly for $\mathbf{a}\in U_q^n$, which clearly implies the result by the definition of $B(\s,q)$ and the fact that $q^n/\phi(q)^n\leq 2^n$ independent of $p$.

The  inclusion-exclusion principle is used to bound $W_{\mathbf{a},q}$ when $q=p^t$ when $t\leq d$. Let such a $t$ be fixed, and note that the characteristic function of $U_{p^t}$ decomposes as
\[
\mathbf{1}_{U_{p^t}}(x)=1-\sum_{h\in{Z_{p^{t-1}}}}\mathbf{1}_{x=hp}.\]
Applying this in the definition gives
\begin{eqnarray}
W_{\mathbf{a},q}&=&\sum_{\mathbf{g}\in U_q^n}e(\p(\mathbf{g})\cdot\mathbf{a}/q) \nonumber\\
&=&\sum_{\mathbf{g}\in Z_q^n}\prod_{i=1}^n(1-\sum_{h_i\in{Z_{p^{t-1}}}}\mathbf{1}_{g_i=h_ip})e(\p(\mathbf{g})\cdot\mathbf{a}/q) \nonumber\\
&=&\sum_{I\subseteq[n]}(-1)^{|I|}\sum_{\mathbf{h}\in Z_{p^{t-1}}^{|I|}}\sum_{\mathbf{g}\in Z_q^n}F_I(\mathbf{g};\mathbf{h})e(\p(\mathbf{g})\cdot\mathbf{a}/q),
\end{eqnarray}
where \[
F_I(\mathbf{g};\mathbf{h})=\prod_{i\in I}\mathbf{1}_{g_i=ph_i}\]
for $\mathbf{h}\in Z_p^{|I|}$. In other words, $F_I$ is the characteristic function of the set $H_{I,\mathbf{h}}=\{\mathbf{g}:g_i=ph_i \, \forall \, i\in I\}$.

The sets $I\subseteq[n]$ divided into two categories according to whether   $|I|\leq\B(\p)/(r+1)$ or not. If $I$ is a set fitting into the latter category, then the trivial estimate is\[
\left|\sum_{\mathbf{h}\in Z_{p^{t-1}}^{|I|}}\sum_{\mathbf{g}\in Z_q^n}F_I(\mathbf{g};\mathbf{h})e(\p(\mathbf{g})\cdot\mathbf{a}/q)\right|=p^{(t-1)|I|}(p^t)^{n-|I|}=(p^t)^{n-|I|/t}\leq q^{n-\B(\p)/(tr+t)}\leq q^{n-\B(\p)/((d-1)2^dr)}.\]

Now let  $I$ be a fixed subset of $[n]$ with $|I|\leq\B(\p)/(r+1)$.  For each $\mathbf{h}$ the restriction of $\p$ to the set $H_{I,\mathbf{h}}$ has Birch rank at least $\B(\p)-|I|(r+1)$ by corollary ~\ref{subspacerank}. By the work of Birch (\cite{Bi}, Lemma 5.4) it follows that
\begin{eqnarray}
\sum_{\mathbf{h}\in Z_p^{(t-1)|I|}}\left|\sum_{\mathbf{g}\in Z_q^n}F_I(\mathbf{g};\mathbf{h})e(\p(\mathbf{g})\cdot\mathbf{a}/q)\right|
&\<& q^{(t-1)|I|/t}q^{n-|I|-(\B(\p)-|I|(r+1))/((d-1)2^{d-1}r)+\epsilon} \nonumber\\
&\<&q^{n-\B(\p)/((d-1)2^{d}r)+\epsilon}.\nonumber
\end{eqnarray}
Summing over all $I$ yields the bound.

Now let $q=p^t$ for $t>d$. Going back to the definition gives
\begin{eqnarray}
W_{\mathbf{a},q}&=& \sum_{\mathbf{g}\in U_q^n}e(\p(\mathbf{g})\cdot\mathbf{a}/q) \nonumber\\
&=& \sum_{\mathbf{g}\in U_p^n}\sum_{\mathbf{h}\in Z_{p^{t-1}}^n}e(\p(\mathbf{g}+p\mathbf{h})\cdot\mathbf{a}/q)\nonumber.
\end{eqnarray}
The system of forms in the exponent can be expanded for each fixed $\mathbf{g}$ as \[\p(\mathbf{g}+p\mathbf{h})=p^d\p(\mathbf{h})+f_\mathbf{g}(\mathbf{h})\] for some polynomial $f_\mathbf{g}$ of degree at most $d-1$.  Then it follows that \eq\label{Wbound}
|W_{\mathbf{a},q}|\leq \sum_{\mathbf{g}\in U_p^n}\left|\sum_{\mathbf{h}\in Z_{p^{t-1}}^n}e(f_\mathbf{g}(\mathbf{h})+p^d\p(\mathbf{h}))\cdot\mathbf{a}/q)\right|.\ee
The inner sum is now bounded uniformly in $\mathbf{g}$ by an application of the exponential sum estimates in \cite{Bi} as follows.

Set $P=p^{t-1}$ and $q_1=p^{t-d}$. Then, for each $i=1,...,r$, \[
2|q'a_i -a_i'q_1|\leq P^{-(d-1) + (d-1)r\theta}\]
and\[
1\leq q'\leq P^{(d-1)r\theta}\]
cannot be satisfied if ${\theta}<1/(d-1)r$. Then, by Lemma 4.3 of \cite{Bi},\[
\sum_{\mathbf{h}\in Z_{p^{t-1}}^n}e((p^d\p(\mathbf{h}))\cdot\mathbf{a}/q+f(\h))=O(P^{n-\B(\p)/((d-1)2^dr)+\epsilon})\] for any polynomial $f(\h)$ of degree strictly less than $d$. In turn,\[
|W_{\mathbf{a},q}|\leq \sum_{\mathbf{g}\in U_p^n}O(P^{n-B(\p)/((d-1)2^dr)+\epsilon})=O(q^{n-B(\p)/((d-1)2^dr)+\epsilon}),\]
which is what is needed to complete the proof in this last and final case.
\end{proof}

Now define the local factor for a finite prime $p$ as
\eq
\mu_p=1+\sum_{t=1}^\infty B(\s,p^t)),
\ee
which is well defined as the series is absolutely convergent provided that $\B(\p)> (d-1)2^dr(r+1)$. The following result is again an straight forward extension of the results for a single form.
\begin{lemma} For each finite prime $p$, the local factor may be represented as
\eq\label{mu}
\mu_p=\lim_{t\rightarrow \infty}\frac{(p^t)^RM(p^t)}{\phi^n(p^t)},
\ee
where $M(p^t)$ represents the number of solutions to equation ~\ref{1} in the multiplicative group $U_{p^t}$.
 \end{lemma}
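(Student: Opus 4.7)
The plan is to establish, for every finite $T$, the exact identity
$$1 + \sum_{t=1}^T B(\s, p^t) \;=\; \frac{(p^T)^r M(p^T)}{\phi(p^T)^n},$$
and then let $T \to \infty$. By Lemma~\ref{bbound} and the rank hypothesis $\B_d(\p) > (d-1)2^d r(r+1)$, the series on the left converges absolutely to $\mu_p$, so both sides necessarily share the same limit. The approach mirrors the classical derivation of Hua \cite{Hua} for a single form; the only genuinely new ingredient is the $r$-dimensional Farey-type partition described below.

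To prove the finite-$T$ identity I would expand each $B(\s, p^t)$ via its definition and substitute the formula for $W_{\mathbf{a}, p^t}$, producing a double sum over $\mathbf{g} \in U_{p^t}^n$ and over representatives $\mathbf{a}$ with common denominator $p^t$. Then, for each $t \le T$, I would lift the $\mathbf{g}$-summation from $U_{p^t}^n$ to $U_{p^T}^n$: since the summand depends on $\mathbf{g}$ only modulo $p^t$, and the reduction map $U_{p^T}^n \twoheadrightarrow U_{p^t}^n$ is surjective with each fiber of size $\phi(p^T)^n/\phi(p^t)^n$, this lift merely replaces $\phi(p^t)^{-n}$ by $\phi(p^T)^{-n}$ while enlarging the summation domain in a way uniform in $t$. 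After interchanging the orders of summation, the problem reduces to evaluating, for fixed $\mathbf{g} \in U_{p^T}^n$, the inner sum
$$\sum_{t=0}^T \, \sum_{\substack{\mathbf{a}\text{ primitive}\\ \text{mod } p^t}} e\bigl((\p(\mathbf{g}) - \s)\cdot \mathbf{a}/p^t\bigr).$$

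The combinatorial heart of the argument is a Farey-type bijection: each $\mathbf{a}' \in Z_{p^T}^r$ admits a unique representation $\mathbf{a}'/p^T \equiv \mathbf{a}/p^t \pmod{\Z^r}$ with $0 \le t \le T$ and $\mathbf{a}$ primitive modulo $p^t$ in the sense used by $B(\s,p^t)$. This bijection collapses the double inner sum into the single sum $\sum_{\mathbf{a}' \in Z_{p^T}^r} e((\p(\mathbf{g}) - \s)\cdot \mathbf{a}'/p^T)$, which by orthogonality of additive characters mod $p^T$ equals $(p^T)^r$ when $\p(\mathbf{g}) \equiv \s \pmod{p^T}$ and vanishes otherwise. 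Summing over $\mathbf{g} \in U_{p^T}^n$ produces exactly $(p^T)^r M(p^T)/\phi(p^T)^n$, and sending $T \to \infty$ closes the argument. I expect the main technical point to be setting up the Farey-type parametrization together with the lift of variables: verifying that every element of $Z_{p^T}^r$ is counted exactly once and that the ``primitive mod $p^t$'' convention matches the one appearing in the definition of $B(\s,p^t)$; once that compatibility is pinned down, the remaining steps are routine orthogonality and bookkeeping, with convergence controlled by Lemma~\ref{bbound}.
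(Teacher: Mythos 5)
Your proposal is correct and is essentially the argument the paper has in mind: the paper gives no details for this lemma, calling it a straightforward extension of Hua's single-form computation, and your exact finite-$T$ identity $1+\sum_{t\leq T}B(\s,p^t)=(p^T)^r M(p^T)/\phi(p^T)^n$ (via lifting $\mathbf{g}$ to $U_{p^T}^n$, the Farey-type parametrization, and orthogonality), followed by letting $T\to\infty$ with absolute convergence supplied by the bound on $B(\s,p^t)$, is exactly that standard extension. The one point to pin down, which you already flag, is that the summation $\mathbf{a}\in U_q^r$ in the definition of $B(\s,q)$ must be read as primitive $r$-tuples modulo $q$ (i.e.\ $p\nmid\gcd(a_1,\dots,a_r)$) rather than every coordinate being a unit, since only under that convention does the collapse to the complete sum over $Z_{p^T}^r$, and hence the stated limit formula, go through.
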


At our disposal now is the fact that the $\mu_p$ are positive, which then easily gives the following.

\begin{lemma}
If $\B(\p)>(d-1)2^dr(r+1)$ then the local factor for each finite prime satisfies the estimate \[
\mu_p=1+O(p^{-(1+\delta}))\]
for some positive $\delta$, and therefore the product in equation ~\ref{product} is absolutely convergent and thusly is in fact well defined.
\end{lemma}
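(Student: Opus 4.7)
The plan is to combine the bound from Lemma \ref{bbound} with a geometric summation in $t$. Write
\[
\mu_p - 1 = \sum_{t=1}^\infty B(\s,p^t),
\]
and set $\eta := \B(\p)/((d-1)2^d r) - r$. The hypothesis $\B(\p)>(d-1)2^d r(r+1)$ is exactly the statement that $\eta > 1$, so I can write $\eta = 1+\delta_0$ with $\delta_0 > 0$.

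By Lemma \ref{bbound}, for every prime $p$ and every $t\geq 1$ one has
\[
|B(\s,p^t)| \< p^{t(r-\B(\p)/((d-1)2^d r) + \epsilon)} = p^{-t(1+\delta_0-\epsilon)},
\]
with an implied constant independent of $p$, $t$ and $\s$. Fixing any $\epsilon\in(0,\delta_0)$ and summing the resulting geometric series in $t$ gives
\[
|\mu_p - 1| \leq \sum_{t=1}^\infty |B(\s,p^t)| \< \sum_{t=1}^\infty p^{-t(1+\delta)} \< p^{-(1+\delta)},
\]
with $\delta := \delta_0 - \epsilon > 0$. This is the stated pointwise bound.

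For the second assertion, this pointwise estimate yields
\[
\sum_{p} |\mu_p - 1| \< \sum_{p} p^{-(1+\delta)} < \infty,
\]
so the infinite product $\prod_p \mu_p = \prod_p (1 + (\mu_p - 1))$ is absolutely convergent and hence defines a finite limit, i.e.\ the singular series $\mathfrak{S}(\s) = \lim_{N\to\infty} \mathfrak{S}(\s,N)$ appearing in \eqref{product} is well defined. I do not expect any serious obstacle here; the only point requiring mild care is verifying that the constants in Lemma \ref{bbound} are genuinely independent of $p$ (which is asserted there), so that the geometric summation can be done uniformly and the resulting bound on $|\mu_p-1|$ is summable over primes.
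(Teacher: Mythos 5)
Your proposal is correct and is exactly the argument the paper intends: the hypothesis $\B(\p)>(d-1)2^dr(r+1)$ makes the exponent in Lemma \ref{bbound} strictly less than $-(1+\delta)$ for a fixed small $\epsilon$, the geometric series in $t$ is summed uniformly in $p$ (the uniformity being supplied by Lemma \ref{bbound} itself), and summability of $p^{-(1+\delta)}$ over primes gives absolute convergence of the product. This matches the paper's (essentially unwritten) proof, so there is nothing further to add.
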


The observation that \[
|\mathfrak{S}(\s,N)-\mathfrak{S}(\s)|=o(1)\]
gives the final form of the asymptotic
\[
\mathcal{M}_{\p,\s}(N)=\mathfrak{S}(\s)J(\mu)N^{n-dr}+O((\log\,N)^{-c}N^{n-dr}).
\]
This proves our main result, Theorem 1. The non-vanishing of local factors follows from the exact same lifting argument as when dealing with integer solutions \cite{Bi}.

\section{Further remarks.}

There are further possible refinements of the main result of this paper. It is expected, similarly to the case of integer solutions \cite{Sch}, that Theorem 1 holds assuming only the largeness of the rational Schmidt rank of the system. To prove this one needs to find a suitable analogue of Proposition 2 for the rational Schmidt rank instead of the Birch rank. The tower type bounds on the ranks are due to the regularization process expressed in Proposition 1. It is expected that exponential type lower bounds on the rank of the system are sufficient. It might be possible that there are "transfer principles" for higher degree forms, to allow a more direct transition to find solutions in the primes.

The methods of this paper may extend to other special sequences not just the primes. For example for a translation invariant system of forms one might modify our arguments to show the existence of solutions chosen from a set $A$ of upper positive density. Indeed, assuming that $A$ is sufficiently uniform, one expects that an analogue of Lemma 5 and hence our main estimate Proposition 3 holds, with the minor arcs replaced by $[0,1]^r$. Otherwise the balanced function of  $A$ should correlate with a polynomial phase function which naturally leads to a standard density increment argument. Note that the existence of solutions in the set $A$ already follows from known results, Szemer\'{e}di's theorem \cite{GT2} together with the main result of Birch \cite{Bi}, providing though weaker quantitative bounds on the density.
We do not pursue these matters here.

\bigskip

\end{document}